\newtheorem{thm}{}[section]
\newtheorem{theorem}[thm]{Theorem}
\newtheorem{corollary}[thm]{Corollary}
\newtheorem{lemma}[thm]{Lemma}
\newtheorem{proposition}[thm]{Proposition}
\theoremstyle{definition}
\newtheorem{definition}[thm]{Definition}
\theoremstyle{remark}
\newtheorem{remark}[thm]{Remark}
\newtheorem{example}{Example}
\numberwithin{equation}{section}
\DeclareMathOperator*{\essinf}{ess\,inf}
\DeclareMathOperator{\supp}{supp}
\DeclareMathOperator{\Ker}{Ker}
\DeclareMathOperator*{\Ave}{Ave}
\newcommand{\floor}[1]{\left\lfloor#1\right\rfloor}
\newcommand{\ceil}[1]{\left\lceil#1\right\rceil}
\newcommand{\abs}[1]{\left\lvert#1\right\rvert}
\newcommand{\norm}[1]{\left\lVert#1\right\rVert}
\newcommand{\At}{\ensuremath{{\mathcal{A}}}}
\newcommand{\Cont}{\ensuremath{{\mathcal{C}}}}
\newcommand{\Nt}{\ensuremath{{\mathcal{N}}}}
\newcommand{\MF}{\ensuremath{{\bm{F}}}}
\newcommand{\MG}{\ensuremath{{\bm{G}}}}
\newcommand{\wstartop}{\ensuremath{w^*\mbox{--}}}
\newcommand{\YY}{\ensuremath{\mathbb{Y}}}
\newcommand{\FF}{\ensuremath{\mathbb{F}}}
\newcommand{\UU}{\ensuremath{\mathbb{U}}}
\newcommand{\XX}{\ensuremath{\mathbb{X}}}
\newcommand{\NN}{\ensuremath{\mathbb{N}}}
\newcommand{\LL}{\ensuremath{{\bm{L}}}}
\newcommand{\bphi}{\ensuremath{\bm{\phi}}}
\newcommand{\bpsi}{\ensuremath{\bm{\psi}}}
\newcommand{\ww}{\ensuremath{\bm{w}}}
\newcommand{\yy}{\ensuremath{\bm{y}}}
\newcommand{\xx}{\ensuremath{\bm{x}}}
\newcommand{\zz}{\ensuremath{\bm{z}}}
\newcommand{\ZB}{\ensuremath{\mathcal{Z}}}
\newcommand{\YB}{\ensuremath{\mathcal{Y}}}
\newcommand{\XB}{\ensuremath{\mathcal{X}}}
\newcommand{\SL}{\ensuremath{\mathscr{L}}}
\newcommand{\Ts}{\ensuremath{\mathcal{T}}}
\newcommand{\RR}{\ensuremath{\mathbb{R}}}
\newcommand{\Id}{\ensuremath{\mathrm{Id}}}
\begin{document}
\title[Unconditional basic sequences in function spaces]{Unconditional basic sequences in function spaces with applications to Orlicz spaces}
\author[J. L. Ansorena]{Jos\'e L. Ansorena}\address{Department of Mathematics and Computer Sciences\\
Universidad de La Rioja\\
Logro\~no 26004\\ Spain}
\email{joseluis.ansorena@unirioja.es}

\author[G. Bello]{Glenier Bello}
\address{Departamento de Matem\'{a}ticas e
Instituto Universitario de Matem\'{a}ticas y Aplicaciones\\
Universidad de Zaragoza\\
50009 Zaragoza\\
Spain}
\email{gbello@unizar.es}
\subjclass[2020]{46B15,46B42,46B03}
\keywords{Orlicz space, unconditional basis, subsymmetric basis}
\begin{abstract}
We find conditions on a function space $\LL$ that ensure that it behaves as an $L_p$-space in the sense that any unconditional basis of a complemented subspace of $\LL$ either is equivalent to the unit vector system of $\ell_2$ or has a subbasis equivalent to a disjointly supported basic sequence. This dichotomy allows us to classify the symmetric basic sequences of $\LL$. Several applications to Orlicz function spaces are provided.
\end{abstract}
\thanks{Both authors acknowledge the support of the Spanish Ministry for Science and Innovation under Grant PID2022-138342NB-I00 for \emph{Functional Analysis Techniques in Approximation Theory and Applications (TAFPAA)}. G. Bello has also been partially supported by PID2022-137294NB-I00, DGI-FEDER and by Project E48\_23R, D.G. Arag\'{o}n}
\maketitle
\section{Introduction}\noindent 
Recall that a sequence $(\xx_n)_{n=1}^{\infty}$ in a Banach space $\UU$ (over the real or complex field $\FF$) is a \emph{basic sequence} if it is a Schauder basis of its closed linear span $[\xx_n\colon n\in\NN]$. Two sequences, typically two basic sequences, $(\xx_n)_{n=1}^{\infty}$ and $(\yy_n)_{n=1}^{\infty}$ in Banach spaces $\XX$ and $\YY$, respectively, are said to be \emph{equivalent} if there a linear isomorphism 
\[
T\colon[\xx_n \colon n\in\NN]\to [\yy_n \colon n\in\NN]
\] 
such that $T(\xx_n)=\yy_n$ for all $n\in\NN$. A central problem in the isomorphic theory of Banach spaces is the classification of the mutually non-equivalent basic sequences of a certain type in a Banach space $\UU$. Among the conditions we can impose to tackle this classification we highlight complementability, unconditonality, spreadability, and symmetry.

A basic sequence $\XB=(\xx_n)_{n=1}^{\infty}$ in $\XX$  is said to be \emph{complemented} if $[\xx_n\colon n\in\NN]$ is complemented in $\XX$. The basic sequence $\XB=(\xx_n)_{n=1}^{\infty}$ in $\XX$ is said to be  \emph{unconditional} if the rearranged sequence $(\xx_{\pi(n)})_{n=1}^{\infty}$ is a basic sequence for any permutation $\pi$ of $\NN$. In turn,  $\XB$ is said to be \emph{symmetric} (resp., \emph{spreading}) if it is equivalent to  $(\xx_{\pi(n)})_{j=1}^{\infty}$ for any permutation (resp., increasing map) $\pi$ of $\NN$. 
Symmetric basic sequences are both unconditional and spreading (see \cites{Singer1962, KadPel1962}). In practice, the only features that one needs about symmetric basic sequences in many situations are their unconditionality and spreadability, to the extent that it was believed that symmetric bases could be characterized as those bases that are simultaneously unconditional and spreading. As Garling \cite{Garling1968} provided a counterexample disproving it, Singer \cite{Singer1981} coined the word \emph{subsymmetric} to refer to unconditional spreading bases.

Let us outline the more relevant results in the classification of symmetric and subsymmetric basic sequences in Banach spaces. The unit vector system is a symmetric basis for $\ell_p$, $1\le p<\infty$, and $c_0$. Moreover, it is the unique subsymmetric basic sequence of those spaces (see, e.g., \cite{AAW2018}*{Proposition 2.14} and \cite{AlbiacKalton2016}*{Proposition 2.1.3}).  The authors of \cite{ACL1973} address the task of studying the symmetric basic sequence structure of Lorentz sequence spaces. They proved that if $1\le p<\infty$ and $\ww$ is a non-increasing weight whose primitive sequence is submultiplicative, then $d(\ww,p)$ has exactly two subsymmetric (and symmetric) basic sequences, namely the unit vector bases of $d(\ww,p)$ and $\ell_p$. They also proved that  if the submultiplicative condition breaks down, then $d(\ww,p)$ has more than two symmetric bases, and there are instances where $d(\ww,p)$ has infinitely many symmetric basic sequences. The authors of \cite{AADK2021} studied the subsymmetric counterpart of Lorentz spaces, namely Garling sequence spaces $g(\ww,p)$ modelled after the aforementioned Garling's counterexample. They proved that for any $1\le p<\infty$ and any  non-increasing weight $\ww$, $g(\ww,p)$ has a unique symmetric basic sequence, namely the unit vector system of $\ell_p$, and infinitely many subsymmetric basic sequences. The basic sequence structure of Orlicz sequence spaces has also been deeply studied. Let $h_F$ denote the separable part of the Orlicz sequence space $\ell_F$. It is known \cite{Lindberg1973} that every subsymmetric basic sequence in $h_F$ is equivalent to the unit vector system of an Orlicz space $h_G$. In particular, every subsymmetric basic sequence is symmetric. Lindenstrauss and Tzafriri showed in \cite{LinTza1971} that if 
\[
\lim_{t\to 0^+} \frac{tF^{\prime}(t)}{F(t)}
\] 
exists,  then $h_F$ has a unique symmetric basis. In the same paper, an Orlicz sequence space with exactly two symmetric basic sequences is supplied. The same authors  gave in \cite{LinTza1973}  a sufficient condition for $h_F$ to have uncountably many subsymmetric basic sequences. The article \cite{DilworthSari2008} contains an intricate construction of an Orlicz  sequence space with a countably infinite collection of symmetric basic sequences. We also highlight  that Tsirelson \cite{Tsirelson1974} proved the existence of a Banach  space without a copy of $\ell_p$, $1\le p<\infty$, nor $c_0$, thus solving a long-standing problem that goes back to Banach. As a matter of fact, the space nowadays known as the original Tsirelson space $\Ts^*$,  its dual, denoted by $\Ts$ after \cite{FigielJohnson1974}, and the convexifications of $\Ts$,  have no subsymmetric basic sequence (see \cites{FigielJohnson1974,Tsirelson1974}).

While the subsymmetric basic sequence structure of the more relevant sequence spaces is quite well understood, the advances within the framework of function spaces, i.e.,  Banach spaces over nonatomic measure spaces, are much scanter. Arguably, function spaces have, in general, a richer structure than sequence spaces, so it is more challenging to find tools that fit their study.  In view of this,  the paper  \cite{KadPel1962}, which successfully addresses the task of classifying the subsymmetric basic sequences of Lebesgue spaces $L_p$, deserves to be considered one of the peaks of the theory. In it, Kadec and Pe{\l}czy{\'n}ski proved that any subsymmetric basic sequence of $L_p$, $2 <p<\infty$, is equivalent to the unit vector system of $\ell_p$ or $\ell_2$. The situation in the case when $1\le p<2$ is quite different. In fact,  if $1\le p <2$, $L_p$ has a basic sequence isometrically equivalent to the unit vector system of $\ell_q$ for every $q$ in the interval $[p,2]$ (see \cite{LinPel1968}*{Corollary ~1 to Theorem~7.2}). Imposing the basic sequence to be complemented makes a difference. The authors of \cite{KadPel1962} proved that the list of subsymmetric sequence spaces that are equivalent to a complemented basic sequence of $L_p$ reduces to the unit vector system of $\ell_q$ with $q\in\{2,p\}$. We also note that, since $L_1$ is a $\SL_1$-space, $\ell_1$ is, up to equivalence, the unique complemented normalized unconditional basic sequence of $L_1$ (see \cite{LinPel1968}*{Theorem 6.1}).  Let us also mention the work of Raynaud, who proved that $\ell_s$ embeds into $L_q(L_p)$, $1\le p<q<\infty$, if and only if it embeds into $L_p$ or $L_q$ (see \cite{Raynaud1985}).

In this document, we generalize results from the milestone paper \cite{KadPel1962} by proving a dichotomy theorem that works for function spaces other than Lebesgue spaces. Then, we put this dichotomy in use to determine the subsymmetric basic sequence structure of certain Orlicz function spaces. 

The paper is organized as follows. In Section~\ref{sec:lattices} we revisit some classical results of Banach lattices recording their applications to function spaces. In Section~\ref{sec:UBS} we give several versions of the small perturbation principle for unconditional basic sequences that fit our purposes. Section~\ref{sec:FunctSpaces} contains the main theoretical results. In Section~\ref{sec:DirectSums} we apply them to direct sums of Lebesgue spaces, and in Section~\ref{sec:Orlicz} we use the developed machinery to study Orlicz sequence spaces. 

\section{Function spaces as Banach lattices}\label{sec:lattices}\noindent
Let  $(\Omega,\Sigma,\mu)$ be a $\sigma$-finite measure space, and let $\FF$ denote the real or complex scalar field. We will denote by  $L_0(\mu)$ the linear space consisting of all $\FF$-valued measurable functions on $\Omega$, and by $ L_0^+(\mu)$ the cone consisting of all measurable functions with values in $[0,\infty]$. As usual, we identify functions that differ in a null set. Following the nowadays standard terminology from  \cite{BennettSharpley1988}, a \emph{function norm} over the measure space $(\Omega,\Sigma,\mu)$ will be a map $\rho\colon L_0^+(\mu)\to [0,\infty]$ such that
\begin{enumerate}[label={(F.\alph*)}, leftmargin=*]
\item  $\rho(f)=0$ if and only if $f=0$ $\mu$-a.e.;
\item\label{FN:PAditive} $\rho(f+g)\le \rho(f)+\rho(g)$ for every $f$, $g\in L_0^+(\mu)$;
\item $\rho(t f) =t\rho(f)$ for every $t\in[0,\infty]$ and $f\in  L_0^+(\mu)$;
\item $\rho(f)\le\rho(g)$ whenever $f$ and $g\in L_0^+(\mu)$ satisfy $f\le g$ $\mu$-a.e.;
\item $\rho(\chi_E)<\infty$ for every $E\in\Sigma$ with $\mu(E)<\infty$;
\item\label{FN:Fatou} $\rho(\lim_n f_n)=\lim_n \rho(f_n)$ for any non-decreasing sequence $(f_n)_{n=1}^\infty$ in $L_0^+(\mu)$; and
\item\label{FN:L1B} for every $E\in\Sigma$ with $\mu(E)<\infty$ there is a constant $C_E$ such that $\int_E f\, d\mu\le C_E\rho(f)$ for every $f\in  L_0^+(\mu)$.
\end{enumerate}

If $\rho$ is a function norm, then
\[
\LL_\rho=\{ f\in L_0(\mu) \colon \rho(\abs{f})<\infty\},
\]
endowed with the ordering `$f$ is not greater than $g$ if $f\le g$ almost everywhere' and the norm 
\[
\norm{\,\cdot\,}_\rho:= \rho(\abs{\,\cdot\,})
\]
is a Banach lattice, so we can apply to it the theory of Banach lattices  masterfully gathered in the handbook \cite{LinTza1979}. For the reader's ease, we single out some topics on Banach lattice theory relevant to us.
 
 \subsection{Lattice convexity and concavity versus Rademacher type and cotype}
 Given $1\le r\le \infty$, we say that the function norm $\rho$ is \emph{lattice $r$-convex} (resp., \emph{lattice $r$-concave}) if there is a constant $C$ such that $A\le C B$ (resp., $B\le CA$) for every finite family $(f_j)_{j\in J}$ in $L_0^+(\mu)$, where
\[
A:=\rho\left( \left(\sum_{j\in J} f_j^r\right)^{1/r}\right)\; \mbox{ and }\; B:=\left( \sum_{j\in J} \rho^r(f_j)\right)^{1/r}.
\]
If we impose the inequality  $A\le C B$ (resp., $B\le CA$) to hold only in the case when the family $(f_j)_{j\in J}$ consists of disjointly supported functions, so that
\[
\left(\sum_{j\in J} f_j^r\right)^{1/r}=\sum_{j\in J} f_j,
\]
we say that $\rho$ satisfies an \emph{upper} (resp., \emph{lower}) \emph{$r$-estimate}. It is clear that the Banach lattice $\LL_\rho$ is lattice $p$-convex (resp., is lattice $p$-concave, satisfies an upper $p$-estimate, or satisfies a lower $p$-estimate) if and only if $\rho$ does. The following result evinces the tight connection between convexity and concavity of Banach lattices, and  Rademacher type and cotype of Banach spaces. Notice that any Banach lattice is lattice $1$-convex and lattice $\infty$-concave. So we say that $\LL$ has nontrivial convexity (resp., concavity) if it is lattice $p$-convex for some $p>1$ (resp., lattice $p$-concave for some $p<\infty$). Similarly, we say that a Banach space $\XX$ has nontrivial type (resp., cotype) if it has Rademacher type $p$ for some $p>1$ (resp., Rademacher cotype $p$  for some $p<\infty$).
 
 \begin{theorem}[see \cite{LinTza1979}*{Theorem 1.f.3, Theorem 1.f.7, Corollary 1.f.9 and Corollary 1.f.13}]\label{thm:RadConv}
Let $\LL$ be a Banach lattice.
\begin{itemize}[leftmargin=*]
\item Let $1< r\le \infty$. If $\LL$ satisfies an upper $r$-estimate, then it is $p$-convex for any $1<p<r$. In turn, if $\LL$ is lattice $r$-convex and has nontrivial concavity, then it has Rademacher type $r$. If $r\le 2$ and  $\LL$ has Rademacher type $r$, then $\LL$ satisfies an upper $p$-estimate for every $1<p<r$, and has nontrivial concavity.

\item Let $1\le r< \infty$. If $\LL$ satisfies a lower $r$-estimate, then it is $p$-concave for any $r<p<\infty$. In turn, if $\LL$ is lattice $r$-concave, then it has Rademacher cotype $r$. Finally, if $r\ge 2$ and  $\LL$ has Rademacher cotype $r$, then $\LL$ satisfies a lower $p$-estimate for every $r<p<\infty$.
\end{itemize}
 \end{theorem}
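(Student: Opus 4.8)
The plan is to verify the six implications one at a time, splitting them into (I) those relating one-sided ($p$-)estimates to lattice $p$-convexity and $p$-concavity, and (II) those linking lattice convexity/concavity with Rademacher type and cotype. In every step I would fix a finite family $x_1,\dots,x_n$ in $\LL$ and, via Krivine's functional calculus, pass to a concrete function model --- a sublattice of some $C(K)$ --- in which the lattice expressions $(\sum_j|x_j|^s)^{1/s}$ are computed pointwise. I expect (I) to be the technical heart.

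I would settle (II) first. Two of its implications --- ``type $r$ (with $r\le 2$) $\Rightarrow$ upper $p$-estimate for $p<r$'' and ``cotype $r$ (with $r\ge 2$) $\Rightarrow$ lower $p$-estimate for $p>r$'' --- involve only disjointly supported families, and for such families $|\sum_j\varepsilon_j x_j|=\sum_j|x_j|$ regardless of the signs, so $\Ave_\varepsilon\norm{\sum_j\varepsilon_j x_j}=\norm{\sum_j|x_j|}$; plugging this into the defining inequality for type $r$ (resp.\ cotype $r$) produces an upper (resp.\ lower) $r$-estimate directly, hence an upper (resp.\ lower) $p$-estimate for $p<r$ (resp.\ $p>r$) via $\ell_r\hookrightarrow\ell_p$. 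The extra ``nontrivial concavity'' conclusion in the type-$r$ case is of a different nature: a space of nontrivial type contains no $\ell_\infty^n$'s uniformly, and a Banach lattice with no uniform copies of $\ell_\infty^n$ has nontrivial concavity (Maurey--Pisier and \cite{LinTza1979}).

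The remaining implications of (II) rest on the Khintchine--Maurey inequality in Banach lattices. Its easy half, valid in \emph{every} Banach lattice, is obtained by applying the scalar Khintchine inequality pointwise and taking the monotone, convex lattice norm:
\[
\norm{(\textstyle\sum_j|x_j|^2)^{1/2}} \le C\,\Ave_\varepsilon\,\norm{\textstyle\sum_j\varepsilon_j x_j}.
\]
Together with lattice $r$-concavity and the pointwise bound $(\sum_j|a_j|^r)^{1/r}\le(\sum_j|a_j|^2)^{1/2}$ (valid for $r\ge 2$), this yields $(\sum_j\norm{x_j}^r)^{1/r}\lesssim\norm{(\sum_j|x_j|^r)^{1/r}}\le\norm{(\sum_j|x_j|^2)^{1/2}}\lesssim\Ave_\varepsilon\norm{\sum_j\varepsilon_j x_j}$, i.e.\ Rademacher cotype $r$ (read as cotype $2$ when $r<2$, which $\LL$ then has, being $2$-concave). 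For the reverse link, if $\LL$ is lattice $r$-convex \emph{and} has nontrivial concavity --- hence finite cotype --- then the hard half, $\Ave_\varepsilon\norm{\sum_j\varepsilon_j x_j}\lesssim\norm{(\sum_j|x_j|^2)^{1/2}}$, becomes available; since $\norm{(\sum_j|x_j|^2)^{1/2}}\le\norm{(\sum_j|x_j|^r)^{1/r}}$ for $r\le 2$ and lattice $r$-convexity bounds the last term by $\lesssim(\sum_j\norm{x_j}^r)^{1/r}$, one gets Rademacher type $r$ (type $2$ when $r>2$).

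Finally (I). The core claim is that an upper $r$-estimate forces lattice $p$-convexity for every $p<r$; the partner statement --- that a lower $r$-estimate forces lattice $p$-concavity for every $p>r$ --- then follows by duality (modulo the usual order-continuity provisos, a lower $r$-estimate on $\LL$ is an upper $r'$-estimate on $\LL^*$, lattice $p$-concavity of $\LL$ is lattice $p'$-convexity of $\LL^*$, and $p>r\iff p'<r'$), or by a parallel direct argument. For the convexity claim I would, in the function model, decompose $x_1,\dots,x_n\in\LL^+$ into \emph{rank layers}: let $h_k$ be the measurable function equal at each point to the $k$-th largest of $x_1,\dots,x_n$ there, so $h_1\ge h_2\ge\dots\ge h_n\ge 0$, $(\sum_j x_j^p)^{1/p}=(\sum_k h_k^p)^{1/p}$ pointwise, and $h_k=\sum_j g_{j,k}$ with $(g_{j,k})_j$ disjointly supported and $0\le g_{j,k}\le x_j$. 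The upper $r$-estimate, applied layer by layer, bounds each $\norm{h_k}$, and summing over the dyadic blocks $\{k:2^m\le k<2^{m+1}\}$ (on which $h_k\le h_{2^m}$) brings in a factor $2^{-m(1/p-1/r)}$ that is summable exactly because $p<r$; reassembling the estimates gives $\norm{(\sum_j x_j^p)^{1/p}}\le C(\sum_j\norm{x_j}^p)^{1/p}$. I expect this summation --- threading the disjoint estimate through the rank-layer decomposition while keeping the constants uniform --- to be the main obstacle; by contrast, (II) is routine once Krivine's calculus and Khintchine--Maurey are available.
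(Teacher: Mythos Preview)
The paper does not supply a proof of this theorem; it is stated as a quotation of known results from \cite{LinTza1979} (Theorems 1.f.3, 1.f.7 and Corollaries 1.f.9, 1.f.13), so there is no in-paper argument to compare against. Your outline is, in fact, a faithful reconstruction of those original proofs: the Khintchine--Maurey square-function equivalence together with the pointwise inequalities between $\ell_r$ and $\ell_2$ norms is exactly what drives the convexity/concavity $\Leftrightarrow$ type/cotype implications, and the decreasing-rearrangement (``rank layer'') decomposition with a dyadic regrouping is precisely the mechanism of \cite{LinTza1979}*{Theorem 1.f.7} for deducing $p$-convexity from an upper $r$-estimate when $p<r$. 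The duality reduction you propose for the concavity half is also the standard route. Nothing in your sketch is wrong; it simply goes beyond what the paper itself undertakes.
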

 
 \begin{remark}
 Theorem~\ref{thm:RadConv} gives that $\LL$ has nontrivial cotype if and only  if it has nontrivial concavity, and that $\LL$ has nontrivial type if and only if  it has both nontrivial convexity and nontrivial concavity. Notice that these results imply that if $\LL$ has nontrivial type, then it also has nontrivial cotype. This result holds for general Banach spaces, but it depends on the deep result from \cite{MaureyPisier1973} that $\ell_\infty$ is finitely representable in any Banach space with no finite cotype.
 \end{remark}
 
If $r=2$, we can say even more than that stated in Theorem~\ref{thm:RadConv}.
 
 \begin{theorem}[see \cite{LinTza1979}*{Theorem 1.f.16 and Theorem 1.f.17}]\label{thm:RadConvBis}
Let $\LL$ be a Banach lattice.
\begin{itemize}[leftmargin=*]
\item $\LL$ has Rademacher type $2$ if and only if it is $2$-convex and has nontrivial concavity.
\item $\LL$ has Rademacher cotype $2$ if and only if it is $2$-concave.
\end{itemize}
 \end{theorem}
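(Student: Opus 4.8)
The plan is to peel off the easy implications from Theorem~\ref{thm:RadConv} and then prove the two converses, handling the cotype statement first and bootstrapping from it to the type statement by Banach-lattice duality. Indeed, by Theorem~\ref{thm:RadConv} lattice $2$-concavity of $\LL$ already forces Rademacher cotype $2$, and lattice $2$-convexity together with nontrivial concavity already forces Rademacher type $2$; so what remains is to show that \emph{(a)} Rademacher cotype $2$ implies lattice $2$-concavity, and \emph{(b)} Rademacher type $2$ implies lattice $2$-convexity together with nontrivial concavity.

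For \emph{(a)} I would invoke the \emph{Khintchine--Maurey inequality} for Banach lattices: if a Banach lattice $\XX$ has nontrivial cotype, then there is a constant $K=K(\XX)$, depending only on a cotype constant of $\XX$, such that
\[
\left(\mathbb{E}_\varepsilon\Bigl\|\sum_i\varepsilon_i x_i\Bigr\|^2\right)^{1/2}\le K\,\Bigl\|\Bigl(\sum_i|x_i|^2\Bigr)^{1/2}\Bigr\|
\]
for every finite family $(x_i)$ in $\XX$, where the square function on the right is defined via Krivine's homogeneous functional calculus for Banach lattices. The opposite inequality (with an absolute constant) holds in \emph{every} Banach lattice, as an immediate consequence of the scalar Khintchine inequality applied pointwise together with the monotonicity of the lattice norm; it is the displayed estimate that genuinely uses nontrivial cotype, and its proof — resting on Krivine's calculus and an averaging argument — is the step I expect to be the real obstacle, the one that in a self-contained treatment would carry most of the weight. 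It is classical nonetheless; see \cite{LinTza1979}*{Section~1.d}. Granting it, if $\LL$ has Rademacher cotype $2$ then in particular it has nontrivial cotype, so for any finite family $(x_i)$ in $\LL$ the definition of cotype $2$ gives $\bigl(\sum_i\|x_i\|^2\bigr)^{1/2}\le C\bigl(\mathbb{E}_\varepsilon\|\sum_i\varepsilon_i x_i\|^2\bigr)^{1/2}$, and chaining this with the Khintchine--Maurey inequality yields $\bigl(\sum_i\|x_i\|^2\bigr)^{1/2}\le CK\,\bigl\|\bigl(\sum_i|x_i|^2\bigr)^{1/2}\bigr\|$, which is precisely lattice $2$-concavity of $\LL$.

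For \emph{(b)}, assume $\LL$ has Rademacher type $2$. Nontrivial concavity is free: it is the case $r=2$ of the implication in Theorem~\ref{thm:RadConv} stating that Rademacher type $r\le 2$ entails nontrivial concavity. To obtain lattice $2$-convexity I would dualize. Since $\LL$ has type $2$, its dual Banach lattice $\LL^*$ has Rademacher cotype $2$ (the standard type--cotype duality: type $p$ of a space passes to cotype $p'$ of its dual, here with $p=p'=2$). By part \emph{(a)} — applied now to the Banach lattice $\LL^*$, which is unrelated to $\LL$, so no circularity arises — $\LL^*$ is lattice $2$-concave. By the duality between lattice convexity and lattice concavity (\cite{LinTza1979}*{Proposition~1.d.4}), the bidual $\LL^{**}$ is then lattice $2$-convex. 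Finally, the canonical embedding realizes $\LL$ as a closed sublattice of $\LL^{**}$, and the lattice $2$-convexity inequality — involving only the norm and the lattice operations, both of which descend to closed sublattices — holds in $\LL$ as well. Thus $\LL$ is lattice $2$-convex and has nontrivial concavity, completing \emph{(b)}. Since every implicit constant above is controlled by a type or cotype constant of $\LL$, this argument in fact delivers quantitative versions of both equivalences.
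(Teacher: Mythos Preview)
The paper does not prove this theorem at all: it is stated with a citation to \cite{LinTza1979}*{Theorems 1.f.16 and 1.f.17} and used as a black box. Your sketch is correct and is essentially the classical argument found in that reference --- deducing $2$-concavity from cotype $2$ via the Maurey--Khintchine square-function inequality, and then obtaining $2$-convexity from type $2$ by dualizing through $\LL^{**}$.
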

 
 If $\XX$ is a Banach space of type $r$, then $\XX^*$ has Rademacher cotype $r'$, where $r'$ is the conjugate index defined by $1/r+1/r'=1$ (see \cite{LinTza1979}*{Proposition 1.e.17}). In Banach lattices, a converse result holds.
 
 \begin{theorem}[\cite{LinTza1979}*{Theorem 1.f.18}]\label{thm:DualityTypeCotype}
 Let $1<r\le 2$ and let $\LL$ be a Banach  lattice. Then, $\LL$ has Radenacher type $r$ if and only if $\LL^*$ has Rademacher cotype $r'$ and nontrivial type.
 \end{theorem}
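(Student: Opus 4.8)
The plan is to recast, via Maurey's averaging theorem, the Rademacher conditions as lattice inequalities for the square functions $\norm{(\sum_i\abs{x_i}^2)^{1/2}}$, to transfer these between $\LL$ and $\LL^*$ by duality, and to handle the ``nontrivial type'' side conditions through the standard duality between lattice convexity and concavity. The direct implication is short. Assume $\LL$ has type $r$. Since $1<r\le 2$, Theorem~\ref{thm:RadConv} shows that $\LL$ has nontrivial convexity and nontrivial concavity; say $\LL$ is lattice $p$-convex for some $p>1$ and lattice $q$-concave for some $q<\infty$. By the standard duality between lattice convexity and concavity (\cite{LinTza1979}*{Proposition 1.d.4}), $\LL^*$ is then lattice $q'$-convex and lattice $p'$-concave with $q'>1$ and $p'<\infty$, so it has nontrivial convexity and nontrivial concavity, and therefore nontrivial type by the Remark following Theorem~\ref{thm:RadConv}. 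Finally, $\LL^*$ has Rademacher cotype $r'$ by the general type--cotype duality recalled above (\cite{LinTza1979}*{Proposition 1.e.17}).

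For the converse, assume $\LL^*$ has cotype $r'$ and nontrivial type. By the Remark following Theorem~\ref{thm:RadConv}, $\LL^*$ has nontrivial convexity and nontrivial concavity; in particular $\LL^*$ has finite cotype. Dualizing the nontrivial convexity of $\LL^*$ as above, $\LL^{**}$ has nontrivial concavity, and since $\LL$ embeds as a closed sublattice of $\LL^{**}$ and lattice concavity passes to sublattices, $\LL$ also has nontrivial concavity, hence finite cotype. Consequently $\LL$ is order continuous, so $\LL^*$ is its K\"othe dual with the duality pairing given by integration, and the pointwise Cauchy--Schwarz inequality together with a Hahn--Banach argument applied to the positive element $(\sum_i\abs{x_i}^2)^{1/2}$ yields, for every finite family $(x_i)$ in $\LL$, the duality formula
\[
\norm{\bigl(\textstyle\sum_i\abs{x_i}^2\bigr)^{1/2}}_{\LL}=\sup\Bigl\{\operatorname{Re}\textstyle\sum_i\langle x_i,\phi_i\rangle:\ \norm{\bigl(\textstyle\sum_i\abs{\phi_i}^2\bigr)^{1/2}}_{\LL^*}\le 1\Bigr\}.
\]

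Now I would run the following chain. Since $\LL^*$ has finite cotype, Maurey's averaging theorem (see \cite{LinTza1979}*{Section 1.d}) gives $\Ave_{\pm}\norm{\sum_i\pm\phi_i}_{\LL^*}\lesssim\norm{(\sum_i\abs{\phi_i}^2)^{1/2}}_{\LL^*}$, so the cotype $r'$ estimate for $\LL^*$ turns into the lattice estimate
\[
\Bigl(\textstyle\sum_i\norm{\phi_i}^{r'}\Bigr)^{1/r'}\lesssim\norm{\bigl(\textstyle\sum_i\abs{\phi_i}^2\bigr)^{1/2}}_{\LL^*}.
\]
Feeding this and H\"older's inequality $\sum_i\norm{x_i}\,\norm{\phi_i}\le(\sum_i\norm{x_i}^r)^{1/r}(\sum_i\norm{\phi_i}^{r'})^{1/r'}$ into the duality formula produces the dual lattice estimate $\norm{(\sum_i\abs{x_i}^2)^{1/2}}_{\LL}\lesssim(\sum_i\norm{x_i}^r)^{1/r}$. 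Finally, since $\LL$ too has finite cotype, Maurey's theorem gives $\Ave_{\pm}\norm{\sum_i\pm x_i}_{\LL}\lesssim\norm{(\sum_i\abs{x_i}^2)^{1/2}}_{\LL}$, and composing this with the previous estimate shows that $\LL$ has type $r$.

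The step I expect to be the main obstacle is that this chain has to deliver type with the \emph{exact} exponent $r$: Theorem~\ref{thm:RadConv} on its own yields only type $p$ for every $p<r$ (through upper and lower $p$-estimates and their duality), which is strictly weaker, so one genuinely needs both the self-dual reformulation of type and cotype as the $\ell_2$-lattice inequalities above and Maurey's averaging theorem to close the gap. Two subsidiary points call for care. First, the square-function duality formula must be justified in the possible presence of singular functionals; this is exactly why one first shows that $\LL$ is order continuous, so that $\LL^*$ is its K\"othe dual and the Cauchy--Schwarz argument makes literal sense. Second, one must make sure that the finite-cotype hypothesis required to invoke Maurey's theorem is available for \emph{both} $\LL$ and $\LL^*$; for $\LL^*$ this is why the hypothesis must be ``nontrivial type'' rather than merely ``cotype $r'$'', and for $\LL$ it is obtained by the lattice-duality argument of the second paragraph.
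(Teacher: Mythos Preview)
The paper does not prove Theorem~\ref{thm:DualityTypeCotype}; it is stated without proof and attributed to \cite{LinTza1979}*{Theorem 1.f.18}. So there is no in-paper argument to compare against. That said, your proof is correct and is essentially the argument one finds in Lindenstrauss--Tzafriri: reduce type and cotype to square-function inequalities via the Maurey--Khintchine equivalence (valid under finite cotype), dualize the square-function inequality between $\LL$ and its K\"othe dual, and use the convexity/concavity duality to propagate the ``finite cotype'' side condition where it is needed.

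One small inaccuracy in your closing commentary: you write that the nontrivial-type hypothesis on $\LL^*$ is what guarantees finite cotype \emph{for $\LL^*$}. It is not---$\LL^*$ already has finite cotype because it has cotype $r'$ by assumption, so Maurey applies there unconditionally. The nontrivial-type hypothesis on $\LL^*$ is used exactly where you actually use it in the body of the proof: to obtain nontrivial convexity of $\LL^*$, hence nontrivial concavity of $\LL$ (via $\LL\hookrightarrow\LL^{**}$), hence finite cotype of $\LL$, which is what licenses the \emph{second} application of Maurey. This is only a slip in the self-assessment; the proof itself is sound.
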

 
\subsection{Absolute continuity}
A Banach lattice is said to be \emph{complete} (resp., \emph{$\sigma$-complete}) if every order bounded set (resp., order bounded sequence) has a least upper bound. It is said to be \emph{order continuous} (resp., $\sigma$-order continuous) if for every downward directed set (resp., decreasing sequence) $(f_\lambda)_{\lambda\in\Lambda}$ with $\land_{\lambda\in\Lambda} f_\lambda=0$ we have $\lim_{\lambda\in\Lambda} f_\lambda=0$.  Taking advantage of Fatou property \ref{FN:Fatou}, we give a sharp characterization of order continuous function spaces. Prior to stating it, we recall that a Schauder basis $(\xx_n)_{n=1}^\infty$ of a Banach  space $\XX$ is said to be \emph{boundedly complete} if the series $\sum_{n=1}^\infty a_n \, \xx_n$ converges (in the norm-topology) whenever the scalars  $(a_n)_{n=1}^\infty$ satisfy
\[
\sup_{m\in\NN} \norm{ \sum_{n=1}^m a_n \, \xx_n}<\infty.
\] 
 
\begin{theorem}\label{thm:CharAbsCont}
Given a function norm $\rho$ over a $\sigma$-finite measure space, the following are equivalent.
\end{theorem}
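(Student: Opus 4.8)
The plan is to prove the stated conditions equivalent by a cycle of implications, leaning on the Fatou property~\ref{FN:Fatou} and on the $\sigma$-finiteness of $\mu$ at the two places where $\LL_\rho$ does more than a bare Banach lattice. I expect the list to include, in some order: order continuity, $\sigma$-order continuity, \emph{absolute continuity of the norm} (i.e.\ $\norm{f\chi_{E_n}}_\rho\to0$ whenever $f\in\LL_\rho$ and $E_n\downarrow$ to a $\mu$-null set), density of the simple functions supported on sets of finite measure, absence of a subspace (resp.\ sublattice) isomorphic to $c_0$, the property of being a KB-space (boundedly complete Banach lattice), weak sequential completeness, and possibly the identification of $(\LL_\rho)^*$ with the associate space. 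I would organise the argument so that each of these can be slotted into a single loop.

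\emph{The cheap arcs.} Order continuity $\Rightarrow$ $\sigma$-order continuity is definitional, and $\sigma$-order continuity $\Rightarrow$ absolute continuity because $\abs{f}\chi_{E_n}\downarrow0$ $\mu$-a.e.\ whenever $E_n\downarrow$ to a null set. The implication ``no copy of $c_0$'' $\Rightarrow$ ``no $c_0$-sublattice'' is trivial, and the classical Banach-lattice dichotomy (see \cite{LinTza1979}) — a Banach lattice is order continuous iff it contains no sublattice lattice-isomorphic to $c_0$ — turns the latter back into order continuity, so the cycle will close once one more arc back to ``no copy of $c_0$'' is supplied. I will also record the standard facts that a KB-space is weakly sequentially complete and contains no copy of $c_0$, and that $c_0$ is not weakly sequentially complete.

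\emph{From absolute continuity to order continuity.} This is the substantive step, and the one I expect to cause the most trouble, since it must upgrade control of $\norm{f\chi_{E_n}}_\rho$ to control of an arbitrary downward directed net $f_\lambda\downarrow0$. First I would sharpen absolute continuity to a uniform form: for each $E$ with $\mu(E)<\infty$, $\sup\{\norm{\chi_A}_\rho:A\subseteq E,\ \mu(A)<\eta\}\to0$ as $\eta\to0$; if this failed, sets $A_j\subseteq E$ with $\mu(A_j)<2^{-j}$ and $\norm{\chi_{A_j}}_\rho\ge c>0$ would make $(\bigcup_{i\ge j}A_i)_j$ a sequence decreasing to a null set with $\norm{\chi_E\,\chi_{\bigcup_{i\ge j}A_i}}_\rho\ge c$, contradicting the hypothesis applied to $f=\chi_E$. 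Next, given $f_\lambda\downarrow0$ with $f:=f_{\lambda_0}\in\LL_\rho^+$, write $\Omega=\bigcup_m\Omega_m$ with $\mu(\Omega_m)<\infty$ and $\Omega_m\uparrow\Omega$; since $(f-m)_+\le f\chi_{\{f>m\}}$ with $\{f>m\}\downarrow\{f=\infty\}$ null, and $\Omega\setminus\Omega_m\downarrow\emptyset$, absolute continuity gives $\norm{f-\tilde f_m}_\rho\to0$ for $\tilde f_m:=\min\{f,m\}\chi_{\Omega_m}$. Using $g\wedge(h+k)\le g\wedge h+k$ for $k\ge0$, this reduces the problem to nets order-bounded by a fixed $m\chi_{\Omega_m}$; for such a net, order continuity of $L_1(\mu|_{\Omega_m})$ (a classical fact) forces $f_\lambda\to0$ in $L_1(\Omega_m)$, hence in measure, so from
\[
f_\lambda\le\delta\,\chi_{\Omega_m}+m\,\chi_{\{f_\lambda>\delta\}}
\]
and the uniform estimate above one gets $\limsup_\lambda\norm{f_\lambda}_\rho\le\delta\norm{\chi_{\Omega_m}}_\rho$; letting $\delta\to0$ finishes it. (For the purely sequential version one may invoke Egorov's theorem in place of $L_1$-order continuity.) The argument proves $\sigma$-order continuity and order continuity at once — in particular $\sigma$-order continuity $\Rightarrow$ order continuity, the implication that fails for a general $\sigma$-complete lattice such as $c_0$ and is rescued here because~\ref{FN:Fatou} makes $\LL_\rho$ a Dedekind-complete ideal over the $\sigma$-finite base.

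\emph{Density of simple functions and the remaining conditions.} Absolute continuity makes the truncations $\tilde f_m$ converge to $f$ in norm, and each $\tilde f_m$ — bounded, of finite-measure support — is a uniform, hence (as $\norm{\chi_{\Omega_m}}_\rho<\infty$) a norm, limit of simple functions; the converse runs through the uniform estimate applied to the finitely many level sets of an approximating simple function. For the $c_0$-conditions: if $\LL_\rho$ contained a $c_0$-sublattice spanned by a normalized disjoint sequence $(u_n)$ with uniformly bounded partial sums, then by~\ref{FN:Fatou} the a.e.\ pointwise sum $v:=\sum_n u_n$ lies in $\LL_\rho$ while $\norm{v-\sum_{n\le N}u_n}_\rho=\norm{\sum_{n>N}u_n}_\rho\not\to0$, contradicting $\sigma$-order continuity; conversely the classical dichotomy returns us to order continuity. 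Finally, $\sigma$-order continuity with~\ref{FN:Fatou} yields the KB-property (an increasing norm-bounded positive sequence has, by Fatou, a supremum in $\LL_\rho$, to which it converges by $\sigma$-order continuity); a KB-space is weakly sequentially complete with no copy of $c_0$, and the absence of a copy of $c_0$, being stronger than the absence of a $c_0$-sublattice, feeds back into the cycle. If the list also contains $(\LL_\rho)^*=$ associate space, that follows from absolute continuity by a routine Hahn--Banach/biduality argument using~\ref{FN:Fatou}. The delicate point throughout is the localization in the third paragraph — passing from sequences to genuine nets via $L_1$-order continuity and the Borel--Cantelli sharpening — since that is exactly where $\sigma$-finiteness and the Fatou property are indispensable.
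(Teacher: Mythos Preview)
Your mathematics is sound, but the target list and the paper's mode of proof both differ from what you anticipated. The seven conditions actually declared equivalent are: (i) order continuity; (ii) $\sigma$-order continuity; (iii) absolute continuity in your sense; (iv) validity of the dominated convergence theorem in $\LL_\rho$; (v) $\LL_\rho$ contains no isomorphic copy of $\ell_\infty$; (vi) no disjointly supported sequence is equivalent to the unit vector system of $\ell_\infty$; (vii) every unconditional basic sequence in $\LL_\rho$ is boundedly complete. Density of simple functions, weak sequential completeness, the KB-property, and the dual identification are \emph{not} on the list (the first and last appear only as separate remarks elsewhere in the section). The paper's proof is a catalogue of citations rather than a direct argument: (i)$\Leftrightarrow$(ii) because $\LL_\rho$ is $\sigma$-complete and one invokes \cite{LinTza1979}*{Proposition~1.a.8}; (ii)$\Leftrightarrow$(iii)$\Leftrightarrow$(iv) is \cite{BennettSharpley1988}*{Propositions~3.2 and~3.6}; (ii)$\Leftrightarrow$(v)$\Leftrightarrow$(vi) is \cite{LinTza1979}*{Proposition~1.a.7}; and (vi)$\Leftrightarrow$(vii) is James' theory of unconditional bases \cite{AlbiacKalton2016}*{Theorem~3.3.2}.

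Your hands-on route --- the Borel--Cantelli sharpening, the localisation to $m\chi_{\Omega_m}$, the passage through $L_1$-order continuity --- correctly reproduces the content behind the Bennett--Sharpley citations for (i)--(iii), and (iv) would fall out of the same machinery with only cosmetic changes. The real mismatch is that you built the non-embedding part of the cycle around $c_0$, whereas the paper's (v) and (vi) are about $\ell_\infty$. In this $\sigma$-complete setting the gap closes via the Fatou property (a normalised disjoint sequence with bounded partial sums has its pointwise supremum in $\LL_\rho$, so it spans a copy of $\ell_\infty$, not merely $c_0$), but you would need to make that bridge explicit. Condition (vii) you did not address at all; it requires the James dichotomy (an unconditional basis is boundedly complete iff its span contains no copy of $c_0$), which together with the $c_0$-to-$\ell_\infty$ upgrade links it back into the cycle. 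So: correct arguments aimed at a partly different list, versus the paper's approach of assembling the exact list from standard references.
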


\begin{enumerate}[label=(\roman*),leftmargin=*,widest=vii]
\item\label{it:OC} $\LL_\rho$ is order continuous.
\item\label{it:SOC} $\LL_\rho$ is $\sigma$-order continuous.
\item\label{it:AC}  $\lim_n \rho(f\chi_{A_n})=0$ for every $f\in L_0^+(\mu)$ with $\rho(f)<\infty$ and every non-increasing sequence $(A_n)_{n=1}^\infty$ in $\Sigma$ with $\lim_n \mu(A_n)=0$.
\item\label{it:TCD}  Lebesgue's dominated convergence theorem holds in $\LL_\rho$.  That is, if $(f_n)_{n=1}^\infty$ and $f$ in $L_0(\mu)$ are such that $\lim_n f_n=f$ a.~e. and $\rho(\sup_n \abs{f_n})<\infty$, then  $\lim_n f_n=f$ in $\LL_\rho$.
\item\label{it:Noloo} $\LL_\rho$ contains no copy of $\ell_\infty$.
\item\label{it:NoDisjointloo} No sequence of pairwise disjointly supported functions is equivalent to the unit vector system of $\ell_\infty$.
\item\label{it:BL} Every unconditional basic sequence in $\LL$ is boundedly complete.
\end{enumerate}

\begin{proof}
It is known that a Banach lattice is order continuous if and only if it is  $\sigma$-order complete and $\sigma$-order continuous \cite{LinTza1979}*{Proposition 1.a.8}. So, since $\LL_\rho$ is $\sigma$-complete,  \ref{it:OC} and  \ref{it:SOC} are equivalent. The equivalence between \ref{it:SOC}, \ref{it:AC} and \ref{it:TCD} follows from \cite{BennettSharpley1988}*{Propositions 3.2 and 3.6}. The equivalence between \ref{it:SOC}, \ref{it:Noloo} and \ref{it:NoDisjointloo} is a consequence of \cite{LinTza1979}*{Proposition 1.a.7}. Finally, the equivalence between \ref{it:NoDisjointloo} and \ref{it:BL} is a by-product of James' theory on unconditional bases (see \cite{AlbiacKalton2016}*{Theorem 3.3.2}).
\end{proof}

Following \cite{BennettSharpley1988}, we call function norms satisfying \ref{it:AC} \emph{absolutely continuous}. In this terminology, we record an interesting consequence of combining Theorem~\ref{thm:CharAbsCont} with the fact that $\ell_\infty$ has no finite cotype.

\begin{theorem}\label{thm:CotypeAC}
Let  $\rho$ be a function norm over a $\sigma$-finite measure space. Suppose that $\LL_\rho$ has nontrivial cotype. Then, $\rho$ is absolutely continuous.
\end{theorem}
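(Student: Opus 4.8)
The plan is to deduce this directly from the chain of equivalences in Theorem~\ref{thm:CharAbsCont}, combined with the standard fact that Rademacher cotype passes to closed subspaces and with the classical fact (alluded to just before the statement) that $\ell_\infty$ has no finite cotype. So I would argue by contraposition: assuming $\rho$ is not absolutely continuous, I produce a subspace of $\LL_\rho$ isomorphic to $\ell_\infty$, and then observe that such a subspace would inherit the finite cotype of $\LL_\rho$, which is impossible.

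First I would recall that, by the terminology fixed immediately after Theorem~\ref{thm:CharAbsCont}, ``$\rho$ absolutely continuous'' is by definition exactly condition \ref{it:AC} there. Hence, if $\rho$ fails to be absolutely continuous, the equivalence \ref{it:AC}$\Leftrightarrow$\ref{it:Noloo} (routed through \ref{it:SOC}) in that theorem yields that $\LL_\rho$ contains an isomorphic copy of $\ell_\infty$; that is, there is a closed subspace $\XX\subseteq\LL_\rho$ and a linear isomorphism of $\ell_\infty$ onto $\XX$. Next I invoke the hypothesis: $\LL_\rho$ has nontrivial cotype, meaning Rademacher cotype $q$ for some $q<\infty$. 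Since Rademacher cotype is inherited by closed subspaces (with the same constant), $\XX$ has cotype $q$, and therefore so does $\ell_\infty$, finite cotype being an isomorphic invariant. This contradicts the well-known fact that $\ell_\infty$ has no finite cotype, which one sees, e.g., from the uniform isometric embeddings $\ell_\infty^n\hookrightarrow\ell_\infty$ together with the unbounded growth in $n$ of the cotype-$q$ constants of $\ell_\infty^n$. The contradiction forces $\rho$ to be absolutely continuous.

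I do not expect a genuine obstacle here: the statement is essentially a repackaging of Theorem~\ref{thm:CharAbsCont} with a standard permanence property of cotype. The only points requiring a little care are quoting the correct equivalence in Theorem~\ref{thm:CharAbsCont} (order continuity, equivalently absolute continuity, being equivalent to the absence of an isomorphic copy of $\ell_\infty$) and keeping in mind that ``nontrivial cotype'' in the hypothesis means \emph{finite} cotype, so that the clash with $\ell_\infty$ is immediate.
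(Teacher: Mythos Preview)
Your proposal is correct and matches the paper's approach exactly: the paper does not give a separate proof but records the theorem as an immediate consequence of combining Theorem~\ref{thm:CharAbsCont} (specifically the equivalence \ref{it:AC}$\Leftrightarrow$\ref{it:Noloo}) with the fact that $\ell_\infty$ has no finite cotype, which is precisely the contrapositive argument you outline.
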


We also point  out that if $\rho$ is absolutely continuous, then the linear space consisting of all integrable simple functions is dense in $\LL_\rho$ (see \cite{BennettSharpley1988}*{Theorem 3.11}).

\subsection{Duality}
Given a function quasi-norm $\rho$ over a $\sigma$-finite measure space $(\Omega,\sigma,\mu)$ we set
\[
\rho^*\colon L_0^+(\mu)\to[0,\infty] ,\quad f\mapsto \sup \left\{ \int_\Omega f g \, d\mu \colon g\in L_0^+(\mu),\, \rho(g)\le 1\right\}.
\]
The gauge $\rho^*$ is a function norm \cite{BennettSharpley1988}*{Theorem  2.2}, and the dual pairing
\begin{equation}\label{eq:DualMap}
 \langle g, f \rangle=\int_\Omega g(\omega) \, f(\omega) \, d\mu(\omega), \quad g\in\LL_{\rho^*},\, f\in\LL_{\rho}.
\end{equation}
defines an isometric embedding of $\LL_{\rho^*}$ into $(\LL_{\rho})^*$  \cite{BennettSharpley1988}*{Theorem  2.9}. Moreover, this embedding is onto if and only if $\rho$ is absolutely continuous \cite{BennettSharpley1988}*{Corollary 4.2}. Since $(\rho^*)^*=\rho$ \cite{BennettSharpley1988}*{Theorem  2.9},  $\LL_\rho$ is a reflexive Banach space if and only if both $\rho$ and $\rho^*$ are absolutely continuous. 

Given $f\in L_0(\mu)$ we set 
\[
\supp(f)=\Omega \setminus f^{-1}(0).
\]
To be precise, since we are identifying functions that differ in a null set, $\supp(f)$ is an equivalence class of measurable sets. The underlying equivalence relation is the following: $A\sim B$ if $\mu(A\ominus B)=0$. Notice that this identification makes $\Sigma$ endowed with the distance
\[
d_\mu(A,B)=\mu(A\ominus B), \quad A,\,B\in\Sigma.
\]
a metric space. 

Given $A\in\Sigma$ we set
\[
\LL_\rho[A]:=\{ f\in \LL_{\rho} \colon \supp(f) \subseteq A\}.
\] 
We look at the next result from the view that if $\rho$ is absolutely continuous, then $\LL_{\rho^*}$ is the dual space of $\LL_\rho$, and $\LL_\rho$ is a subspace of $(\LL_{\rho^*})^*$.

\begin{proposition}\label{prop:WSC}
Let $\rho$ be an absolutely continuous function norm over a $\sigma$-finite measure $(\Omega,\Sigma,\mu)$. Then, for each $A\in\Sigma$, $\LL_\rho[A]$ is $\wstartop$closed.
\end{proposition}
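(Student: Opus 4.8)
The plan is to use the duality set-up recorded just before the statement: since $\rho$ is absolutely continuous, $\LL_\rho = (\LL_{\rho^*})^*$ via the pairing \eqref{eq:DualMap}, and hence the $w^*$-topology on $\LL_\rho$ is the $\sigma(\LL_\rho, \LL_{\rho^*})$-topology. To show $\LL_\rho[A]$ is $w^*$-closed, the most economical route is the Krein--Smulian theorem: a convex subset of a dual Banach space is $w^*$-closed if and only if its intersection with every closed ball is $w^*$-closed. Since $\LL_\rho[A]$ is a linear subspace, it is certainly convex, so it suffices to check that $B \cap \LL_\rho[A]$ is $w^*$-closed for each closed ball $B$ of $\LL_\rho$; and because bounded $w^*$-topologies are often metrizable (here $\LL_{\rho^*}$ is separable when $\rho^*$ is absolutely continuous, but we should not assume that), I would instead argue directly with nets, or better, avoid Krein--Smulian altogether.

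The cleaner approach: exhibit $\LL_\rho[A]$ as an intersection of kernels of $w^*$-continuous functionals. A function $f \in \LL_\rho$ lies in $\LL_\rho[A]$ precisely when $f = f\chi_A$ $\mu$-a.e., i.e. when $\langle g, f\rangle = \langle g\chi_{\Omega\setminus A}, f\rangle = 0$ for every $g \in \LL_{\rho^*}$ supported in $\Omega\setminus A$ — or even more simply, when $\langle g, f\rangle = 0$ for every $g$ of the form $g = h\chi_{E}$ with $E \subseteq \Omega\setminus A$, $\mu(E) < \infty$, and $h$ bounded. Each such $g$ belongs to $\LL_{\rho^*}$ by property (F.g) applied to $\rho$ together with the fact that bounded functions on finite-measure sets lie in $\LL_{\rho^*}$ (this follows from (F.e) for $\rho^*$). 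For each such $g$, the map $f \mapsto \langle g, f\rangle$ is exactly the evaluation of $f$ at the element $g \in \LL_{\rho^*}$ in the predual, hence $w^*$-continuous by definition of the $w^*$-topology. Therefore
\[
\LL_\rho[A] = \bigcap_{g} \Ker\big(f \mapsto \langle g, f\rangle\big),
\]
an intersection of $w^*$-closed sets, hence $w^*$-closed.

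The one point that needs a genuine (if short) argument is the claim that $f\chi_{\Omega\setminus A} = 0$ $\mu$-a.e. already follows from $\langle g, f\rangle = 0$ for all such test functions $g$. Here I would use $\sigma$-finiteness to write $\Omega = \bigcup_n \Omega_n$ with $\mu(\Omega_n) < \infty$, and for fixed $n$ test against $g = \sgn(\overline{f})\,\chi_{(\Omega\setminus A)\cap\Omega_n}$ (or the real/imaginary-part version in the complex case), which is bounded and supported on a set of finite measure, hence in $\LL_{\rho^*}$; this forces $\int_{(\Omega\setminus A)\cap\Omega_n} \abs{f}\, d\mu = 0$, and letting $n \to \infty$ gives $f = f\chi_A$ a.e. The main (minor) obstacle is simply bookkeeping the membership $g \in \LL_{\rho^*}$ and making sure the pairing \eqref{eq:DualMap} is the one that implements the predual duality; both are immediate from the results quoted from \cite{BennettSharpley1988} in the Duality subsection. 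No compactness or Krein--Smulian input is actually required.
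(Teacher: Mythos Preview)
Your approach is essentially the same as the paper's: both exhibit $\LL_\rho[A]$ as the set of $f\in\LL_\rho$ annihilated by every $g\in\LL_{\rho^*}$ supported in $\Omega\setminus A$, hence as an intersection of kernels of $w^*$-continuous functionals, and both verify the nontrivial inclusion by testing against a bounded function of the form $\overline{\sgn f}\,\chi_B$ with $B\subseteq\Omega\setminus A$ of finite measure. One small slip: you write $\LL_\rho=(\LL_{\rho^*})^*$, but absolute continuity of $\rho$ only gives the embedding $\LL_\rho\hookrightarrow(\LL_{\rho^*})^*$ (equality would require $\rho^*$ absolutely continuous as well); this does not affect the argument, since all you use is that each $g\in\LL_{\rho^*}$ defines a $w^*$-continuous functional on $\LL_\rho$.
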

\begin{proof} 
It suffices to prove that
\[
\LL_\rho[A]=\YY:=\{f\in\LL_\rho \colon \langle g, f \rangle=0 \textnormal{ for all } g\in\LL_{\rho^*}[\Omega\setminus A]\}.
\]
It is clear that $\LL_\rho[A]\subseteq \YY$. To prove that the reserve inclusion also holds, pick $f\in\XX\setminus\LL_\rho[A]$. Then, there is $B\in\Sigma$ such that $B\cap A=\emptyset$, $0<\mu(B)<\infty$, and $f(\omega)\not=0$ for every $\omega\in B$. If $g\colon\Omega\to\FF$ is such that $g f=\abs{f} \chi_B$, then $g\in \LL_{\rho^*}[\Omega\setminus A]$, and 
\[
\langle g, f \rangle=\int_\Omega \abs{f}\,d\mu>0.
\]
Hence $f\not\in\YY$.
\end{proof}

\subsection{The role of the measure space}
Given a $\sigma$-finite measure space  $(\Omega,\Sigma,\mu)$ we set
\[
\Sigma_\mu=\{A\in\Sigma \colon \mu(A)<\infty\}.
\]
We say that $\mu$ is \emph{separable} if the metric space $(\Sigma_\mu,d_\mu)$ is. It is known that, given a function norm $\rho$ over $(\Omega,\Sigma,\mu)$, the function space $\LL_\rho$ is separable  if and only if $\rho$ is absolutely continuous and $\mu$ is separable  (see \cite{BennettSharpley1988}*{Corollary 5.6}).

Let $(A,\Sigma_A,\mu_A)$ be the restriction of a $\sigma$-finite measure $(\Omega,\Sigma,\mu)$ to a set $A\in\Sigma$.
Given a function  norm $\rho$ over $(\Omega,\Sigma,\mu)$, let $\rho_A$ be its restriction to $A$, that is,
\[
\rho_A(f)=\rho(\tilde{f}), \quad f\in L_0^+(\mu_A),
\]
where $\tilde{f}$ is the extension of $f$ given by $\tilde{f}(\omega)=0$ for all $\omega\in\Omega\setminus A$.  In this terminology, we identify $\LL_{\rho_A}$ with $\LL_\rho[A]$. The mapping 
\[
P_A\colon \LL_\rho \to \LL_\rho, \quad f\mapsto f\chi_A,
\]
is a bounded linear projection onto $\LL_\rho[A]$ whose complementary projection is $P_{\Omega\setminus A}$. Consequently, we have a canonical lattice isomorphism from $\LL_\rho$ onto  $\LL_\rho[A]\oplus \LL_\rho[\Omega\setminus A]$. This isomorphism allows us to split any function space into its atomic and nonatomic parts. Indeed, for any $\sigma$-finite measure space $(\Omega,\Sigma,\mu)$ there is a partition of $\Omega$ into two measurable sets  $\Omega_c$ and $\Omega_a$ such that  $\mu_{\Omega_c}$ is nonatomic and   $\mu_{\Omega_a}$ is purely atomic. Hence, if $\rho_c=\rho_{\Omega_c}$ and  $\rho_a=\rho_{\Omega_a}$,
\[
\LL_\rho \simeq  \LL_{\rho_c} \oplus \LL_{\rho_a}.
\]
The Banach lattice  $\LL_{\rho_a}$ is isometrically isomorphic to a function space over a countable set endowed with the counting measure. As far as  $(\Omega_c,\Sigma,\mu)$ is concerned, we note that  nonatomic separable measure spaces are isomorphic to Lebesgue measure over the real line (see \cite{HvN42}*{Theorem 1}).

Given a Banach space $\XX$, the property that $\XX$ is  isomorphic to its square and to its hyperplanes is as natural as elusive to check in some situations. Next, relying on the boundedness of averaging projections, we prove that rearrangement invariant function spaces have this property. A function norm $\rho$ is said to be \emph{rearrangement invariant} if $\rho(f)=\rho(g)$ whenever $f$ and $g$ are equimeasurable.

\begin{theorem}[see, e.g., \cite{LinTza1979}*{Theorem 2.a.4}]\label{thm:Averaging}
Let $\rho$ be a rearrangement invariant function norm over a real interval $I$ endowed with the Lebesgue measure. 
Let $(A_n)_{n=1}^\infty$ by pairwise disjoint Borel subsets of $I$. Then, the mapping
\[
f\mapsto \sum_{n=1}^\infty \frac{\chi_{A_n}}{\abs{A_n}}  \int_{A_n} f(x)\, dx
\]
is a bounded linear projection from $\LL_{\rho}$ onto  $[\chi_{A_n} \colon n\in\NN]$.
\end{theorem}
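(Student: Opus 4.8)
The plan is to prove the operator-norm bound for the finite truncations
\[
P_Nf:=\sum_{n=1}^N\frac{\chi_{A_n}}{\abs{A_n}}\int_{A_n}f
\]
of the operator $P$ in the statement by exhibiting $P_N$ as an average of isometries of $\LL_\rho$, and then to let $N\to\infty$ by invoking the Fatou property \ref{FN:Fatou}. Throughout we may assume $0<\abs{A_n}<\infty$ for every $n$, since the terms with $\abs{A_n}\in\{0,\infty\}$ are vacuous in $\LL_\rho$.

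Fix $N$. For each $n\le N$, use \cite{HvN42}*{Theorem~1} to pick a measure-preserving bijection $\psi_n\colon A_n\to[0,\abs{A_n})$, and for $t\in[0,\abs{A_n})$ let $r_{n,t}$ be the rotation $x\mapsto(x+t)\bmod\abs{A_n}$ of $[0,\abs{A_n})$. Let $S=\prod_{n=1}^N[0,\abs{A_n})$ carry its normalized Lebesgue probability measure $\nu$, and to each $s=(s_n)_{n=1}^N\in S$ associate the bijection $\sigma_s\colon I\to I$ that acts on each $A_n$ ($n\le N$) as $\psi_n^{-1}\circ r_{n,s_n}\circ\psi_n$ and as the identity on $I\setminus\bigcup_{n\le N}A_n$. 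Every $\sigma_s$ is measure preserving, so $f\mapsto f\circ\sigma_s$ is a surjective isometry of $\LL_\rho$, because $\rho$ is rearrangement invariant. A change of variables (rotation averaging on $[0,\abs{A_n})$ transported through $\psi_n$, together with Fubini's theorem) shows that, for every nonnegative measurable $f$ on $I$ with $\rho(f)<\infty$,
\[
\int_S f\circ\sigma_s\,d\nu(s)=P_Nf+f\,\chi_{I\setminus\bigcup_{n\le N}A_n}\qquad\text{a.e.}
\]
Since $\abs{P_Nf}\le P_N\abs{f}$ for every $f\in\LL_\rho$, and $P_N\abs{f}$ and $\abs{f}\chi_{I\setminus\bigcup_{n\le N}A_n}$ are disjointly supported, applying this identity to $\abs{f}$ gives $\abs{P_Nf}\le\int_S\abs{f}\circ\sigma_s\,d\nu(s)$ a.e.; hence, by monotonicity of $\rho$, the integral triangle inequality for $\rho$ (a standard consequence of the subadditivity \ref{FN:PAditive} and the Fatou property \ref{FN:Fatou}), and rearrangement invariance,
\[
\norm{P_Nf}_\rho\le\rho\Bigl(\int_S\abs{f}\circ\sigma_s\,d\nu(s)\Bigr)\le\int_S\rho\bigl(\abs{f}\circ\sigma_s\bigr)\,d\nu(s)=\rho(\abs{f})=\norm{f}_\rho.
\]
Thus $\norm{P_N}\le1$ for every $N$.

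Now let $N\to\infty$. For $f\in\LL_\rho$, each average $\abs{A_n}^{-1}\int_{A_n}f$ is finite by \ref{FN:L1B}, so $Pf=\sum_n\abs{A_n}^{-1}\chi_{A_n}\int_{A_n}f$ is a well-defined measurable function, and since the $A_n$ are pairwise disjoint we have $\abs{P_Nf}\uparrow\abs{Pf}$ pointwise a.e. By \ref{FN:Fatou}, $\rho(\abs{Pf})=\lim_N\norm{P_Nf}_\rho\le\norm{f}_\rho$, so $P$ is a well-defined bounded linear operator with $\norm{P}\le1$. It is a projection: on each $A_m$ the function $Pf$ equals the constant $\abs{A_m}^{-1}\int_{A_m}f$, so $P(Pf)$ agrees with $Pf$ on every $A_m$, while both vanish off $\bigcup_nA_n$; hence $P^2=P$. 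Finally $P\chi_{A_m}=\chi_{A_m}$, so the range of $P$ — closed, $P$ being a bounded idempotent — contains $[\chi_{A_n}\colon n\in\NN]$; the reverse inclusion amounts to the norm convergence $P_Nf\to Pf$, valid whenever $\rho$ is absolutely continuous, since then $\abs{Pf-P_Nf}=\abs{Pf}\,\chi_{\bigcup_{n>N}A_n}\downarrow0$ forces $\norm{Pf-P_Nf}_\rho\to0$ by the $\sigma$-order continuity of $\LL_\rho$ (Theorem~\ref{thm:CharAbsCont}).

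The only substantial point is the bound $\norm{P_N}\le1$: it is the sole place where rearrangement invariance is used in an essential way, and the device that delivers it is the representation of $P_N$ as a $\nu$-average of norm-one composition operators over the (transported) rotation groups of the sets $A_n$. An alternative would be to appeal to Calder\'on's theorem — an operator that is simultaneously an $L_1$- and an $L_\infty$-contraction, as $P_N$ is (being the composition of a conditional expectation with multiplication by a characteristic function), maps each $f$ to a function that is Hardy--Littlewood--P\'olya submajorized by $f$, and rearrangement invariant function norms are monotone with respect to that submajorization — but this route relies on heavier external machinery.
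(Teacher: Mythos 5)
Your argument is correct, and it is a genuinely different route from the paper's, which offers no proof at all but simply quotes \cite{LinTza1979}*{Theorem 2.a.4}; the classical proof behind that citation is the one you sketch at the end (the averaging operator is simultaneously an $L_1$- and $L_\infty$-contraction, hence acts contractively on every rearrangement invariant space by Calder\'on-type submajorization). What your proof buys is self-containedness within the paper's toolkit: writing the truncation $P_N$ (plus the identity off $\bigcup_{n\le N}A_n$) as a $\nu$-average of composition isometries built from rotations transported by measure isomorphisms (\cite{HvN42}*{Theorem 1}, already invoked in the paper), you reduce the norm bound to rearrangement invariance, an integral Minkowski inequality for $\rho$, and the Fatou property \ref{FN:Fatou}, obtaining the sharp bound $\norm{P}\le 1$ with no interpolation machinery. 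One small repair: the inequality $\rho\bigl(\int_S g_s\,d\nu\bigr)\le\int_S\rho(g_s)\,d\nu$ is not a formal consequence of \ref{FN:PAditive} and \ref{FN:Fatou} as you state it; the clean derivation is via the Lorentz--Luxemburg duality $(\rho^*)^*=\rho$ (quoted in Section~\ref{sec:lattices} from \cite{BennettSharpley1988}), writing $\rho(h)=\sup\{\int_I hg\,d\mu\colon\rho^*(g)\le1\}$ for $h\ge0$ and using Tonelli together with $\rho(\abs{f}\circ\sigma_s)=\rho(\abs{f})$.

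Your caveat about the ``onto'' clause is well taken and is a comment on the statement rather than a gap in your proof: without absolute continuity the range of $P$ is the subspace of all functions in $\LL_\rho$ that are constant on each $A_n$ and vanish off $\bigcup_n A_n$, which can be strictly larger than $[\chi_{A_n}\colon n\in\NN]$ (in $L_\infty[0,1]$ with infinitely many $A_n$ the closed span is a copy of $c_0$ while the range is a copy of $\ell_\infty$); this larger fixed-point subspace is in fact what Proposition~\ref{prop:RI}\ref{it:IsoPlane} uses, and the two subspaces coincide exactly in the order-continuous case, by the argument you give via Theorem~\ref{thm:CharAbsCont}. So your proof delivers everything the paper actually needs: the norm-one bound, idempotency, that the range contains the closed span of the $\chi_{A_n}$, and equality of the two under absolute continuity.
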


\begin{proposition}\label{prop:RI}
Let $\rho$ be a rearrangement invariant function norm over a real interval $I$ endowed with the Lebesgue measure. 
\begin{enumerate}[label=(\roman*), leftmargin=*, widest=iii]
\item\label{it:LIB} If $I$ is bounded,  then $\LL_\rho$ is lattice isomorphic to $\LL_\rho[J]$ for every open set $J\subseteq I$.
\item\label{it:LIU}  If $I$ is unbounded,  then $\LL_\rho$ is isometrically lattice isomorphic to $\LL_\rho[J]$ for every open set $J\subseteq I$ with $\abs{J}=\infty$.
\item\label{it:IsoSquare}  $\LL_\rho$ is lattice isomorphic to its square $\LL_\rho\oplus\LL_\rho$.
\item\label{it:IsoPlane}   $\LL_\rho$ is isomorphic to $\LL_\rho\oplus\FF$.
\end{enumerate}
\end{proposition}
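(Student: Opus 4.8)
The plan is to prove the four items in the order (ii), (i), (iii), (iv); parts (i) and (ii) carry the content, while (iii) and (iv) follow formally. Throughout I would rely on the elementary remark that if $\sigma\colon A\to B$ is a measure-preserving bijection (modulo null sets) between measurable subsets of $I$, then $f\mapsto f\circ\sigma^{-1}$, extended by $0$ off $B$, is an isometric lattice isomorphism of $\LL_\rho[A]$ onto $\LL_\rho[B]$: being a composition operator it is positive and disjointness preserving, and since it preserves distribution functions it preserves $\rho$ by rearrangement invariance. For (ii), an unbounded interval has infinite Lebesgue measure, so $(I,|\cdot|)$ and $(J,|\cdot|)$ are nonatomic, $\sigma$-finite, separable measure spaces of total mass $\infty$, hence mutually measure isomorphic by the classification recorded just after Theorem~\ref{thm:Averaging}. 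Applying the remark to any measure-preserving bijection $\sigma\colon I\to J$ yields the asserted isometric lattice isomorphism $\LL_\rho\simeq\LL_\rho[J]$; note that only $|J|=\infty$ is used, not that $J$ is open.

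For (i) I would first reduce, by a translation of $I$ and then by the remark above, to proving that $\LL_\rho$ over $(0,L)$ (with $L<\infty$) is lattice isomorphic to $\LL_\rho[(0,cL)]$, where $c:=|J|/L\in(0,1]$; the case $c=1$ is trivial. For $c\in(0,1)$ the new ingredient is the \emph{dilation operator}: put $(D_c f)(t)=f(t/c)$ for $0<t<cL$ and $(D_c f)(t)=0$ for $cL\le t<L$. Then $D_c$ is positive, disjointness preserving, and a bijection of $\LL_\rho$ onto $\LL_\rho[(0,cL)]$ with inverse $g\mapsto\bigl(s\mapsto g(cs)\bigr)$. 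Because $(D_c f)^{*}$ is $f^{*}$ horizontally compressed by the factor $c$, one has $(D_c f)^{*}\le f^{*}$ pointwise, hence $\|D_c f\|_\rho\le\|f\|_\rho$; and both $D_c$ and $D_c^{-1}$ are dilation operators, and dilation operators act boundedly on every rearrangement invariant function space -- a classical fact that ultimately rests on the Hardy--Littlewood--P\'olya majorization; see \cite{BennettSharpley1988}. Thus $D_c$ is a lattice isomorphism of $\LL_\rho$ onto $\LL_\rho[(0,cL)]$, which in turn is isometrically lattice isomorphic to $\LL_\rho[J]$ by the remark. This is the step where rearrangement invariance is genuinely used, beyond mere equimeasurability.

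Part (iii) follows at once: if $I$ is bounded, write it as a disjoint union of two subintervals $I_1,I_2$, so that $\LL_\rho\simeq\LL_\rho[I_1]\oplus\LL_\rho[I_2]$ with both summands lattice isomorphic to $\LL_\rho$ by (i); if $I$ is unbounded, split $I$ into two measurable sets of infinite measure and use (ii). For (iv) it suffices to produce a complemented subspace $V$ of $\LL_\rho$ with $V\simeq V\oplus\FF$, for then, writing $\LL_\rho=V\oplus W$, we get $\LL_\rho\oplus\FF\simeq(V\oplus\FF)\oplus W\simeq V\oplus W=\LL_\rho$. I would choose pairwise disjoint sets $A_n\subseteq I$ with $|A_{n+1}|=\tfrac12|A_n|$ (taking $|A_1|$ small enough that they fit inside $I$) and set $V=[\chi_{A_n}\colon n\in\NN]$, which is complemented by Theorem~\ref{thm:Averaging} and has the disjointly supported sequence $(\chi_{A_n})_n$ as an unconditional basis, so that $V=\FF\chi_{A_1}\oplus V'$ with $V'=[\chi_{A_n}\colon n\ge2]$. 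The dilation $D_{1/2}$ carries $V$ isomorphically onto $[\chi_{\tfrac12 A_n}\colon n\in\NN]$, and since $|\tfrac12 A_n|=|A_{n+1}|$, the remark (applied to a measure-preserving bijection sending each $\tfrac12 A_n$ onto $A_{n+1}$) identifies this space isometrically with $V'$. Hence $V\simeq V'$, so $V=\FF\chi_{A_1}\oplus V'\simeq\FF\oplus V$, i.e. $V\simeq V\oplus\FF$, as required.

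I expect the real obstacle to be the bounded case of (i). Trying to build the isomorphism by matching up measurable pieces of $(0,L)$ and $(0,cL)$ is hopeless because of the mismatch of total measure -- the same obstruction that makes $L_p(0,1)$ and $L_p(0,\infty)$ non-isomorphic when $p\ne2$. What rescues the argument is exactly that, although a rearrangement invariant norm need not be dilation invariant, the dilation operators are nonetheless bounded on it in both directions; recognizing that this, rather than any measure-theoretic surgery, is the point seems to me the heart of the matter.
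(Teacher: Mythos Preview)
Your proof is correct and follows essentially the same route as the paper: both establish (i) via boundedness of dilation operators on rearrangement-invariant spaces (the paper proves this inline by decomposing a stretched simple function into finitely many equimeasurable copies, whereas you cite \cite{BennettSharpley1988}), and both prove (iv) by exhibiting a complemented span of characteristic functions over a geometrically decreasing sequence of sets and observing it is shift-isomorphic to its codimension-one subspace. One correction to your closing commentary, however: $L_p(0,1)$ and $L_p(0,\infty)$ \emph{are} isomorphic for every $1\le p<\infty$ (see \cite{AlbiacKalton2016}*{Chapter~6}), so the ``obstruction'' you invoke does not exist for $L_p$---though, as the paper itself later shows for certain Orlicz spaces, the distinction between the bounded and unbounded models can genuinely arise in other rearrangement-invariant settings.
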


\begin{proof}
To  prove \ref{it:LIB} and \ref{it:LIU}, we suppose the $J\not= I$. Pick $M=N=1$ in the unbounded set, and
$M=\floor{\abs{I}/\abs{J}}$ and $N=\ceil{\abs{I}/\abs{J}}$ in the  bounded case.  In both cases, let $T\colon J \to I$ be a measurability-preserving bijection such that
\[
M\abs{A} \le \abs{T(A)}\le N \abs{A}, \quad A \mbox{ measurable.}
\]
Given a simple function $f\colon J\to [0,\infty]$, there are  functions $(f_j)_{j=1}^N$ equimeasurable with $f$ such that 
\[
 \sum_{j=1}^{M} f_j \le T(f)\le \sum_{j=1}^N f_j,
\]
where $T$ is the linear map given by $\chi_A\mapsto\chi_{T(A)}$.
Consequently, 
\[
\rho(f) \le \rho(T(f))\le N\rho(f).
\]
We infer that $T$ extends to a lattice isomorphism from $\LL_\rho$ onto $\LL_\rho[J]$.

To prove \ref{it:IsoSquare} we pick a partition $(J_1,J_2)$ of $I$ into subsets as in \ref{it:LIB} or \ref{it:LIU}. We have 
\[
\LL_\rho\simeq \LL_\rho[J_1] \oplus \LL_\rho[J_2]\simeq \LL_\rho\oplus \LL_\rho.
\]

Since, regardless $I$ is bounded or unbounded, $\LL_\rho(I) \simeq \LL_\rho(I_1)\oplus  \LL_\rho(I_2)$ with $I_1$ bounded, it suffices to prove \ref{it:IsoPlane} in the case when $I$ is bounded. For notational ease, set $I=[0,1)$. Let $\XX$ (resp., $\YY$) be the subspace of $\LL_\rho$ (resp.,  $\LL_\rho([0,1/2))$)  consisting of all functions that are constant in each interval $[2^{-n-1},2^{-n})$, $n\in\NN\cup\{0\}$ (resp., $n\in\NN$). The aforementioned isomorphism $T$ gives $\XX\simeq\YY$. Moreover, $\XX\simeq\YY\oplus\FF$. Let $\UU$ be the space of $\LL_\rho$ consisting of all functions with null integral in each interval $[2^{-n-1},2^{-n})$, $n\in\NN\cup\{0\}$. By Theorem~\ref{thm:Averaging}, $\LL_\rho\simeq\UU \oplus \XX$. Piecing the bits together, we obtain
\[
\LL_\rho\simeq \UU \oplus \XX \simeq \UU\oplus \YY \oplus \FF \simeq\UU \oplus \XX\oplus \FF\simeq \LL_\rho \oplus \FF.\qedhere
\]
\end{proof}

\section{Unconditional basic sequences}\label{sec:UBS}\noindent
It is known that an unconditional basic sequence $\XB=(\xx_n)_{n=1}^\infty$ in a Banach space $\LL$ induces an  atomic lattice structure on its closed linear span. To be precise,  there is a constant $C$ such that 
\begin{equation}\label{eq:UncLat}
\norm{\sum_{n=1}^\infty a_n \, \xx_n} \le C \norm{\sum_{n=1}^\infty  b_n \, \xx_n}, \quad \abs{a_n}\le\abs{b_n}, \,  (b_n)_{n=1}^\infty \in c_{00}.
\end{equation}
(see, e.g., \cite{LinTza1977}*{Proposition 1.c.7}). If \eqref{eq:UncLat} holds for a given constant $C$ we say that $\XB$ is $C$-unconditional. Note that \eqref{eq:UncLat} yields 
\begin{equation}\label{eq:UncRandom}
\norm{\sum_{n=1}^\infty a_n \, \xx_n}\approx \left(\Ave_{\varepsilon_n =\pm 1} \norm{ \sum_{n=1}^\infty \varepsilon_n \, a_n \,\xx_n}^p\right)^{1/p}, \quad (a_n)_{n=1}^\infty \in c_{00}
\end{equation}
for any  $0<p<\infty$. In turn, if the  Banach lattice $\LL$ has cotype $p<\infty$, then, by Khintchine's inequalities, 
\begin{equation}\label{eq:CotpyeRandom}
 \Ave_{\varepsilon_n =\pm 1} \norm{ \sum_{n=1}^\infty \varepsilon_n \, f_n} \lesssim
\norm{\left( \sum_{n=1}^\infty \abs{f_n}^2 \right)^{1/2}} \lesssim \left( \Ave_{\varepsilon_n =\pm 1} \norm{ \sum_{n=1}^\infty \varepsilon_n \, f_n}^p \right)^{1/p}
 \end{equation}
for $(f_n)_{n=1}^\infty \in c_{00}(\LL)$.
Combining \eqref{eq:UncRandom} with  \eqref{eq:CotpyeRandom} allows us to relate the lattice structure induced by $\XB$ to that in $\LL$. Namely, if $\LL$  has nontrivial cotype we have
\begin{equation*}
\norm{\sum_{n=1}^\infty a_n \, \xx_n}\approx \norm{ \left(\sum_{n=1}^\infty \abs{a_n}^2\abs{ \xx_n }^2 \right)^{1/2}}, \quad 
 (a_n)_{n=1}^\infty \in c_{00}.
\end{equation*}
Oddly enough, this estimate still holds if we drop the assumption that  $\LL$ has nontrivial cotype and, in return, we impose $\XB$  to be complemented. To state this result in  a precise way, it will be convenient  to use the constants involved in complementability. A subspace $\XX$ of a Banach space $\UU$ is complemented if and only if there is a bounded linear projection $P\colon\UU\to \UU$ with $P(\UU)=\XX$. If $C\in(0,\infty)$ is such that $\norm{P}\le C$ for a suitable such projection we say that $\XX$ is $C$-complemented.

\begin{theorem}[see \cite{LinTza1979}*{Proposition 1.d.6} and subsequent  Remark~1]\label{thm:Kahane}
Let $\XB=(\xx_n)_{n=1}^\infty$ be a complemented unconditional basic sequence in a Banach lattice $\LL$. Then, there is a constant $C$ such that
\[
\frac{1}{C}\norm{\sum_{n=1}^\infty a_n \, \xx_n}
\le \norm{ \left(\sum_{n=1}^\infty \abs{a_n}^2\abs{ \xx_n }^2 \right)^{1/2}}
\le C\norm{\sum_{n=1}^\infty a_n \, \xx_n} 
\]
for all $(a_n)_{n=1}^\infty\in c_{00}$. Moreover, if $\XB$  is $C_1$-unconditional and $[\XB]$ is $C_2$-complemented, then $C$  only depends on $C_1$ and $C_2$.
\end{theorem}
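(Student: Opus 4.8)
The statement is the conjunction of two estimates, of which only the right-hand one is elementary and holds in any Banach lattice; the left-hand one is where complementability is needed. The plan is to handle them separately.

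For the upper estimate I would invoke the ``easy half'' of \cite{LinTza1979}*{Proposition 1.d.6}: in any Banach lattice and for any finite family $(g_n)$ one has
\[
\norm{\left(\sum_n \abs{g_n}^2\right)^{1/2}}\le \sqrt{2}\,\Ave_{\varepsilon_n=\pm1}\norm{\sum_n \varepsilon_n\, g_n},
\]
which is just the scalar Khintchine inequality $(\sum_n\abs{c_n}^2)^{1/2}\le\sqrt2\,\Ave_\varepsilon\abs{\sum_n\varepsilon_n c_n}$ read pointwise through Krivine's functional calculus, followed by the triangle inequality for the convex average $\Ave_\varepsilon$. Applying this to $g_n=a_n\,\xx_n$, so that $(\sum_n\abs{g_n}^2)^{1/2}=(\sum_n\abs{a_n}^2\abs{\xx_n}^2)^{1/2}$, and then using that $\XB$ is $C_1$-unconditional (hence $\norm{\sum_n\varepsilon_n a_n\xx_n}\le C_1\norm{\sum_n a_n\xx_n}$ for every sign choice), gives the right-hand inequality with constant $\sqrt2\,C_1$; complementability plays no role here.

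For the lower estimate, fix $(a_n)\in c_{00}$ supported on $\{1,\dots,N\}$, write $f=\sum_n a_n\xx_n$ and $h=(\sum_n\abs{a_n}^2\abs{\xx_n}^2)^{1/2}\in\LL$, and let $P\colon\LL\to Y:=[\xx_n\colon n\in\NN]$ be a projection with $\norm{P}\le C_2$. Let $(\bphi_n)$ be the coordinate functionals of $\XB$ in $Y^*$ and set $\bpsi_n=\bphi_n\circ P\in\LL^*$, so $\langle\bpsi_n,\xx_m\rangle=\delta_{n,m}$. Pick a norming functional $g\in\LL^*$ with $\norm{g}\le1$ and $\langle g,f\rangle=\norm{f}$, and put $b_n=\langle g,\xx_n\rangle$, so that $\norm{f}=\sum_n a_n b_n\le\sum_n\abs{a_n}\,\abs{b_n}$. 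Since $\LL^*$ is a Banach lattice and $\langle\abs{\bpsi_n},\abs{\xx_n}\rangle\ge\abs{\langle\bpsi_n,\xx_n\rangle}=1$, the Cauchy--Schwarz inequality for the $\ell_2$-square function (valid in any Banach lattice and its dual, again a consequence of Krivine's calculus) yields
\[
\norm{f}\le\sum_n\left\langle \abs{b_n\bpsi_n},\abs{a_n\xx_n}\right\rangle\le\left\langle\left(\sum_n\abs{b_n}^2\abs{\bpsi_n}^2\right)^{1/2},\left(\sum_n\abs{a_n}^2\abs{\xx_n}^2\right)^{1/2}\right\rangle\le\norm{\left(\sum_n\abs{b_n}^2\abs{\bpsi_n}^2\right)^{1/2}}_{\LL^*}\norm{h}.
\]

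It remains to bound the dual square function $\norm{(\sum_n\abs{b_n}^2\abs{\bpsi_n}^2)^{1/2}}_{\LL^*}$ by $C_1 C_2$; this is the step that fuses unconditionality and complementability, and is the point I expect to be the main obstacle to locate rather than to verify. Running the ``easy half'' of \cite{LinTza1979}*{Proposition 1.d.6} now in the Banach lattice $\LL^*$ gives $\norm{(\sum_n\abs{b_n}^2\abs{\bpsi_n}^2)^{1/2}}_{\LL^*}\le\sqrt2\,\Ave_{\varepsilon}\norm{\sum_{n=1}^N\varepsilon_n b_n\bpsi_n}_{\LL^*}$. For each sign vector $\varepsilon$ one checks that $\sum_{n=1}^N\varepsilon_n b_n\bpsi_n=(D_\varepsilon^{*}(g|_Y))\circ P$, where $D_\varepsilon\colon Y\to Y$ is the diagonal operator $\xx_m\mapsto\varepsilon_m\xx_m$ for $m\le N$ and $\xx_m\mapsto0$ for $m>N$: indeed, both functionals are of the form $\psi\circ P$ with $\psi\in Y^*$ and take the same value on every $\xx_m$, hence agree on $Y$. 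Since $\XB$ is $C_1$-unconditional we have $\norm{D_\varepsilon}\le C_1$, and since $\norm{P}\le C_2$ and $\norm{g|_Y}\le\norm{g}\le1$ we get $\norm{\sum_{n=1}^N\varepsilon_n b_n\bpsi_n}_{\LL^*}\le C_1C_2$ uniformly in $\varepsilon$. Chaining the inequalities yields $\norm{f}\le\sqrt2\,C_1C_2\norm{h}$. Combined with the upper estimate, the theorem holds with $C=\sqrt2\,C_1C_2$, which depends only on $C_1$ and $C_2$. The only ingredients used as black boxes are the two lattice facts cited above, both resting on Krivine's functional calculus; conceptually, the content of Remark~1 following \cite{LinTza1979}*{Proposition 1.d.6} is precisely that complementability lets the dual ``easy half'' be applied to the transferred biorthogonal system $(\bphi_n\circ P)$, substituting for the missing finite cotype of $\LL$.
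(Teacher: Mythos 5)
Your proof is correct: the right-hand estimate via the general lattice Khintchine inequality (Krivine's calculus plus unconditionality), and the left-hand one via the transferred functionals $\bpsi_n=\bphi_n\circ P$, the lattice Cauchy--Schwarz duality for square functions, and the identification $\sum_n\varepsilon_n b_n\bpsi_n=(g|_Y\circ D_\varepsilon)\circ P$, is essentially the argument behind \cite{LinTza1979}*{Proposition 1.d.6} and its Remark~1, which the paper invokes without giving a proof. The only caveat is cosmetic: for complex scalars the pointwise Khintchine constant $\sqrt2$ (and hence your final $C=\sqrt2\,C_1C_2$) degrades to a slightly larger absolute constant, which does not affect the statement that $C$ depends only on $C_1$ and $C_2$.
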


A sequence $\XB=(\xx_n)_{n=1}^\infty$ in a Banach space $\UU$ is a complemented unconditional basic sequence if and only if there is a constant $C$ and  a sequence $\XB^*=(\xx_n^*)_{n=1}^\infty$ in $\UU^*$ such that $(\xx_n,\xx_n^*)_{n=1}^\infty$ is a biorthogonal system, and 
\[
\norm{ \sum_{n\in  A} \xx_n^*(f) \, \xx_n} \le C \norm{f}, \quad f\in\UU,\; \abs{A}<\infty.
\]
If this is the case, we say that $\XB^*$ is a sequence of \emph{projecting functionals} for $\XB$. Such a sequence is not unique, but all possible sequences of projecting functionals are obtained as 
\[
\xx_n^*=\zz_n^*\circ P=P^*(\zz_n^*), \quad n\in\NN,
\]
where $P$ is a projection from $\UU$ onto $\XX:=[\xx_n \colon n\in \NN]$, and $\ZB^*=(\zz_n^*)_{n=1}^\infty$ in $\XX^*$ are the coordinate functionals of $\XB$. Notice that $\XB^*$ and  $\ZB^*$ are equivalent.

Two sequences $\XB$ and $\YB$ in a Banach space $\UU$ are said to be \emph{congruent} if there is an automorphism $S$ of $\UU$ with $S(\xx_n)=\yy_n$ for all $n\in\NN$. Although the classification of basic sequences in Banach spaces is usually stated in terms of equivalence, congruence is a stronger condition that is convenient to record in some situations. Notice that if $\XB$ and $\YB$ are congruent, and  $\XX:=[\XB]$ is complemented in $\UU$, then $\YY:=[\YB]$ also is. In  fact, if $P\colon\UU\to\UU$ is a projection onto $\XX$, and $T\colon \UU\to\UU$ is an automorphism with $T(\XB)=\YB$, then $T\circ P\circ T^{-1}$ is a projection onto $\YY$. The following result establishes a partial converse of this fact. For broader applicability, we state it within the more general setting of quasi-Banach spaces. 

\begin{lemma}\label{lem:EquivCong}
Let $\XB=(\xx_n)_{n=1}^\infty$ and $\YB=(\yy_n)_{n=1}^\infty$ be sequences in a quasi-Banach space $\UU$, and let $\XX$ and $\YY$ denote the closed subspaces of $\UU$ generated by $\XB$ and $\YB$, respectively. Supppose that $\XB$ and $\YB$ are equivalent, that $\XX$ and $\YY$ are complemented in $\UU$, and that  $\UU/\XX\simeq\UU/\YY$. Then,  $\XB$ and $\YB$ are congruent. Moreover, if $T\colon \XX\to\YY$ is an isomorphism with $T(\xx_n)=\yy_n$ for all $n\in\NN$, given projections $P$ and $Q$ onto $\XX$ and $\YY$, respectively, we can choose and isomorphism $J\colon\UU\to\UU$ such that $J|_\XX=T$ and $T\circ P=Q\circ J$.
\end{lemma}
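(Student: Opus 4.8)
The plan is to build $J$ explicitly out of the direct-sum decompositions $\UU=\XX\oplus\Ker P=\YY\oplus\Ker Q$ coming from the projections $P$ and $Q$. Once such a $J$ is in hand, the first assertion follows at once by taking $S:=J$, since then $S$ is an automorphism of $\UU$ with $S(\xx_n)=T(\xx_n)=\yy_n$ for every $n\in\NN$; so I would concentrate on the ``moreover'' part.

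The first step is the standard observation that, for a complemented closed subspace $Z$ of a quasi-Banach space $\UU$ with associated projection $\Pi$, the restriction to $\Ker\Pi$ of the quotient map $\UU\to\UU/Z$ is a continuous linear bijection: it is injective because $\Ker\Pi\cap Z=\{0\}$, and surjective because $u-\Pi u\in\Ker\Pi$ represents $u+Z$ for every $u\in\UU$. Since $\Ker\Pi$ and $\UU/Z$ are complete, the open mapping theorem for F-spaces upgrades this bijection to an isomorphism $\Ker\Pi\simeq\UU/Z$. Applying this to $(\XX,P)$ and to $(\YY,Q)$ and invoking the hypothesis $\UU/\XX\simeq\UU/\YY$ yields $\Ker P\simeq\Ker Q$; I would fix an isomorphism $R\colon\Ker P\to\Ker Q$.

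Next I would define $J\colon\UU\to\UU$ by $J(u)=T(Pu)+R(u-Pu)$. This is well defined and linear, and bounded because it is a finite sum of bounded operators. Its inverse should be the symmetric expression $J^{-1}(v)=T^{-1}(Qv)+R^{-1}(v-Qv)$: for $u\in\UU$, the decomposition of $J(u)$ along $\UU=\YY\oplus\Ker Q$ is $T(Pu)\in\YY$ together with $R(u-Pu)\in\Ker Q$, so $Q(J(u))=T(Pu)$ and therefore $J^{-1}(J(u))=T^{-1}(T(Pu))+R^{-1}(R(u-Pu))=Pu+(u-Pu)=u$, and $J(J^{-1}(v))=v$ is verified identically using $\UU=\XX\oplus\Ker P$. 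For $u\in\XX$ one has $Pu=u$, hence $J(u)=T(u)$, i.e.\ $J|_\XX=T$; and the identity $Q(J(u))=T(Pu)$ established above is exactly $Q\circ J=T\circ P$.

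The only non-formal ingredient is the open mapping theorem for quasi-Banach spaces, which is what makes the passage from the complements $\Ker P$, $\Ker Q$ to the quotients legitimate and lets one choose $R$ bounded with bounded inverse; with the decomposition $\UU\simeq\XX\oplus\Ker P\simeq\YY\oplus\Ker Q$ at hand, everything else is routine linear algebra. I do not expect a genuine obstacle beyond keeping track of boundedness in the quasi-normed setting, where, if convenient, the Aoki--Rolewicz theorem lets one replace the quasi-norm by an equivalent $p$-norm.
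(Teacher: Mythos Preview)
Your proof is correct and follows essentially the same route as the paper: both build $J(u)=T(Pu)+S(u-Pu)$ from an isomorphism $S\colon\Ker P\to\Ker Q$, and both read off $Q\circ J=T\circ P$ and $J|_\XX=T$ directly. You supply a bit more detail than the paper does---the explicit passage from $\UU/\XX\simeq\UU/\YY$ to $\Ker P\simeq\Ker Q$ via the open mapping theorem, and the explicit inverse of $J$---but there is no substantive difference in approach.
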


\begin{proof}
The mappings $\Id_\UU-P$ and $\Id_\UU-Q$ are projections onto $\Ker(P)$, and  $\Ker(Q)$, respectively. By assumption, there is an isomorphism $S$ from $\Ker(P)$ onto  $\Ker(Q)$. The map
\[
u\mapsto J(u):=T(P(u))+ S(u-P(u))
\]
is an isomorphism from $\UU$ onto $\UU$, and we have $Q(J(u))=T(P(u))$ for all $u\in\UU$.
\end{proof}

\begin{corollary}\label{prop:SameFunctionals}
Let $\XB=(\xx_n)_{n=1}^\infty$ be $\YB=(\yy_n)_{n=1}^\infty$ be complemented unconditional basic sequences in a quasi-Banach space $\UU$. Let  $(\xx_n^*)_{n=1}^\infty$ and $(\yy_n^*)_{n=1}^\infty$ be projecting functionals for $\XB$ and $\YB$, respectively.
Suppose that $\XB$ and $\YB$ are equivalent and that there is $(\lambda_n)_{n=1}^\infty$ in $\FF$ such that  $\yy_n^*=\lambda_n \, \xx_n^*$ for all $n\in\NN$. Then, $\XB$  and $\YB$ are congruent. 
\end{corollary}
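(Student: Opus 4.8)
The plan is to deduce the statement from Lemma~\ref{lem:EquivCong}, so the whole task reduces to producing projections onto $\XX:=[\xx_n\colon n\in\NN]$ and $\YY:=[\yy_n\colon n\in\NN]$ with the same kernel; this yields $\UU/\XX\simeq\UU/\YY$, and the remaining hypotheses of the lemma (equivalence of $\XB$ and $\YB$, complementability of $\XX$ and $\YY$) are already in force.

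First I would observe that $\lambda_n\neq0$ for every $n$, since $\lambda_n\,\xx_n^*(\yy_n)=\yy_n^*(\yy_n)=1$. Next, using the description of the possible systems of projecting functionals recalled before Lemma~\ref{lem:EquivCong}, I would fix a projection $P\colon\UU\to\UU$ onto $\XX$ and the coordinate functionals $(\zz_n^*)_{n=1}^\infty\subseteq\XX^*$ of $\XB$ with $\xx_n^*=\zz_n^*\circ P$ for all $n$. Because the biorthogonal functionals of the basis $\XB$ of $\XX$ are total over $\XX$, we have $P(u)=0$ if and only if $\zz_n^*(P(u))=\xx_n^*(u)=0$ for every $n$; that is, $\Ker(P)=\bigcap_{n}\Ker(\xx_n^*)$. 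The same argument provides a projection $Q$ onto $\YY$ with $\Ker(Q)=\bigcap_{n}\Ker(\yy_n^*)$.

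Since $\lambda_n\neq0$, we have $\Ker(\yy_n^*)=\Ker(\lambda_n\,\xx_n^*)=\Ker(\xx_n^*)$ for every $n$, and therefore $\Ker(P)=\Ker(Q)$. As $P$ and $Q$ are bounded projections, $\UU=\XX\oplus\Ker(P)=\YY\oplus\Ker(Q)$ are topological direct sums, whence $\UU/\XX\simeq\Ker(P)=\Ker(Q)\simeq\UU/\YY$. Lemma~\ref{lem:EquivCong} now applies and gives that $\XB$ and $\YB$ are congruent. I do not expect any serious obstacle here: the only step requiring attention is the identity $\Ker(P)=\bigcap_n\Ker(\xx_n^*)$, which combines $\xx_n^*=\zz_n^*\circ P$ with the totality of the coordinate functionals, and no quasi-Banach difficulties arise because complementation already means a topological direct sum decomposition. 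In fact, knowing $\Ker(P)=\Ker(Q)$ one may take the auxiliary isomorphism in the proof of Lemma~\ref{lem:EquivCong} to be the identity, so the congruence can be written explicitly as $u\mapsto T(P(u))+u-P(u)$, where $T\colon\XX\to\YY$ realizes the equivalence of $\XB$ and $\YB$.
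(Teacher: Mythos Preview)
Your proposal is correct and follows essentially the same route as the paper: both arguments produce projections $P$ onto $\XX$ and $Q$ onto $\YY$ with $\Ker(P)=\bigcap_n\Ker(\xx_n^*)=\bigcap_n\Ker(\yy_n^*)=\Ker(Q)$, conclude $\UU/\XX\simeq\UU/\YY$, and invoke Lemma~\ref{lem:EquivCong}. The paper simply writes $P$ and $Q$ as the explicit series $P(f)=\sum_n \xx_n^*(f)\,\xx_n$ and $Q(f)=\sum_n \yy_n^*(f)\,\yy_n$ (which makes the kernel identity immediate), whereas you recover the same projections abstractly via $\xx_n^*=\zz_n^*\circ P$ and totality; your explicit remark that $\lambda_n\neq 0$ is left implicit in the paper.
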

\begin{proof}
Let  $P$ and $Q$ be the projections of $\UU$ onto $\XX:=[\XB]$ and $\YY:=[\YB]$, respectively, given by 
\[
P(f)=\sum_{n=1}^\infty \xx_n^*(f)\, \xx_n \quad \mbox{and}\quad Q(f)=\sum_{n=1}^\infty \yy_n^*(f)\, \yy_n,
\]
respectively. We have 
\[
\Ker(P)
=\cap_{n=1}^\infty \Ker(\xx_n^*)
=\cap_{n=1}^\infty \Ker(\yy_n^*)
=\Ker(Q).
\]
Consequently, $\UU/\XX\simeq\UU/\YY$. Hence, the result follows from Lemma~\ref{lem:EquivCong}.
\end{proof}

If $\XB$ is a complemented unconditional basic sequence in a quasi-Banach space $\UU$ with  projecting functionals  $\XB^*$, and $T$ is an automorphism of $\UU$, then $T(\XB)$ is a complemented unconditional basic sequence in $\UU$  projecting functionals $S^*(\XB^*)$, where $S$ is the inverse of $T$. The following result points in the opposite direction.

\begin{proposition}\label{rmk:congruence}
Let $\XB$  and $\YB$ be complemented unconditional basic sequences in a quasi-Banach space $\UU$. Let $\XB^*$ and $\YB^*$ coordinate functionals for $\XB$ and $\YB$, respectively. If $\XB$ and $\YB$ are congruent, then there is an automorphism $T$ of $\UU$ with $T(\XB)=\YB$ and $T^*(\YB^*)=\XB^*$.
\end{proposition}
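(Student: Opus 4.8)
The plan is to reduce the statement to the ``moreover'' part of Lemma~\ref{lem:EquivCong}. Write $\XX=[\XB]$ and $\YY=[\YB]$. Since $\XB$ and $\YB$ are congruent, there is an automorphism $S$ of $\UU$ with $S(\xx_n)=\yy_n$ for all $n$; by continuity $S(\XX)=\YY$, so $S$ restricts to an isomorphism $\XX\to\YY$ carrying $\xx_n$ to $\yy_n$ (in particular $\XB$ and $\YB$ are equivalent) and descends to an isomorphism $\UU/\XX\to\UU/\YY$. Together with the hypothesis that $\XX$ and $\YY$ are complemented in $\UU$, this shows that all the hypotheses of Lemma~\ref{lem:EquivCong} are met. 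This reduction---noticing that it is congruence, rather than mere equivalence, that supplies the quotient isomorphism $\UU/\XX\simeq\UU/\YY$---is really the only point that needs attention; what follows is just bookkeeping.

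Next I would fix the bounded projections determined by the given functionals,
\[
P(f)=\sum_{n=1}^\infty \xx_n^*(f)\,\xx_n \quad\text{and}\quad Q(f)=\sum_{n=1}^\infty \yy_n^*(f)\,\yy_n,
\]
which are projections of $\UU$ onto $\XX$ and $\YY$, respectively, and let $T_0\colon\XX\to\YY$ be the isomorphism with $T_0(\xx_n)=\yy_n$. Applying Lemma~\ref{lem:EquivCong} with this $T_0$, $P$ and $Q$ yields an automorphism $T$ of $\UU$ with $T|_\XX=T_0$---so that $T(\xx_n)=\yy_n$ for all $n$, i.e.\ $T(\XB)=\YB$---and with $T_0\circ P=Q\circ T$.

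It then remains to read the identity $Q\circ T=T_0\circ P$ as $T^*(\YB^*)=\XB^*$. Fix $f\in\UU$. On the one hand $Q(Tf)=\sum_{n=1}^\infty \yy_n^*(Tf)\,\yy_n$; on the other hand, using the continuity of $T_0$ and the convergence of the series defining $Pf$,
\[
T_0(Pf)=\sum_{n=1}^\infty \xx_n^*(f)\,T_0(\xx_n)=\sum_{n=1}^\infty \xx_n^*(f)\,\yy_n.
\]
Since $(\yy_n)_{n=1}^\infty$ is a Schauder basis of $\YY$, comparing coefficients gives $\yy_n^*(Tf)=\xx_n^*(f)$ for every $n$ and every $f\in\UU$, that is, $T^*(\yy_n^*)=\xx_n^*$ for all $n$, which is exactly $T^*(\YB^*)=\XB^*$. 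Thus $T$ is an automorphism of $\UU$ with $T(\XB)=\YB$ and $T^*(\YB^*)=\XB^*$, as required.
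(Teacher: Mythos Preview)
Your proof is correct and follows essentially the same approach as the paper: observe that congruence yields $\UU/[\XB]\simeq\UU/[\YB]$, then invoke Lemma~\ref{lem:EquivCong} with the projections $P$ and $Q$ determined by the given functionals, and finally read off $T^*(\YB^*)=\XB^*$ from the intertwining relation $T_0\circ P=Q\circ T$. The paper's proof is terse and leaves this last step as ``routine''; you have simply written out that routine verification.
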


\begin{proof}
Congruence implies $\UU/[\XB] \simeq \UU/[\YB]$. Consider the automorphism of $\UU$ provided by Lemma~\ref{lem:EquivCong}. It is routine to check that it satisfies the desired conditions.
\end{proof}

The following two results are  improved versions of the small perturbation principle that fit our purposes. As before, we state it for (non-necessarily locally convex) quasi-Banach spaces.

Given a sequence $\At$ in a dual space $\XX^*$, where $\XX$ is a quasi-Banach space $[\At]_{w^*}$ denotes its closed linear span relative to the $\wstartop$topology in $\XX^*$. Given $(\xx_n^*)_{n=1}^\infty$ and $x^*$ in $\XX^*$, the symbol
\[
x^*=\wstartop\sum_{n=1}^\infty \xx_n^*
\]
means that the series $\sum_{n=1}^\infty \xx_n^*$ converges to $\xx^*$ in the $\wstartop$topology.
\begin{lemma}\label{lem:SP}
Let  $\XB=(\xx_n,\xx_n^*)_{n=1}^\infty$ be a biorthogonal system in a  $p$-Banach space $\XX$, $0<p\le 1$. Let  $\YB=(\yy_n)_{n=1}^\infty$ be a sequence in $\XX$, and suppose that 
\[
\sum_{n=1}^\infty \norm{\yy_n-\xx_n}^p\norm{\xx_n^*}^p<1.
\]
Then, $\YB$ is congruent to $\XB$. Moreover, we can choose  $(\yy_n^*)_{n=1}^\infty$ biorthogonal to $\YB$ satisfying conditions \ref{it:Funtcionals:A} and \ref{it:Funtcionals:B} below.
\begin{enumerate}[label=(\alph*), leftmargin=*,widest=ii]
 \item\label{it:Funtcionals:A} If $\xx_k^*(\yy_n)=0$ for all $(k,n)\in\NN^2$ with $k>n$, then, for every $n\in\NN$,
 \[
\yy_n^*\in \UU_n:=[\xx_k^* \colon k\ge n]_{w^*}.
\]

\item\label{it:Funtcionals:B} If $\XB$ is a complemented unconditional basic sequence with projecting functionals $(\xx_n^*)_{n=1}^\infty$, then  $\YB$ is a complemented unconditional basic sequence with projecting functionals $(\yy_n^*)_{n=1}^\infty$.
\end{enumerate}
\end{lemma}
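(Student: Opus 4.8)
The plan is to run the classical small perturbation argument in the quasi-Banach setting, producing an automorphism of $\XX$ that sends $\xx_n$ to $\yy_n$, and then to extract the biorthogonal functionals $(\yy_n^*)_{n=1}^\infty$ from the adjoint of that automorphism, checking afterwards that the extra properties \ref{it:Funtcionals:A} and \ref{it:Funtcionals:B} are inherited. First I would define the linear map $R\colon \XX\to\XX$ by $R(f)=\sum_{n=1}^\infty \xx_n^*(f)\,(\yy_n-\xx_n)$, so that formally $S:=\Id_\XX+R$ satisfies $S(\xx_n)=\yy_n$. The hypothesis $\sum_n \norm{\yy_n-\xx_n}^p\norm{\xx_n^*}^p<1$ is exactly what is needed to bound $R$: using the $p$-triangle inequality, $\norm{R(f)}^p\le\sum_n \norm{\xx_n^*}^p\norm{\yy_n-\xx_n}^p\norm{f}^p=:\theta^p\norm{f}^p$ with $\theta<1$, so $R$ is a well-defined bounded operator with $\norm{R}\le\theta<1$, and hence the Neumann-type series $\sum_{k\ge 0}(-R)^k$ converges in the (complete, quasi-normed) operator algebra, giving that $S$ is an automorphism of $\XX$. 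This proves $\YB$ is congruent to $\XB$. One subtlety to record is that for $p$-Banach spaces the operator norm is only a $p$-norm, not submultiplicative in the usual sense, but $\norm{R^k}\le \norm{R}^k$ still holds since $R^k$ is literally the $k$-fold composition, so invertibility of $\Id+R$ goes through.

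Next I would define $\yy_n^*:=\xx_n^*\circ S^{-1}$; biorthogonality $\yy_k^*(\yy_n)=\xx_k^*(S^{-1}\yy_n)=\xx_k^*(\xx_n)=\delta_{kn}$ is immediate, and these are the ``canonical'' functionals attached to the congruence $S$. For part \ref{it:Funtcionals:A}, the triangularity hypothesis $\xx_k^*(\yy_n)=0$ for $k>n$ says that $R$, and therefore $S$ and $S^{-1}$, is lower-triangular with respect to the expansions in $(\xx_n)$ in the sense that $\xx_k^*(S^{-1}f)$ depends only on $\xx_j^*(f)$ for $j\le k$ — more precisely, $\xx_k^*\circ S^{-1}$ lies in the $\wstartop$-closed span of $\{\xx_j^*\colon j\le k\}$; wait, that is the wrong direction, so I should instead argue that the transpose relation $S^*(\xx_n^*)=\yy_n^*$ combined with $S^*$ being ``upper-triangular'' on functionals forces $\yy_n^*\in[\xx_k^*\colon k\ge n]_{w^*}=\UU_n$. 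Concretely, $S^*(\xx_n^*)=\xx_n^*+R^*(\xx_n^*)$ and $R^*(\xx_n^*)$ evaluated at $f$ equals $\sum_m \xx_m^*(f)\,\xx_n^*(\yy_m-\xx_m)=\sum_{m\ge n}\xx_m^*(f)\,\xx_n^*(\yy_m-\xx_m)$ by the triangularity and biorthogonality, so $R^*(\xx_n^*)\in\UU_n$; iterating, every power $(R^*)^k(\xx_n^*)\in\UU_n$, and since $\UU_n$ is $\wstartop$-closed, $\yy_n^*=\sum_{k\ge 0}(R^*)^k(\xx_n^*)\in\UU_n$ — this needs the series to converge $\wstartop$ in $\XX^*$, which follows from norm convergence of $S^{-*}=\sum_k(-R^*)^k$ since $\norm{R^*}\le\norm{R}<1$.

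For part \ref{it:Funtcionals:B}, suppose $(\xx_n^*)_{n=1}^\infty$ are projecting functionals, i.e.\ $\norm{\sum_{n\in A}\xx_n^*(f)\,\xx_n}\le C\norm{f}$ for all finite $A$ and all $f$. Writing $P_A(f)=\sum_{n\in A}\xx_n^*(f)\,\xx_n$ for the associated uniformly bounded projections, the corresponding operator for $\YB$ is $Q_A(f)=\sum_{n\in A}\yy_n^*(f)\,\yy_n=\sum_{n\in A}\xx_n^*(S^{-1}f)\,S(\xx_n)=S\big(P_A(S^{-1}f)\big)$, so $\norm{Q_A}\le\norm{S}\,C\,\norm{S^{-1}}$ uniformly in $A$. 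Hence $\YB$ is a complemented unconditional basic sequence with the displayed $(\yy_n^*)$ as projecting functionals (unconditionality of $\YB$ also follows directly from congruence, or from the uniform boundedness of the $Q_A$). The main obstacle I anticipate is purely the non-locally-convex bookkeeping: one must be careful that the $p$-norm controls the relevant series both in $\XX$ (for the Neumann series defining $S^{-1}$) and in $\XX^*$ in the $\wstartop$-sense (for identifying $\yy_n^*$ in $\UU_n$), since $\XX^*$ need not be a nice predual-type space when $\XX$ is not locally convex; handling the $\wstartop$-convergence carefully, rather than the formal manipulations, is where the real work lies.
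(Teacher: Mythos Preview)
Your proposal is correct and follows essentially the same approach as the paper: define the perturbation operator (the paper's $E$ is your $-R$), invert $\Id_\XX+R$ via the Neumann series, set $\yy_n^*=(S^{-1})^*(\xx_n^*)$, and verify \ref{it:Funtcionals:A} by showing the adjoint of the perturbation preserves each $\UU_n$. Two minor points: the series for $\yy_n^*$ should read $\sum_{k\ge 0}(-R^*)^k(\xx_n^*)$, and for the iteration in \ref{it:Funtcionals:A} you implicitly need $R^*(\UU_n)\subseteq\UU_n$ (not just $R^*(\xx_n^*)\in\UU_n$), which follows from your computation plus the weak*-continuity of $R^*$; the paper instead computes $E^*(f^*)$ directly for arbitrary $f^*\in\UU_n$.
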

\begin{proof}
The map $E\colon\XX\to\XX$ given by
\[
E(f)=\sum_{n=1}^\infty  \xx_n^*(f)  (\xx_n-\yy_n), \quad f\in\XX,
\]
is well-defined, and we have $\norm{E}<1$. Consequently, $S=\Id_\XX-E$ is an automorphism whose inverse is
\[
T=\sum_{n=0}^\infty E^n.
\]
We have $S(\xx_n)=\yy_n$ for every $n\in\NN$. It is clear that $(T^*(\xx_n^*))_{n=1}^\infty$ is biorthogonal to $\YB$ and satisfies \ref{it:Funtcionals:B}. In order to prove  \ref{it:Funtcionals:A}, since
\[
T^*=\wstartop\sum_{n=0}^\infty (E^*)^n, 
\]
it suffices to prove that $E^*(\UU_k)\subseteq\UU_k$ for every $k\in\NN$. Notice that
\[
E^*(f^*)=\wstartop\sum_{n=1}^\infty   f^*(\xx_n-\yy_n)\, \xx_n^*, \quad f^*\in\XX^*.
\]
Pick $f^*\in\UU_k$. We have $f^*(\xx_n)=f^*(\yy_n)=0$ for every $n\in\NN$ with $n<k$. Consequently,
\[
E^*(f^*)=\wstartop\sum_{n=k}^\infty   f^*(\xx_n-\yy_n)\, \xx_n^*\in\UU_k. \qedhere
\]
\end{proof}

\begin{lemma}\label{lem:SPD}
Let  $\XB=(\xx_n)_{n=1}^\infty$ be a complemented unconditional basic sequence in a $p$-Banach space $\XX$, $0<p\le 1$, with projecting functionals $(\xx_n^*)_{n=1}^\infty$. Let  $\YB^*=(\yy_n^*)_{n=1}^\infty$ be another sequence in $\XX^*$, and suppose that 
\[
\sum_{n=1}^\infty \norm{\yy_n^*-\xx_n^*}^p\norm{\xx_n}^p<1.
\]
Then, there is a sequence $\YB=(\yy_n)_{n=1}^\infty$ congruent to $\XB$ such that  $\YB^*$ are projecting functionals for $\YB$.
\end{lemma}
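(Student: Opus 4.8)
The plan is to mimic the proof of Lemma~\ref{lem:SP}, transplanting the perturbation to the predual side so that a single automorphism of $\XX$ does all the work. First I would introduce the linear map
\[
G\colon\XX\to\XX,\qquad G(f)=\sum_{n=1}^\infty (\xx_n^*-\yy_n^*)(f)\,\xx_n .
\]
Since $\XX$ is a $p$-Banach space, for every finite $A\subseteq\NN$ one has $\norm{\sum_{n\in A}(\xx_n^*-\yy_n^*)(f)\,\xx_n}^p\le\norm{f}^p\sum_{n\in A}\norm{\yy_n^*-\xx_n^*}^p\norm{\xx_n}^p$; hence the series defining $G(f)$ converges unconditionally, $G$ is bounded, and $\norm{G}^p\le\sum_{n=1}^\infty\norm{\yy_n^*-\xx_n^*}^p\norm{\xx_n}^p<1$. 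Consequently $S:=\Id_\XX-G$ is an automorphism of $\XX$ with inverse $T=\sum_{n=0}^\infty G^n$, the Neumann series converging in the $p$-Banach algebra of bounded operators on $\XX$ exactly as in Lemma~\ref{lem:SP}.

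The crucial point is the identity $\yy_n^*=\xx_n^*\circ S$. Indeed, biorthogonality of $\XB$ gives $\xx_n^*(G(f))=(\xx_n^*-\yy_n^*)(f)$ for all $f\in\XX$ and $n\in\NN$, so $\xx_n^*(S(f))=\xx_n^*(f)-\xx_n^*(G(f))=\yy_n^*(f)$. This identity (equivalently, $S^*(\xx_n^*)=\yy_n^*$) lets me dispense with any $w^*$-topology bookkeeping.

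Next I would set $\yy_n:=T(\xx_n)$ and $\YB:=(\yy_n)_{n=1}^\infty$, so that $T$ exhibits the congruence of $\YB$ with $\XB$. To finish I must check that $\YB^*$ are projecting functionals for $\YB$. Biorthogonality is immediate from the crucial identity: $\yy_n^*(\yy_m)=\xx_n^*\big(S(T(\xx_m))\big)=\xx_n^*(\xx_m)=\delta_{n,m}$. For the uniform bound, let $C$ be a constant such that $\norm{\sum_{n\in A}\xx_n^*(g)\,\xx_n}\le C\norm{g}$ for all $g\in\XX$ and all finite $A\subseteq\NN$. Then, for such $A$ and any $f\in\XX$,
\[
\sum_{n\in A}\yy_n^*(f)\,\yy_n=T\!\left(\sum_{n\in A}\xx_n^*(S(f))\,\xx_n\right),
\]
whence $\norm{\sum_{n\in A}\yy_n^*(f)\,\yy_n}\le\norm{T}\,C\,\norm{S}\,\norm{f}$. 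By the characterization recorded after Theorem~\ref{thm:Kahane}, $\YB$ is a complemented unconditional basic sequence with projecting functionals $\YB^*$, which is the claim.

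I do not expect a serious obstacle: once one realizes that the correct perturbation operator acts on $\XX$ (not on $\XX^*$), the argument is essentially forced and is the formal dual of Lemma~\ref{lem:SP}. The only places demanding a little care are the convergence of the series defining $G$ and of the Neumann series for $T$ in the possibly non-locally-convex setting, both handled by the $p$-subadditivity of the quasi-norm.
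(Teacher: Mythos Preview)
Your proof is correct and follows essentially the same approach as the paper: your operator $G$ is precisely the paper's $E$, your $S=\Id_\XX-G$ is the paper's $\Id_\XX-E$, and your $\yy_n=T(\xx_n)$ coincides with the paper's choice. The paper derives the projecting-functional property by invoking the general transport principle (an automorphism carries projecting functionals via the dual of its inverse), while you verify biorthogonality and the uniform bound explicitly; both routes amount to the same identity $S^*(\xx_n^*)=\yy_n^*$.
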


\begin{proof}
The map $E\colon\XX\to\XX$ given by
\[
E(f)=\sum_{n=1}^\infty \left( \xx_n^*(f) -\yy^*_n(f)\right) \, \xx_n, \quad f\in\XX,
\]
is well-defined, and we have $\norm{E}<1$. Consequently, $\Id_\XX-E$ is an automorphism. Hence, if $S$ denote its inverse, $(S(\xx_n))_{n=1}^\infty$ is a complemented basic sequence with projecting functionals 
\[
\zz_n^*:=(\Id_\XX-E)^*(\xx_n^*), \quad n\in\NN.
\]
Since $(\Id_\XX-E)^*=\Id_{\XX^*}-E^*$ and $E^*\colon\XX^*\to\XX^*$ is given by
\[
E^*(f^*)=\wstartop\sum_{n=1}^\infty f^*(\xx_n) (\xx_n^*-\yy_n^*),\quad f^*\in\XX^*,
\]
$\zz_n^*=\yy_n^*$ for every $n\in\NN$.  Hence we can take $\yy_n:=S(\xx_n)$.
\end{proof}

\section{Unconditional basic sequences in function spaces}\label{sec:FunctSpaces}\noindent
We start our study with a lemma that places a given basic sequence in a function space with the unit vector system of $\ell_2$ face to face. To state it, we introduce some additional terminology. We say that a sequence $\XB=(\xx_n)_{n=1}^\infty$ in a quasi-Banach space $\XX$ \emph{dominates}  a sequence $\YB=(\yy_n)_{n=1}^\infty$ in a quasi-Banach space $\YY$ if  there is a bounded linear map 
\[
T\colon[\xx_n \colon n\in\NN] \to \YY
\]
such that $T(\xx_n)=\yy_n$ for all $n\in\NN$, in which case we also say that $\YB$ \emph{is dominated by} $\XB$. If $\norm{T}\le C$ we say that $\XB$ \emph{$C$-dominates} $\YB$. Notice that the sequences $\XB$ and $\YB$ are equivalent if and only  if $\XB$ both dominates and is dominated by  $\YB$.
Given a measure space $(\Omega,\Sigma,\mu)$, we say that a sequence $(f_n)_{n=1}^\infty$ in $L_0(\mu)$  \emph{escapes to infinity} if
\[
\lim_n \essinf\{ \abs{f_n(\omega)} \colon \omega\in\supp(f_n)\}=\infty.
\]
\begin{lemma}\label{lem:Embedding}
Let $\rho$ be a function norm over a $\sigma$-finite measure space $(\Omega,\Sigma,\mu)$. Let $\Psi=(\bpsi_n)_{n=1}^\infty$ be a semi-normalized unconditional basic sequence in $\LL_\rho$.

\begin{enumerate}[label=(\roman*),leftmargin=*,widest=iii]
\item\label{it:EmbeddingA} Suppose that $\LL_\rho$ has type $2$ and nontrivial cotype. Then, $\Psi$ is dominated by the unit vector system of $\ell_2$.
\item\label{it:EmbeddingB} Suppose that there is function norm $\rho_b$ over $(\Omega,\Sigma,\mu)$ such that  $\LL_\rho\subseteq L_{\rho_b}$, $ \LL_{\rho_b}$ has cotype $2$, and $\inf_n \norm{\bpsi_n}_{\rho_b}>0$. Then,  $\Psi$ dominates the unit vector system of $\ell_2$.
\item\label{it:EmbeddingC} Suppose that $\mu$ is finite and that $\inf_n \norm{\bpsi_n}_{\rho_b}=0$ for some  function norm $\rho_b$. Then, there is an increasing sequence $(n_k)_{k=1}^\infty$ and a non-increasing sequence $(A_k)_{k=1}^\infty$ in $\Sigma$ such that 
\[
\lim_k \norm{\bpsi_{n_k} - \bpsi_{n_k} \chi_{A_k}}_\rho=0
\]
and $\lim_k \mu(A_k)=0$.
\item\label{it:EmbeddingD}  Suppose that $\rho$ is absolutely continuous  and that there is a non-increasing sequence $(A_n)_{n=1}^\infty$ in $\Sigma$  such that $\supp(\bpsi_n)\subseteq A_n$ for all $n\in\NN$, and
\[
\lim_n\mu(A_n)=0.
\]
Then, there is an increasing sequence $(n_k)_{k=1}^\infty$ in $\NN$ and a pairwise disjointly supported sequence $(\bphi_k)_{k=1}^\infty$ consisting of simple functions escaping to infinity such that $\abs{\bphi_k} \le \abs{\bpsi_{n_k}}$, $\supp(\bphi_k)\subseteq A_{n_k}\setminus A_{n_{k+1}}$ for all $k\in\NN$, and
\[
\lim_k \norm{\bpsi_{n_k} - \bphi_k}_\rho=0.
\]
\end{enumerate}
\end{lemma}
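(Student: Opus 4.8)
I would handle the four items separately; the first two are soft, and the last two are perturbation statements of increasing difficulty.

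\medskip
\noindent\emph{Items \ref{it:EmbeddingA} and \ref{it:EmbeddingB}.} Each follows by combining the randomized-norm equivalence \eqref{eq:UncRandom} (taken with $p=2$) with a single type/cotype inequality. For \ref{it:EmbeddingA}, the definition of Rademacher type $2$ of $\LL_\rho$ gives, for $(a_n)_{n=1}^\infty\in c_{00}$,
\[
\norm{\sum_{n}a_n\,\bpsi_n}_\rho\lesssim\left(\sum_{n}\norm{a_n\,\bpsi_n}_\rho^2\right)^{1/2}\le\Big(\sup_n\norm{\bpsi_n}_\rho\Big)\left(\sum_{n}\abs{a_n}^2\right)^{1/2},
\]
so $e_n\mapsto\bpsi_n$ extends to a bounded map from $\ell_2$, i.e.\ $\Psi$ is dominated by the unit vector system of $\ell_2$ (the hypothesis of nontrivial cotype is not even used in this direction; one could alternatively route through $2$-convexity via Theorem~\ref{thm:RadConvBis}). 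For \ref{it:EmbeddingB}, the inclusion $\LL_\rho\subseteq\LL_{\rho_b}$ is continuous by the closed graph theorem, since norm convergence in either space passes to an a.e.\ convergent subsequence; applying \eqref{eq:UncRandom} in $\LL_\rho$, then the continuity of the inclusion, then Rademacher cotype $2$ of $\LL_{\rho_b}$,
\[
\norm{\sum_{n}a_n\,\bpsi_n}_\rho\gtrsim\left(\sum_{n}\norm{a_n\,\bpsi_n}_{\rho_b}^2\right)^{1/2}\ge\Big(\inf_n\norm{\bpsi_n}_{\rho_b}\Big)\left(\sum_{n}\abs{a_n}^2\right)^{1/2},
\]
so $\bpsi_n\mapsto e_n$ extends to a bounded map into $\ell_2$, i.e.\ $\Psi$ dominates the unit vector system of $\ell_2$.

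\medskip
\noindent\emph{Item \ref{it:EmbeddingC}.} Since $\mu$ is finite, $\rho(\chi_\Omega)<\infty$, and property~\ref{FN:L1B} applied to the function norm $\rho_b$ with $E=\Omega$ produces a constant $C$ with $\norm{\bpsi_n}_{L_1(\mu)}\le C\norm{\bpsi_n}_{\rho_b}$ for every $n$. As $\inf_n\norm{\bpsi_n}_{\rho_b}=0$, choose an increasing $(n_k)_{k=1}^\infty$ with $\norm{\bpsi_{n_k}}_{L_1(\mu)}\le 4^{-k}$, and set $B_k:=\{\abs{\bpsi_{n_k}}>2^{-k}\}$ and $A_k:=\bigcup_{j\ge k}B_j$. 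Chebyshev's inequality yields $\mu(B_k)\le 2^k\norm{\bpsi_{n_k}}_{L_1(\mu)}\le 2^{-k}$, so $(A_k)$ is non-increasing with $\mu(A_k)\le 2^{1-k}\to 0$; and on $\Omega\setminus A_k\subseteq\Omega\setminus B_k$ we have $\abs{\bpsi_{n_k}}\le 2^{-k}$, whence $\norm{\bpsi_{n_k}-\bpsi_{n_k}\chi_{A_k}}_\rho=\rho\big(\abs{\bpsi_{n_k}}\chi_{\Omega\setminus A_k}\big)\le 2^{-k}\rho(\chi_\Omega)\to 0$.

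\medskip
\noindent\emph{Item \ref{it:EmbeddingD}, Stage 1 (truncating a single function).} This is the only laborious item, and I would run a two-stage gliding-hump argument. After discarding finitely many indices I may assume $\mu(A_1)<\infty$; then Theorem~\ref{thm:CharAbsCont}\ref{it:AC} gives $\rho(\abs{h}\chi_{A_n})\to 0$ for every fixed $h\in\LL_\rho$, and in particular $\rho(\chi_{A_n})\to 0$. For fixed $n$ and $k$, decompose $\bpsi_n=\bpsi_n\chi_{\{\abs{\bpsi_n}<k\}}+\bpsi_n\chi_{\{k\le\abs{\bpsi_n}\le M\}}+\bpsi_n\chi_{\{\abs{\bpsi_n}>M\}}$: the first term has $\rho$-norm $\le k\,\rho(\chi_{A_n})$; the third tends to $0$ in $\rho$-norm as $M\to\infty$ (its support is non-increasing with $\mu\to 0$, using $\bpsi_n\in L_1(\mu)$ since $\mu(\supp\bpsi_n)<\infty$); and the middle term, being bounded away from $0$ and from $\infty$ on a finite-measure set, is approximated in $\rho$-norm---with error bounded by $\rho(\chi_{A_n})$ times the pointwise tolerance---by a simple function supported on $\{k\le\abs{\bpsi_n}\le M\}$, dominated in modulus by $\abs{\bpsi_n}$, with essential infimum at least $k$ on its support. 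Choosing first $n$ (making $\rho(\chi_{A_n})$ small), then $M$, then the tolerance, I obtain an increasing $(m_k)$ and simple functions $\bphi_k$ with $\abs{\bphi_k}\le\abs{\bpsi_{m_k}}$, $\supp\bphi_k\subseteq\supp\bpsi_{m_k}\subseteq A_{m_k}$, $\essinf\{\abs{\bphi_k(\omega)}\colon\omega\in\supp\bphi_k\}\ge k-1$, and $\norm{\bpsi_{m_k}-\bphi_k}_\rho<2^{-k-1}$; semi-normalization forces $\bphi_k\neq 0$ once $\rho(\chi_{A_{m_k}})$ is small.

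\medskip
\noindent\emph{Item \ref{it:EmbeddingD}, Stage 2 (disjointifying), and the main obstacle.} Each $\bphi_k$ is now a fixed element of $\LL_\rho$, so $\rho(\abs{\bphi_k}\chi_{A_{m_j}})\to 0$ as $j\to\infty$ by absolute continuity. Extract inductively indices $k_1<k_2<\cdots$ with $\rho\big(\abs{\bphi_{k_j}}\chi_{A_{m_{k_{j+1}}}}\big)<2^{-j-1}$, put $n_j:=m_{k_j}$, and replace $\bphi_{k_j}$ by $\bphi_{k_j}\chi_{A_{n_j}\setminus A_{n_{j+1}}}=\bphi_{k_j}\chi_{\Omega\setminus A_{n_{j+1}}}$ (the last equality because $\supp\bphi_{k_j}\subseteq A_{n_j}$). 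Monotonicity of $(A_n)$ makes the sets $A_{n_j}\setminus A_{n_{j+1}}$ pairwise disjoint, so the new functions are pairwise disjointly supported, still satisfy $\abs{\,\cdot\,}\le\abs{\bpsi_{n_j}}$, are nonzero with essential infima $\ge k_j-1\to\infty$ on their supports for large $j$ (again by semi-normalization), and
\[
\norm{\bpsi_{n_j}-\bphi_{k_j}\chi_{\Omega\setminus A_{n_{j+1}}}}_\rho\le\norm{\bpsi_{n_j}-\bphi_{k_j}}_\rho+\rho\big(\abs{\bphi_{k_j}}\chi_{A_{n_{j+1}}}\big)<2^{-k_j-1}+2^{-j-1}\to 0;
\]
relabelling gives the claim. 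The crux is exactly this bookkeeping: one must simultaneously push the essential infima of the humps to $+\infty$ (which costs a $\rho(\chi_{A_n})$-sized truncation error), replace possibly unbounded functions by genuine simple functions, and confine the $k$-th hump to $A_{n_k}\setminus A_{n_{k+1}}$ so the humps become disjoint---while the underlying functions $\bpsi_n$ keep changing with $n$. Absolute continuity is the single tool that decouples these demands, forcing both $\rho(\chi_{A_n})\to 0$ and $\rho(\abs{h}\chi_{A_n})\to 0$ for each already-built hump $h$, so the choices can be made in cascade.
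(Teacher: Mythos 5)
Your proof is correct and follows essentially the same route as the paper's: the unconditionality randomization \eqref{eq:UncRandom} combined with type/cotype for \ref{it:EmbeddingA}--\ref{it:EmbeddingB}, an extraction based on convergence in measure for \ref{it:EmbeddingC}, and absolute continuity plus simple-function approximation with a lower truncation at a level chosen so that the error is controlled by the cutoff times $\rho(\chi_{A_n})$ for \ref{it:EmbeddingD}. The differences are only organizational: you apply the type and cotype inequalities directly to the randomized sums rather than passing through the lattice square function \eqref{eq:CotpyeRandom} (observing in passing that nontrivial cotype is not needed for \ref{it:EmbeddingA}), you replace the Egoroff step in \ref{it:EmbeddingC} by a Chebyshev/Borel--Cantelli extraction, and you run \ref{it:EmbeddingD} as a two-stage cascade (approximate each $\bpsi_n$, then disjointify the resulting simple functions) instead of the paper's single recursion with cutoff $s_{n}=\rho(\chi_{A_{n}})^{-1/2}$.
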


\begin{proof}
To prove \ref{it:EmbeddingA} and  \ref{it:EmbeddingB} we use \eqref{eq:UncRandom} and \eqref{eq:CotpyeRandom}. In the former case we have
\begin{align*}
\norm{\sum_{n=1}^\infty a_n\, \bpsi_n}_\rho
&\approx \norm{\left( \sum_{n=1}^\infty \abs{a_n}^2 \abs{\bpsi_n}\right)^{1/2}}_\rho\\
&\lesssim \left( \sum_{n=1}^\infty \abs{a_n}^2 \norm{\bpsi_n}_\rho^2\right)^{1/2}\\
&\approx \left( \sum_{n=1}^\infty \abs{a_n}^2 \right)^{1/2}
\end{align*}
for $f=(a_n)_{n=1}^\infty\in c_{00}$. In the latter case we have
\begin{align*}
\norm{\sum_{n=1}^\infty a_n\, \bpsi_n}_\rho
&\approx\Ave_{\varepsilon_n=\pm 1} \norm{\sum_{n=1}^\infty \varepsilon_n\, a_n\, \bpsi_n}_\rho\\
&\gtrsim \Ave_{\varepsilon_n=\pm 1} \norm{\sum_{n=1}^\infty \varepsilon_n\, a_n\, \bpsi_n}_{\rho_b}\\
&\approx \norm{\left( \sum_{n=1}^\infty \abs{a_n}^2 \abs{\bpsi_n}\right)^{1/2}}_{\rho_b}\\
&\gtrsim \left( \sum_{n=1}^\infty \abs{a_n}^2 \norm{\bpsi_n}_{\rho_b}^2\right)^{1/2}\\
&\approx \left( \sum_{n=1}^\infty \abs{a_n}^2 \right)^{1/2}.
\end{align*}

To prove \ref{it:EmbeddingC} we  assume, passing to a subsequence, that $\Psi$ converges to zero in measure.  By Egoroff's theorem, passing to a further subsequence we can assume that $\lim_n \bpsi_n=0$ almost uniformly. We infer that there is a non-increasing sequence $(A_n)_{n=1}^\infty$ in $\Sigma$ such that $\lim_n \mu(A_n)=0$ and
\[
\lim_n\norm{\bpsi_n-\bpsi_n\chi_{A_n}}_\infty=0.
\]
Since $\chi_\Omega\in \LL_\rho$, $L_\infty(\mu)\subseteq \LL_\rho$. Consequently,  $\lim_n\norm{\bpsi_n-\bpsi_n\chi_{A_n}}_\rho=0$. 

To prove \ref{it:EmbeddingD}, we set $s_n=\rho^{-1/2}(A_n)$ and $\varepsilon_n= s_n\rho(A_n)$ for $n\in\NN$. Notice that $\lim_n s_n=\infty$ and $\lim_n \varepsilon_n=0$. Starting  with $n_1=1$, we recursively pick $n_k$ such that 
\[
\norm{\bpsi_{n_{k-1}}  \chi_{A_{n_k}}}_\rho\le \varepsilon_k.
\]
By Theorem~\ref{thm:CharAbsCont} and approximating by simple functions, there are simple functions $(\bphi_k)_{k=1}^\infty$ such that $\supp(\bphi_k)\subseteq B_k:=A_{n_k}\setminus A_{n_{k+1}}$, $ \abs{\bphi_k}\le \abs{\bpsi_{n_k}} \chi_{B_k}$, and
\[
\norm{\bpsi_{n_k} \chi_{B_k} - \bphi_k}_\rho\le \varepsilon_k
\]
for all $k\in\NN$. Now, set $D_k=\{\omega\in\Omega \colon \abs{\bphi_k(\omega)}\ge s_{n_k}\}$. We have
\[
 \norm{\bphi_k-\bphi_k\chi_{D_k}}_\rho\le s_{n_k} \rho(\chi_{D_{n_k}})\le 
  s_{n_k}  \rho(\chi_{A_{n_k}})=\varepsilon_{n_k}.
\]
Summing up, we obtain $\lim_k \lim_k \norm{\bpsi_{n_k} - \bphi_k \chi_{D_k}}_\rho=0$.
\end{proof}

The following theorem generalizes the result from  \cite{KadPel1962} that any semi-normalized unconditional basis in $L_p$, $2\le p<\infty$, is either the canonical basis of $\ell_2$ or has a subsequence equivalent to a disjointly supported sequence.
\begin{theorem}\label{thm:Type:Absolutely}
Let $\rho$ be a function norm over a finite measure space $(\Omega,\Sigma,\mu)$. Suppose that $\LL_\rho\subseteq L_2(\mu)$ and that $\LL_\rho$ has Rademacher type $2$. Let $\Psi=(\bpsi_n)_{n=1}^\infty$ be a semi-normalized unconditional basic sequence in $\LL_\rho$. 
\begin{enumerate}[label=(\roman*),leftmargin=*,widest=ii]
\item\label{it:Type:AbsolutelyA} If $\inf_n \norm{\bpsi_n}_2>0$, then $\Psi$ is equivalent to the unit vector system of $\ell_2$. Oppositely, 
\item\label{it:Type:AbsolutelyB}  if $\inf_n \norm{\bpsi_n}_2=0$, then $\Psi$ has a subsequence congruent to a disjointly supported sequence $\Phi$ consisting of simple functions escaping to infinity.
\end{enumerate}
\end{theorem}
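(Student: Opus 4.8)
The two cases are handled by different parts of Lemma~\ref{lem:Embedding}. For \ref{it:Type:AbsolutelyA} I would take $\rho_b$ to be the $L_2$-norm: the hypothesis $\LL_\rho\subseteq L_2(\mu)$ means exactly $\LL_\rho\subseteq\LL_{\rho_b}$, the space $L_2(\mu)$ has cotype $2$, and $\inf_n\norm{\bpsi_n}_2>0$ is precisely the remaining hypothesis of Lemma~\ref{lem:Embedding}\ref{it:EmbeddingB}, so $\Psi$ dominates the unit vector system of $\ell_2$. On the other hand $\LL_\rho$ has Rademacher type $2$, and by Theorem~\ref{thm:RadConvBis} type $2$ forces nontrivial concavity (equivalently nontrivial cotype by the Remark after Theorem~\ref{thm:RadConv}), so Lemma~\ref{lem:Embedding}\ref{it:EmbeddingA} applies and $\Psi$ is dominated by the unit vector system of $\ell_2$. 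Domination in both directions is exactly equivalence, which proves \ref{it:Type:AbsolutelyA}.

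For \ref{it:Type:AbsolutelyB} I would chain together parts \ref{it:EmbeddingC} and \ref{it:EmbeddingD}. Since $\mu$ is finite and $\inf_n\norm{\bpsi_n}_2=0$, Lemma~\ref{lem:Embedding}\ref{it:EmbeddingC} (with $\rho_b$ the $L_2$-norm) produces an increasing sequence $(n_k)$ and a non-increasing sequence $(A_k)$ in $\Sigma$ with $\mu(A_k)\to 0$ and $\norm{\bpsi_{n_k}-\bpsi_{n_k}\chi_{A_k}}_\rho\to 0$. Now I would pass to the subsequence $(\bpsi_{n_k})$ and note that a small perturbation argument lets me replace $\bpsi_{n_k}$ by its truncation $\bpsi_{n_k}\chi_{A_k}$, which is supported in $A_k$; more carefully, I would apply the perturbation machinery of Section~\ref{sec:UBS} (Lemma~\ref{lem:SP}) after a further passage to a subsequence so that $\sum_k\norm{\bpsi_{n_k}-\bpsi_{n_k}\chi_{A_k}}^p\norm{\bpsi_{n_k}^*}^p<1$, using the fact that the coordinate functionals of a semi-normalized unconditional basic sequence are uniformly bounded. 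This yields a sequence congruent to (a subsequence of) $\Psi$ that is supported in a non-increasing sequence of sets of measure tending to $0$. Here $\LL_\rho$ is order continuous: it has nontrivial cotype, so by Theorem~\ref{thm:CotypeAC} $\rho$ is absolutely continuous. Hence Lemma~\ref{lem:Embedding}\ref{it:EmbeddingD} applies to this perturbed sequence and produces, after a final passage to a subsequence, a pairwise disjointly supported sequence $\Phi=(\bphi_k)$ of simple functions escaping to infinity with $\norm{\cdot}_\rho$-distance tending to $0$. A second application of Lemma~\ref{lem:SP} (again after thinning so the perturbation sum is $<1$) shows this disjointly supported sequence is congruent to the corresponding subsequence of $\Psi$, which is what we want.

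**The main obstacle.** The delicate point is bookkeeping the perturbations so that congruence — not merely equivalence — is obtained, and so that the final disjointly supported sequence is genuinely congruent to a subsequence of the \emph{original} $\Psi$ rather than to some intermediate perturbation. Concretely, one must verify at each stage that (i) the relevant perturbation series $\sum\norm{\text{error}}^p\norm{\text{functional}}^p$ can be made $<1$ by thinning — which requires a uniform bound on the norms of the projecting functionals of a semi-normalized unconditional basic sequence and its subsequences, a standard but essential fact — and (ii) that congruence composes: a sequence congruent to a subsequence of $\Psi$ via an automorphism of $\LL_\rho$, when further perturbed into a congruent sequence, remains congruent to that subsequence of $\Psi$. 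Composition of congruences is immediate since automorphisms form a group, so the real work is just the quantitative control in (i), together with checking that truncating $\bpsi_{n_k}$ to $A_k$ and then to the disjoint sets $A_{n_k}\setminus A_{n_{k+1}}$ — as arranged inside the proof of Lemma~\ref{lem:Embedding}\ref{it:EmbeddingD} — does not destroy the semi-normalization needed to keep the functionals bounded along the way. Once this is organized, both assertions follow by assembling the four parts of Lemma~\ref{lem:Embedding} with the perturbation lemmas of Section~\ref{sec:UBS}.
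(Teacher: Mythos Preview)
Your proposal is correct and follows essentially the same route as the paper: parts \ref{it:EmbeddingA} and \ref{it:EmbeddingB} of Lemma~\ref{lem:Embedding} for case \ref{it:Type:AbsolutelyA}, and parts \ref{it:EmbeddingC}, \ref{it:EmbeddingD} together with Lemma~\ref{lem:SP} for case \ref{it:Type:AbsolutelyB}. The only organizational difference is that the paper applies \ref{it:EmbeddingD} directly to the truncations $\bpsi_{n_k}\chi_{A_k}$ (which already satisfy its support hypothesis) and combines the two errors by the triangle inequality, so a single application of Lemma~\ref{lem:SP} at the end suffices---your two-step perturbation works too but is slightly more than necessary.
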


\begin{proof}
The function norm $\rho$ is absolutely continuous by Theorem~\ref{thm:CotypeAC}. So, we identify $(\LL_\rho)^*$ with  $\LL_{\rho^*}$.

In the case \ref{it:Type:AbsolutelyA}, we  apply  Lemma~\ref{lem:Embedding}\ref{it:EmbeddingB} with $\rho_b=\norm{\cdot}_2$. We obtain that $\Psi$ dominates the unit vector system of $\ell_2$. In turn, by Lemma~\ref{lem:Embedding}\ref{it:EmbeddingA}, the  unit vector system of $\ell_2$ dominates $\Psi$.

In the case  \ref{it:Type:AbsolutelyB}, combining  Lemma~\ref{lem:Embedding}\ref{it:EmbeddingC} with Lemma~\ref{lem:Embedding}\ref{it:EmbeddingD} gives a pairwise disjointly supported sequence  $(\bphi_n)_{n=1}^\infty$ consisting of simple functions escaping to infinity such that
\[
\lim_n \norm{\bpsi_n -\bphi_n}_\rho=0.
\]

Set $\YY=[\bpsi_n \colon n\in\NN]$. Since $\Psi$ is a semi-normalized Schauder basis of $\YY$, there exists a sequence in $\YY^*$ biorthogonal to $\Psi$. Use the Hahn-Banach theorem to extend these coordinate functionals to a norm-bounded sequence $\Psi^*=(\bpsi_n^*)_{n=1}^\infty$ in $\LL_{\rho^*}$. We have
\[
\lim_n  \norm{\bpsi_n - \bphi_n}_\rho \norm{\bpsi_n^*} _{\rho^*}=0.
\]
Since $\Psi^*$ is biorthogonal to $\Psi$, passing  to a further subsequence,  an application of Lemma~\ref{lem:SP} puts an end to the proof.
\end{proof}
 
Before going on, we record the straightforward application of Theorem~\ref{thm:Type:Absolutely} to the study of subsymmetric basic sequences. Notice that if $\YB$ is a subbasis of a subsymetric basic sequence $\XB$ in a Banach space $\UU$, and we set $\XX=[\XB]$ and $\YY=[\YB]$, the quotient spaces $\UU/\XX$ and $\UU/\YY$ are not necessarily isomorphic, and assuming $\XB$ is complemented does not change anything. So, despite $\XB$ and $\YB$ being equivalent, they are not necessarily congruent.
\begin{corollary}\label{cor:Type:Absolutely}
Let $\rho$ be a function norm over a finite measure space $(\Omega,\Sigma,\mu)$. Suppose that $\LL_\rho\subseteq L_2(\mu)$ and that $\LL_\rho$ has Rademacher type $2$. Let $\Psi$ be a semi-normalized subsymmetric basic sequence in $\LL_\rho$. Then, $\Psi$ is equivalent to either the unit vector system of $\ell_2$ or a disjointly supported sequence consisting of simple functions escaping to infinity.
\end{corollary}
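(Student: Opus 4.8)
The plan is to deduce the corollary directly from Theorem~\ref{thm:Type:Absolutely}, using subsymmetry only to upgrade the subsequence conclusion of that theorem to a statement about the whole sequence. First I would note that, being subsymmetric, $\Psi$ is in particular an unconditional basic sequence in $\LL_\rho$, so Theorem~\ref{thm:Type:Absolutely} applies verbatim; I would then split into the two cases $\inf_n\norm{\bpsi_n}_2>0$ and $\inf_n\norm{\bpsi_n}_2=0$ treated there.

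In the first case there is nothing further to prove: Theorem~\ref{thm:Type:Absolutely}\ref{it:Type:AbsolutelyA} already asserts that $\Psi$ is equivalent to the unit vector system of $\ell_2$. In the second case, Theorem~\ref{thm:Type:Absolutely}\ref{it:Type:AbsolutelyB} only produces an increasing sequence $(n_k)_{k=1}^\infty$ in $\NN$ for which the subsequence $(\bpsi_{n_k})_{k=1}^\infty$ is congruent, hence equivalent, to a pairwise disjointly supported sequence $\Phi$ of simple functions escaping to infinity. To pass from this subsequence back to $\Psi$ itself, I would invoke the spreading half of subsymmetry: selecting a subsequence amounts to composing with an increasing map of $\NN$, so a spreading basic sequence is equivalent to each of its subsequences. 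Hence $\Psi$ is equivalent to $(\bpsi_{n_k})_{k=1}^\infty$, and chaining the two equivalences gives that $\Psi$ is equivalent to $\Phi$, a sequence of the required form.

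I do not expect a real obstacle. The only point deserving emphasis---exactly the one highlighted in the remark preceding the statement---is that it is the spreading property, and not complementability, that legitimizes the transfer from the subsequence to $\Psi$: a complemented subsymmetric subbasis need not be congruent to the original sequence, since the relevant quotient spaces may fail to be isomorphic.
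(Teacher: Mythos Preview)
Your proposal is correct and matches the paper's approach exactly: the paper presents the corollary as a ``straightforward application'' of Theorem~\ref{thm:Type:Absolutely}, relying precisely on the spreading property of subsymmetric sequences to upgrade the subsequence conclusion, and its preceding remark about quotient spaces makes the same point you do about equivalence versus congruence. There is nothing to add.
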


Any pairwise disjointly supported sequence in a function space is an unconditional basic sequence. It might not be complemented, however. In case it is, to understand the lattice structure it induces, it is convenient to look at the position of its projecting functionals within the dual space. To that end, we bring up a notion successfully used within the study of the uniqueness of structure in atomic lattices (see \cite{AlbiacAnsorena2022}).
\begin{definition}
Let $\rho$ be an absolutely continuous function norm over a $\sigma$-finite measure space. Let $\Psi$ be a sequence of nonzero functions in $\LL_\rho$. We say that $\Psi$ is a \emph{ well-complemented  basic sequence} if
\begin{itemize}
\item it is pairwise disjointly supported,
\item it is complemented, and 
\item there are projecting functionals $\Psi^*$ for $\Psi$ which, regarded as functions in $\LL_{\rho^*}$, are pairwise disjointly supported.
\end{itemize}
Such sequence $\Psi^*$ is said to be a sequence of \emph{good projecting functionals} for $\Psi$.
\end{definition}

The following two lemmas help us to pass from a complemented unconditional basic sequence to a  well-complemented  one.

\begin{lemma}\label{lem:RestrictOp}
Let $\rho$ be a function norm over a $\sigma$-finite measure space $(\Omega,\Sigma,\mu)$. Let $\Psi=(\bpsi_n)_{n=1}^\infty$ be a complemented unconditional basic sequence in $\LL_\rho$. 
Let  $\Phi=(\bphi_n)_{n=1}^\infty$ be a pairwise disjointly supported sequence in $\LL_\rho$ with $\abs{\bphi_n} \le  \abs{\bpsi_n}$ for all $n\in\NN$. Then $\Psi$ $C$-dominates $\Phi$, where $C$ only depends on the unconditionality constant of $\Psi$ and the complementabilty constant of the closed subspace it spans.
\end{lemma}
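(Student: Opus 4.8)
The goal of Lemma~\ref{lem:RestrictOp} is to bound the operator $T\colon[\bpsi_n]\to\LL_\rho$ determined by $T(\bpsi_n)=\bphi_n$, with the bound depending only on the unconditionality constant $C_1$ of $\Psi$ and the complementability constant $C_2$ of $[\Psi]$. The plan is to factor $T$ through the square-function lattice structure furnished by Theorem~\ref{thm:Kahane}. Concretely, for $f=\sum_n a_n\bpsi_n\in c_{00}$-span, I would estimate
\[
\norm{T(f)}_\rho=\norm{\sum_{n=1}^\infty a_n\,\bphi_n}_\rho
\]
by first passing to the square function: since the $\bphi_n$ are disjointly supported, $\abs{\sum_n a_n\bphi_n}=\big(\sum_n\abs{a_n}^2\abs{\bphi_n}^2\big)^{1/2}$ pointwise a.e., so $\norm{T(f)}_\rho=\norm{(\sum_n\abs{a_n}^2\abs{\bphi_n}^2)^{1/2}}_\rho$.

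Next I would use the hypothesis $\abs{\bphi_n}\le\abs{\bpsi_n}$ together with the lattice monotonicity \ref{FN:PAditive}--(F.d) of $\rho$ to dominate this by $\norm{(\sum_n\abs{a_n}^2\abs{\bpsi_n}^2)^{1/2}}_\rho$. Then Theorem~\ref{thm:Kahane}, applied to the complemented unconditional basic sequence $\Psi$, gives a constant $C$ depending only on $C_1$ and $C_2$ with
\[
\norm{\Big(\sum_{n=1}^\infty\abs{a_n}^2\abs{\bpsi_n}^2\Big)^{1/2}}_\rho\le C\,\norm{\sum_{n=1}^\infty a_n\,\bpsi_n}_\rho=C\,\norm{f}_\rho.
\]
Chaining these three inequalities yields $\norm{T(f)}_\rho\le C\norm{f}_\rho$ on the dense subspace $[\bpsi_n\colon n\in\NN]\cap c_{00}$-span, and hence $T$ extends to a bounded operator of norm at most $C$, which is exactly the assertion, since $C$ depends only on the unconditionality and complementability constants.

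There is one bookkeeping point to watch: one must verify that the identity $\abs{\sum_n a_n\bphi_n}=\big(\sum_n\abs{a_n}^2\abs{\bphi_n}^2\big)^{1/2}$ and the monotone domination $\big(\sum_n\abs{a_n}^2\abs{\bphi_n}^2\big)^{1/2}\le\big(\sum_n\abs{a_n}^2\abs{\bpsi_n}^2\big)^{1/2}$ hold $\mu$-a.e.\ and are respected by the function norm; this is immediate from disjointness of the supports of the $\bphi_n$ and from $\abs{\bphi_n}\le\abs{\bpsi_n}$, using property (F.d). The only genuine input is Theorem~\ref{thm:Kahane}, and the main (minor) obstacle is simply ensuring that the square function on the $\bpsi_n$ side is the one controlled by that theorem, i.e.\ writing everything in terms of $(\sum_n\abs{a_n}^2\abs{\bpsi_n}^2)^{1/2}$ rather than a Rademacher average, which is exactly the quantity appearing in the statement of Theorem~\ref{thm:Kahane}. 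No cotype assumption on $\LL_\rho$ is needed precisely because complementability of $\Psi$ substitutes for it in Theorem~\ref{thm:Kahane}.
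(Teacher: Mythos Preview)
Your proposal is correct and follows exactly the same approach as the paper: use disjointness of the $\bphi_n$ to rewrite $\norm{\sum_n a_n\bphi_n}_\rho$ as the norm of the square function, apply the pointwise domination $\abs{\bphi_n}\le\abs{\bpsi_n}$ and lattice monotonicity, and then invoke Theorem~\ref{thm:Kahane} to pass back to $\norm{\sum_n a_n\bpsi_n}_\rho$. The paper's proof is the same three-line chain of inequalities you describe.
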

\begin{proof}
By Theorem~\ref{thm:Kahane}, for $(a_n)_{n=1}^\infty\in c_{00}$ we have
\begin{align*}
\norm{\sum_{n=1}^\infty a_n \, \bphi_n}_\rho 
&=\norm{\left( \sum_{n=1}^\infty \abs{a_n}^2 \abs{\bphi_n}^2\right)^{1/2}}_\rho\\
&\le \norm{\left( \sum_{n=1}^\infty \abs{a_n}^2 \abs{\bpsi_n}^2\right)^{1/2}}_\rho
\approx \norm{\sum_{n=1}^\infty a_n \, \bpsi_n}_\rho.\qedhere
\end{align*}
\end{proof}

\begin{lemma}\label{lem:NewOperators}
Let $\rho$ be an absolutely continuous function norm over a $\sigma$-finite measure space. Let  $\Psi=(\bpsi_n)_{n=1}^\infty$ be a complemented unconditional basis sequence in $\LL_\rho$ with projecting functionals $\Psi^*=(\bpsi_n^*)_{n=1}^\infty$ regarded as functions in $\LL_{\rho^*}$. Let   $\Phi=(\bphi_n)_{n=1}^\infty$ be a pairwise disjointly supported sequence in $\LL_\rho$ and $(\bphi_n^*)_{n=1}^\infty$ be a pairwise disjointly supported sequence in $\LL_{\rho^*}$. Suppose that $\abs{\bphi_n} \le  \abs{\bpsi_n}$ and $\abs{\bphi_n^*} \le  \abs{\bpsi_n^*}$ for all $n\in\NN$. Then, we have the following.
\begin{enumerate}[label=(\roman*),leftmargin=*,widest=ii]
\item There are bounded linear maps $R$, $S$, $T\colon \LL_\rho\to \LL_\rho$ given by
\begin{align*}
R(f)&=\sum_{n=1}^\infty \langle \bpsi_n^*, f \rangle \bphi_n,\\
S(f)&=\sum_{n=1}^\infty \langle \bphi_n^*, f \rangle \bpsi_n,\\
T(f)&=\sum_{n=1}^\infty \langle \bphi_n^*, f \rangle \bphi_n.
\end{align*}
for all $f\in\NN$. 
\item\label{it:NewOperatorsB} If $\inf_n  \abs{\langle \bphi_n^*, \bphi_n\rangle}>0$, then $\Phi$ is a  well-complemented  basic sequence equivalent to $\Psi$. Moreover, there are scalars $(\lambda_n)_{n=1}^\infty$ such that $(\lambda_n \bphi_n^*)_{n=1}^\infty$ is a sequence of good projecting functionals for $\Phi$. 
\item\label{it:NewOperatorsC} If, in addition, $\Psi^*$ is disjointly supported, then $\Phi$ and $\Psi$ and congruent.
\end{enumerate}
\end{lemma}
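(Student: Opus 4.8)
The plan is to establish part~(i) first, and then read off parts~(ii) and~(iii) from the algebraic relations among $R$, $S$, and $T$. For~(i), let $P\colon\LL_\rho\to\LL_\rho$ be the bounded projection $P(f)=\sum_n\langle\bpsi_n^*,f\rangle\bpsi_n$ onto $[\Psi]$. Since $\Phi$ is pairwise disjointly supported with $\abs{\bphi_n}\le\abs{\bpsi_n}$, Lemma~\ref{lem:RestrictOp} furnishes a bounded operator $D\colon[\Psi]\to\LL_\rho$ with $D(\bpsi_n)=\bphi_n$, and then $R=D\circ P$ does the job. Once $S$ is known to be bounded, its range lies in $[\Psi]$, so $T=D\circ S$ is bounded as well; hence everything reduces to the boundedness of $S$.

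To prove $S$ bounded I would fix $f\in\LL_\rho$, put $c_n=\langle\bphi_n^*,f\rangle$, and observe that, since $\abs{\bphi_n^*}\le\abs{\bpsi_n^*}$ and the $\bphi_n^*$ are supported on pairwise disjoint sets $E_n$, we have $\abs{c_n}\le\int_{E_n}\abs{\bpsi_n^*}\abs{f}\,d\mu$. Combining the quadratic lattice estimate of Theorem~\ref{thm:Kahane} for the complemented unconditional sequence $\Psi$ with the disjointness of the $E_n$ and the boundedness of $P$ should give $\sup_A\norm{\sum_{n\in A}c_n\bpsi_n}_\rho\lesssim\norm{f}_\rho$ over all finite $A\subseteq\NN$. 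As $\rho$ is absolutely continuous, $\Psi$ is boundedly complete by Theorem~\ref{thm:CharAbsCont}, so $\sum_n c_n\bpsi_n$ converges in $\LL_\rho$; this is $S(f)$, and the same bound controls its norm. This is the step I expect to be the main obstacle: unlike $R$ and $T$, the operator $S$ is not manifestly a composition of maps already available, and establishing its boundedness forces one to combine the quadratic estimate with bounded completeness while genuinely exploiting the pairwise disjointness of the $\bphi_n^*$.

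For~(ii), the hypothesis $\delta:=\inf_n\abs{\langle\bphi_n^*,\bphi_n\rangle}>0$ makes $\lambda_n:=\langle\bphi_n^*,\bphi_n\rangle^{-1}$ a bounded sequence, so inserting the factor $\lambda_n$ into the $n$-th summand of $S$ and of $T$ preserves boundedness by the arguments of~(i). One checks $\langle\bphi_n^*,\bphi_m\rangle=0$ for $n\ne m$, whence $\widetilde S(f):=\sum_n\lambda_n\langle\bphi_n^*,f\rangle\bpsi_n$ satisfies $\widetilde S(\bphi_m)=\bpsi_m$; together with $R|_{[\Psi]}=D$ this shows $\Psi$ and $\Phi$ are equivalent. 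Likewise $\widetilde T(f):=\sum_n\lambda_n\langle\bphi_n^*,f\rangle\bphi_n=D(\widetilde S(f))$ is a bounded operator with $\widetilde T(\bphi_m)=\bphi_m$ and $\widetilde T(\LL_\rho)=[\Phi]$, hence a bounded projection of $\LL_\rho$ onto $[\Phi]$ whose associated functionals are $(\lambda_n\bphi_n^*)_n$; these are pairwise disjointly supported, so $\Phi$ is a well-complemented basic sequence with good projecting functionals $(\lambda_n\bphi_n^*)_n$.

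For~(iii), when $\Psi^*$ is also disjointly supported $\Psi$ is itself well-complemented, and we have at hand the projections $P$ onto $[\Psi]$ and $\widetilde T$ onto $[\Phi]$. I would identify the quotients $\LL_\rho/[\Psi]$ and $\LL_\rho/[\Phi]$ with $\Ker P$ and $\Ker\widetilde T$, check that these are isomorphic (both being cut out by disjointly supported integral functionals), and then invoke Lemma~\ref{lem:EquivCong} (or Corollary~\ref{prop:SameFunctionals}) to upgrade the equivalence obtained in~(ii) to a congruence, realized explicitly by $u\mapsto D(P(u))+S_0(u-P(u))$ for a suitable isomorphism $S_0\colon\Ker P\to\Ker\widetilde T$. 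The only delicate points here are the verification that the mixed pairings $\langle\bphi_n^*,\bphi_m\rangle$ vanish for $n\ne m$ and that the two kernels indeed match up.
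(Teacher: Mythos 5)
Your part~(ii) is essentially the paper's own argument (your $\widetilde S$ and $\widetilde T$ are the operators $U$ and $Q$ used there), and the unverified orthogonality $\langle\bphi_n^*,\bphi_m\rangle=0$ for $n\ne m$ is used implicitly by the paper as well, so that is not a gap relative to it; your handling of $R$ and of $T=D\circ S$ also matches. The first genuine gap is the boundedness of $S$, which is the heart of (i). The ingredients you name --- Theorem~\ref{thm:Kahane} applied to $\Psi$, disjointness of the $E_n$, and boundedness of $P$ --- do not combine to give $\sup_m\norm{\sum_{n=1}^m\langle\bphi_n^*,f\rangle\bpsi_n}_\rho\lesssim\norm{f}_\rho$: the disjointness you must exploit concerns functions living in $\LL_{\rho^*}$, so the quadratic estimate has to be applied on the dual side, to $\Psi^*$, not to $\Psi$. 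The paper's mechanism is to note that the partial-sum operator $f\mapsto\sum_{n=1}^m\langle\bphi_n^*,f\rangle\bpsi_n$ is the preadjoint of $f^*\mapsto\sum_{n=1}^m\langle f^*,\bpsi_n\rangle\bphi_n^*$, and to bound the latter uniformly in $m$ by applying Lemma~\ref{lem:RestrictOp} in $\LL_{\rho^*}$ to the truncated sequences $(\bpsi_n^*)_{n=1}^m$, which are complemented unconditional with projecting functionals $(\bpsi_n)_{n=1}^m$; the truncation is essential because $\Psi$ need not be a sequence of projecting functionals for $\Psi^*$ in the infinite sense. Your endgame (uniform partial-sum bounds plus bounded completeness of $\Psi$ from Theorem~\ref{thm:CharAbsCont}) is exactly the paper's, but the dualization that actually produces the uniform bound is missing; if you try to complete your primal estimate by testing against $g$ with $\rho^*(g)\le1$ and using H\"older, you are forced back to precisely this dual-side estimate for $\Psi^*$.

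The second gap is in (iii). Your route through Lemma~\ref{lem:EquivCong} requires an isomorphism $\Ker P\simeq\Ker\widetilde T$, and ``both being cut out by disjointly supported integral functionals'' is not an argument: $\bigcap_n\Ker\bpsi_n^*$ and $\bigcap_n\Ker\bphi_n^*$ are in general different subspaces with no apparent isomorphism between them, and Corollary~\ref{prop:SameFunctionals} does not apply to the pair $(\Psi,\Phi)$ as it stands because $(\lambda_n\bphi_n^*)_{n=1}^\infty$ is not termwise proportional to $(\bpsi_n^*)_{n=1}^\infty$. The idea you are missing --- and it is the whole content of the paper's proof of (iii) --- is to rotate $\Phi$: choose a unimodular measurable $\varepsilon$ with $\varepsilon\bpsi_n^*\bphi_n\ge 0$ for all $n$ (a single $\varepsilon$ works for every $n$ precisely because $\Psi^*$ is disjointly supported); then $\mu_n:=\langle\bpsi_n^*,\varepsilon\bphi_n\rangle\ge\abs{\langle\bphi_n^*,\bphi_n\rangle}$ by the domination $\abs{\bphi_n^*}\le\abs{\bpsi_n^*}$, so $\Phi_\varepsilon=(\varepsilon\bphi_n)_{n=1}^\infty$ is a complemented basic sequence whose projecting functionals $(\bpsi_n^*/\mu_n)_{n=1}^\infty$ are termwise multiples of $\Psi^*$. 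Corollary~\ref{prop:SameFunctionals} then applies with the kernels literally equal, giving that $\Phi_\varepsilon$ is congruent to $\Psi$, and since $f\mapsto\varepsilon f$ is an automorphism of $\LL_\rho$, so is $\Phi$. Without this device, or some substitute proof of the quotient isomorphism, your argument for (iii) does not go through.
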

\begin{proof}
The existence of $R$ is straightforward consequence of Lemma~\ref{lem:RestrictOp}. It also follows from  Lemma~\ref{lem:RestrictOp} that the existence of $S$ implies the existence of $T$. Before addressing the proof of the existence of $S$, we note  that, given $f^*\in \LL_{\rho^*}$, the series $\sum_{n=1}^\infty \langle f^*,\bpsi_n \rangle \bpsi_n^*$ might not converge in the norm-topolopy. So, $\Psi$ might not be a sequence of projecting functionals for $\Psi^*$. To circumvent this drawback, we use that, for each $m\in\NN$, $(\bpsi_n^*)_{n=1}^m$ is a complemented unconditional basic sequence with projecting functionals $(\bpsi_n)_{n=1}^m$. Hence, there is a constant $C$ such that
\[
\norm{ \sum_{n=1}^m \langle f^*, \bpsi_n \rangle \bphi_n^*}\le C \norm{f^*} 
\]
for all $m\in\NN$ and $f^*\in \LL_{\rho^*}$. Since the operator
\[
f^*\mapsto  \sum_{n=1}^m \langle f^*, \bpsi_n \rangle \bphi^*_n
\]
is the dual operator of 
\[
f\mapsto  \sum_{n=1}^m \langle \bphi^*_n, f \rangle \bpsi_n,
\]
we have
\[
\norm{  \sum_{n=1}^m \langle \bphi^*_n, f \rangle \bpsi_n} \le C\norm{f}, \quad m\in\NN, \, f\in \XX.
\]
By Theorem~~\ref{thm:CharAbsCont}, $\Psi$ is boundedly complete. Hence, $ \sum_{n=1}^\infty \langle \bphi^*_n, f \rangle \bpsi_n$ converges for any $f\in\XX$, and 
\[
\norm{  \sum_{n=1}^\infty \langle \bphi^*_n, f \rangle \bpsi_n} \le C\norm{f}.
\]
This gives the existence of $S$. 

To prove \ref{it:NewOperatorsB} we set $\lambda_n=\langle \bphi_n^*,  \bphi_n\rangle$, and we consider the  bounded linear operators $U$, $Q\colon  \LL_\rho \to  \LL_\rho$ given by
\begin{align*}
U(f)&=\sum_{n=1}^\infty \frac{1}{\lambda_n} \langle \bphi_n^*, f\rangle\, \bpsi_n,\\
Q(f)&=\sum_{n=1}^\infty \frac{1}{\lambda_n} \langle \bphi_n^*, f\rangle\, \bphi_n.
\end{align*}
Since  $Q(\bphi_n)=\bphi_n$ for all $n\in\NN$, $Q$ is a projection onto the closed subspace generated by $\Phi$, and $(\bphi_n/\lambda_n)_{n=1}^\infty$ is a sequence of projecting functionals for $\Phi$. We have  $S(\bpsi_n)=\bphi_n$ and $U( \bphi_n)=\bpsi_n$ and  for every $n\in\NN$. Consequently, $\Psi$ and $\Phi$ are equivalent. 

To prove \ref{it:NewOperatorsC}, consider a measurable function $\varepsilon\colon \Omega \to \FF$ such that $\abs{\varepsilon(\omega)}=1$ and $ \varepsilon(\omega) \bpsi_n^*(\omega) \bphi_n(\omega)\ge 0$ for all $\omega\in\Omega$. We have 
\[
\mu_n:=\langle \bpsi_n^*, \varepsilon \bphi_n\rangle\ge \lambda_n, \quad n\in\NN.
\] 
Since the map $f\mapsto \varepsilon f$ is an isomorphism on $L_\rho$, there is a bounded linear operator  $V\colon  \LL_\rho \to  \LL_\rho$ given by
\[
V(f)=\sum_{n=1}^\infty \frac{1}{\mu_n} \langle   \bpsi_n^*, f\rangle\,  \varepsilon \bphi_n.
\]
We infer that $\Phi_\varepsilon:=(\varepsilon\bphi_n)_{n=1}^\infty$ is a complemented basic sequence with projecting functionals  $( \bpsi_n^*/\mu_n)_{n=1}^\infty$. Since $\Phi$ and $\Phi_\varepsilon$ are congruent, $\Phi_\varepsilon$ and $\Psi$ are equivalent. Consequently, by Corollary~\ref{prop:SameFunctionals},  $\Phi_\varepsilon$ and  $\Psi$ are congruent. 
\end{proof}

Theorem~\ref{thm:Cotype:Absolutely} below and the subsequent Corollary~\ref{thm:Type} genereralize the result from  \cite{KadPel1962} that any complemented semi-normalized unconditional basis in $L_p$, $1<p<\infty$, is either equivalent to the canonical basis of $\ell_2$ or has a subsequence equivalent to a pairwise disjointly supported sequence.  To help the reader understand their statements, we point out that  any pairwise disjointly supported sequence of $L_p$, $1\le p <\infty$, is  well-complemented  (see \cite{KadPel1962}*{Proof of Lemma 1}), but this property does not hold in general function spaces.

\begin{theorem}\label{thm:Cotype:Absolutely}
Let $\rho$ be a function norm over a finite measure space $(\Omega,\Sigma,\mu)$. Suppose that $L_2(\mu)\subseteq\LL_\rho$ and that $\LL_\rho$ has Rademacher cotype $2$. Let $\Psi=(\bpsi_n)_{n=1}^\infty$ be a semi-normalized complemented unconditional basic sequence in $\LL_\rho$ with projecting functionals $\Psi^*=(\bpsi_n^*)_{n=1}^\infty$.
\begin{enumerate}[label=(\roman*),leftmargin=*,widest=ii]
\item\label{it:Cotype:AbsolutelyA} If $\inf_n \norm{\bpsi_n^*}_2>0$, then $\Psi$ is equivalent to the unit vector system of $\ell_2$. Oppositely, 
\item\label{it:Cotype:AbsolutelyB}  if $\inf_n \norm{\bpsi_n^*}_2=0$, then $\Psi$ has a subsequence equivalent to a  well-complemented  basic sequence  $\Phi=(\bphi_n)_{n=1}^\infty$ escaping to infinity.  Moreover,  both $\Phi$ and its sequence  $\Phi^*$ of good projecting functionals consist of simple functions. If $\rho^*$ is absolutely continuous, we can make $\Phi$ be congruent to $\Psi$, and make $\Phi^*$ escape to infinity.\end{enumerate}
\end{theorem}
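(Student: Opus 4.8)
The plan is to dualize the strategy behind Theorem~\ref{thm:Type:Absolutely}, replacing $\LL_\rho$ by its dual. Since $\LL_\rho$ has cotype $2$ it has nontrivial cotype, so $\rho$ is absolutely continuous by Theorem~\ref{thm:CotypeAC} and we identify $(\LL_\rho)^*=\LL_{\rho^*}$; dualizing $L_2(\mu)\subseteq\LL_\rho$ gives $\LL_{\rho^*}\subseteq L_2(\mu)$, and $\Psi^*$ (being equivalent to the coordinate functional sequence of $\Psi$) is a semi-normalized unconditional basic sequence in $\LL_{\rho^*}$. The partial-sum operators $S_m(f)=\sum_{n=1}^m\bpsi_n^*(f)\,\bpsi_n$ of $\Psi$ are uniformly bounded (by the unconditionality constant of $\Psi$ times the norm of the projection onto $[\Psi]$), and $S_m^*(g)=\sum_{n=1}^m\langle g,\bpsi_n\rangle\,\bpsi_n^*$; moreover any subsequence of $\Psi$ is again a complemented unconditional basic sequence with the corresponding subsequence of $\Psi^*$ as projecting functionals.

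For \ref{it:Cotype:AbsolutelyA}: that $\Psi$ dominates the unit vector system of $\ell_2$ is Lemma~\ref{lem:Embedding}\ref{it:EmbeddingB} with $\rho_b=\rho$, using only cotype $2$ of $\LL_\rho$ and that $\Psi$ is semi-normalized. For the reverse, apply Lemma~\ref{lem:Embedding}\ref{it:EmbeddingB} inside $\LL_{\rho^*}$ with $\rho_b=\norm{\,\cdot\,}_2$: since $\LL_{\rho^*}\subseteq L_2(\mu)$, $L_2$ has cotype $2$, and $\inf_n\norm{\bpsi_n^*}_2>0$, we get a constant $C$ with $\big(\sum_{n=1}^m|b_n|^2\big)^{1/2}\le C\,\norm{\sum_{n=1}^m b_n\,\bpsi_n^*}_{\rho^*}$ for all scalars. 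Taking $b_n=\langle g,\bpsi_n\rangle$ and using $\norm{S_m^*}\lesssim 1$, then letting $m\to\infty$, yields $\big(\sum_n|\langle g,\bpsi_n\rangle|^2\big)^{1/2}\lesssim\norm{g}_{\rho^*}$ for every $g\in\LL_{\rho^*}$; combining this with the duality formula $\norm{\sum_n a_n\bpsi_n}_\rho=\sup_{\norm{g}_{\rho^*}\le1}\big|\sum_n a_n\langle g,\bpsi_n\rangle\big|$ and the Cauchy--Schwarz inequality gives $\norm{\sum_n a_n\bpsi_n}_\rho\lesssim\big(\sum_n|a_n|^2\big)^{1/2}$. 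Hence $\Psi$ is equivalent to the unit vector system of $\ell_2$.

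For \ref{it:Cotype:AbsolutelyB} I would perform a gliding-hump construction \emph{inside $\LL_\rho$}. Passing to a subsequence we may assume $\norm{\bpsi_n^*}_2\to0$, hence $\norm{\bpsi_n^*}_1\to0$; since $\int_\Omega|\bpsi_n^*\bpsi_n|\,d\mu$ lies between $|\langle\bpsi_n^*,\bpsi_n\rangle|=1$ and $\norm{\bpsi_n^*}_{\rho^*}\norm{\bpsi_n}_\rho\lesssim1$, and the sets $H_n=\{|\bpsi_n^*|\ge\norm{\bpsi_n^*}_1^{1/2}\}$ have $\mu(H_n)\le\norm{\bpsi_n^*}_1^{1/2}\to0$ with $\int_{\Omega\setminus H_n}|\bpsi_n^*\bpsi_n|\,d\mu\le\norm{\bpsi_n^*}_1^{1/2}C_\Omega\norm{\bpsi_n}_\rho\to0$ by~\ref{FN:L1B}, we get $\int_{H_n}\bpsi_n^*\bpsi_n\,d\mu\to1$; thus the interaction densities concentrate on sets of vanishing measure. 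A diagonal extraction --- choose $n_k$ so that $\int_{H_{n_k}}|\bpsi_{n_j}^*\bpsi_{n_j}|\,d\mu<2^{-k}$ for $j<k$, which is possible because each $|\bpsi_{n_j}^*\bpsi_{n_j}|$ is a fixed $L_1$ function and $\mu(H_n)\to0$ --- yields pairwise disjoint sets $F_k=H_{n_k}\setminus\bigcup_{j>k}H_{n_j}$ with $\int_{F_k}\bpsi_{n_k}^*\bpsi_{n_k}\,d\mu\to1$; shrinking $F_k$ to $D_k=F_k\cap\{|\bpsi_{n_k}|\ge\norm{\bpsi_{n_k}^*}_1^{-1/2}\}$ keeps this limit (the discarded part contributes at most $\norm{\bpsi_{n_k}^*}_1^{1/2}$) and makes $|\bpsi_{n_k}|$ escape to infinity on $D_k$. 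Finally, using that $\rho$ is absolutely continuous (so simple functions are dense in $\LL_\rho$) choose simple $\bphi_k$ supported in $D_k$ with $|\bphi_k|\le|\bpsi_{n_k}|$, escaping to infinity, and $\norm{\bpsi_{n_k}\chi_{D_k}-\bphi_k}_\rho$ small, and then --- working in $L_2$, where $\bpsi_{n_k}^*\chi_{D_k}$ lives and where $\bphi_k$ also lies because it is simple --- choose simple $\bphi_k^*$ supported in $D_k$ with $|\bphi_k^*|\le|\bpsi_{n_k}^*|$ and $\norm{\bpsi_{n_k}^*\chi_{D_k}-\bphi_k^*}_2$ so small that, splitting $\langle\bphi_k^*,\bphi_k\rangle$ into $\int_{D_k}\bpsi_{n_k}^*\bpsi_{n_k}\,d\mu$ plus two error terms (one controlled in $L_2$, one by the $\LL_{\rho^*}$--$\LL_\rho$ H\"older inequality), $\langle\bphi_k^*,\bphi_k\rangle\to1$. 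Now $(\bphi_k)_k$ and $(\bphi_k^*)_k$ are pairwise disjointly supported, dominated by $(\bpsi_{n_k})_k$ and $(\bpsi_{n_k}^*)_k$ respectively, and $\inf_k|\langle\bphi_k^*,\bphi_k\rangle|>0$, so Lemma~\ref{lem:NewOperators}\ref{it:NewOperatorsB} (applied to the subsequence $(\bpsi_{n_k})_k$) shows $\Phi=(\bphi_k)_k$ is a well-complemented basic sequence equivalent to that subsequence of $\Psi$, escaping to infinity, with $\Phi$ and its good projecting functionals consisting of simple functions.

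For the final claim, when $\rho^*$ is absolutely continuous $\LL_{\rho^*}$ is order continuous, so Lemma~\ref{lem:Embedding}\ref{it:EmbeddingC} (with $\rho_b=\norm{\,\cdot\,}_2$) followed by Lemma~\ref{lem:Embedding}\ref{it:EmbeddingD}, applied in $\LL_{\rho^*}$ after the small $\rho^*$-perturbation bringing the tails inside the relevant sets, produces along a subsequence disjointly supported simple functions $\widetilde{\bpsi}_k^*$ escaping to infinity with $|\widetilde{\bpsi}_k^*|\le|\bpsi_{n_k}^*|$ and $\norm{\bpsi_{n_k}^*-\widetilde{\bpsi}_k^*}_{\rho^*}\to0$; by Lemma~\ref{lem:SPD}, after a further subsequence there is $\widetilde{\Psi}=(\widetilde{\bpsi}_k)_k$ congruent to $(\bpsi_{n_k})_k$ with $(\widetilde{\bpsi}_k^*)_k$ as projecting functionals, and $\norm{\widetilde{\bpsi}_k^*}_2\le\norm{\widetilde{\bpsi}_k^*-\bpsi_{n_k}^*}_2+\norm{\bpsi_{n_k}^*}_2\to0$ still holds. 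Re-running the construction of the previous paragraph for $\widetilde{\Psi}$ --- now with nothing to disjointify, since the $\widetilde{\bpsi}_k^*$ are already pairwise disjoint, simple and escaping --- and using Lemma~\ref{lem:NewOperators}\ref{it:NewOperatorsB} together with~\ref{it:NewOperatorsC} (valid because the projecting functionals of $\widetilde{\Psi}$ are disjointly supported) gives a $\Phi$ congruent to a subsequence of $\Psi$ whose good projecting functionals escape to infinity. The main obstacle is exactly part~\ref{it:Cotype:AbsolutelyB} in this generality: without absolute continuity of $\rho^*$ one cannot transplant the Kadec--Pe{\l}czy{\'n}ski disjointification into the dual, since Lemma~\ref{lem:Embedding}\ref{it:EmbeddingD} needs an order-continuous norm; the work-around is to disjointify in $\LL_\rho$, where the vanishing of $\norm{\bpsi_n^*}_{L_1}$ concentrates the interaction densities, and to approximate the dual pieces in the $L_2$-norm against the already-simplified $\bphi_k$, which sidesteps the fact that the $\bpsi_n$ themselves need not be square-integrable.
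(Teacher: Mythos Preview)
Your argument is correct. Part~\ref{it:Cotype:AbsolutelyA} and the absolutely-continuous-$\rho^*$ clause of part~\ref{it:Cotype:AbsolutelyB} are essentially the paper's proof: the paper invokes the ``reflexivity principle for basic sequences'' where you spell out the duality via $S_m^*$ and Cauchy--Schwarz, and in the last clause both you and the paper use Lemma~\ref{lem:Embedding}\ref{it:EmbeddingC}+\ref{it:EmbeddingD} in $\LL_{\rho^*}$, Lemma~\ref{lem:SPD}, and then Lemma~\ref{lem:NewOperators}\ref{it:NewOperatorsB}+\ref{it:NewOperatorsC}.

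For the general case of \ref{it:Cotype:AbsolutelyB}, however, you take a genuinely different route. The paper first applies Lemma~\ref{lem:Embedding}\ref{it:EmbeddingC} to $\Psi^*$ \emph{in $\LL_{\rho^*}$} (which does not need $\rho^*$ absolutely continuous, only $L_\infty\subseteq\LL_{\rho^*}$) to get $\norm{\bpsi_n^*-\bpsi_n^*\chi_{A_n}}_{\rho^*}\to 0$; then it uses Lemma~\ref{lem:SPD} to force $\supp(\bpsi_n^*)\subseteq A_n$, uses absolute continuity of $\rho$ together with Lemma~\ref{lem:SP} and the $\wstartop$closedness result Proposition~\ref{prop:WSC} to force $\supp(\bpsi_n)\subseteq\Omega\setminus A_{n+1}$ while keeping the new functionals supported in $A_n$, and only then produces $\bphi_n$, $\bphi_n^*$ and applies Lemma~\ref{lem:NewOperators}\ref{it:NewOperatorsB}. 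You bypass the two perturbation lemmas and the $\wstartop$closedness argument entirely: you work with the $L_1$ interaction densities $\bpsi_n^*\bpsi_n$, use a Chebyshev cut on $\abs{\bpsi_n^*}$ to locate concentration sets $H_n$ of small measure, disjointify by a diagonal extraction in $L_1$, then cut again on $\abs{\bpsi_{n_k}}$ to force escape to infinity, and finally approximate the functional pieces in $L_2$ (where they live, since $\LL_{\rho^*}\subseteq L_2$) with accuracy tuned to the already-chosen simple $\bphi_k$. Both arguments terminate at Lemma~\ref{lem:NewOperators}\ref{it:NewOperatorsB}. Your approach is more elementary and self-contained (no Proposition~\ref{prop:WSC}, no small-perturbation machinery in this part); the paper's is more modular once that machinery is available and fits the pattern of Lemma~\ref{lem:Embedding} more cleanly.
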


\begin{proof}
By Theorem~\ref{thm:CotypeAC}, $\LL_\rho$ is absolutely continuous. Hence, we can regard $\Psi^*$ as an unconditional basic sequence in $\LL_{\rho^*}$.

In the case \ref{it:Cotype:AbsolutelyA}, applying Lemma~\ref{lem:Embedding}\ref{it:EmbeddingB} with $\rho_b=\norm{\cdot}_2$ gives that $\Psi^*$ dominates the unit vector system of $\ell_2$.  Hence, by the reflexivity principle for basic sequences in Banach spaces (see \cite{AlbiacKalton2016}*{Corollary  3.2.4}), the unit vector system of $\ell_2$ dominates $\Psi$. In turn,  applying Lemma~\ref{lem:Embedding}\ref{it:EmbeddingB} with $\rho_b=\rho$, gives that $\Psi$  dominates the unit vector system of $\ell_2$.

In the  case \ref{it:Cotype:AbsolutelyB}, an application of Lemma~\ref{lem:Embedding}\ref{it:EmbeddingC} gives an increasing map $(n_k)_{k=1}^\infty$ and a  non-increasing sequence $(A_k)_{k=1}^\infty$ such that 
\[
\lim_k \mu(A_k)=0 \quad \mbox{ and } \quad
\lim_k \norm{ \bpsi^*_{n_k} - \bpsi^*_{n_k}\chi_{A_{k}} }_{\rho^*}=0.
\]
Therefore, we can assume, passing to a subsequence, that  there is a non-increasing sequence $(A_n)_{n=1}^\infty$ with $\lim_n \mu(A_n)=0$ and
 \[
\sum_{n=1}^\infty \norm{\bpsi_n^*\chi_{A_n}-\bpsi_n^*}_{\rho^*}\norm{\bpsi_n}_{\rho}<1.
\]
Hence, by Lemma~\ref{lem:SPD}, we can suppose that $\supp(\bpsi_n^*) \subseteq A_n$ for all $n\in\NN$.  Since $\rho$ is absolutely continuous, passing to a further subsequence we can suppose that
\[
\sum_{n=1}^\infty \norm{\bpsi_n \chi_{A_{n+1}}}_\rho\norm{\bpsi_n^*}_{\rho^*}<1.
\]
If $k>n$, then $\supp(\bpsi_k^*)\subseteq A_{n+1}$. Consequently,
\[
\langle \bpsi_k^*, \bpsi_n\chi_{\Omega\setminus A_{n+1}} \rangle=0.
\]
By Lemma~\ref{lem:SP}, $\Psi$ is congruent to $\Phi:=(\bpsi_n\chi_{\Omega\setminus A_{n+1}})_{n=1}^\infty$, and there are projecting functionals $(\bphi_n^*)_{n=1}^\infty$ for $\Phi$ with
\[
\bphi_n^*\in[\bpsi_k^* \colon k\ge n]_{w^*}, \quad n\in\NN.
\]
By Proposition~\ref{prop:WSC}, $\supp(\bphi_n^*)\subseteq A_n$ for all $n\in\NN$. 

Summing up, we can assume that the complemented unconditional basic sequence $\Psi$ and its coordinate functionals $\Psi^*$ satisfy
\[
\supp(\bpsi_n)\subseteq \Omega\setminus A_{n+1}\quad \mbox{ and }\quad
\supp(\bpsi_n^*)\subseteq A_n, \quad n\in\NN,
\]
for a suitable non-increasing sequence $(A_n)_{n=1}^\infty$ with $\lim_n \mu(A_n)=0$, and we can forget other terminology used so far in the proof of  \ref{it:Cotype:AbsolutelyB}.

Use  Lemma~\ref{lem:Embedding}\ref{it:EmbeddingD} to pick, passing to a further subsequence, a pairwise disjointly supported sequence $\Phi=(\bphi_n)_{n=1}^\infty$ consisting of simple functions escaping to infinity such that  $\abs{\bphi_n} \le \abs{\bpsi_n}\chi_{A_n\setminus A_{n+1}}$ and 
\[
\norm{\bpsi_n \chi_{A_n\setminus A_{n+1}}-\bphi_n}_\rho < \frac{1}{2\norm{\bpsi_n^*}}_{\rho^*}
\]
 for all $n\in\NN$. Since
 \[
 \langle \bpsi_n^*,  \bpsi_n\chi_{A_n\setminus A_{n+1}}\rangle= \langle \bpsi_n^*,  \bpsi_n\rangle=1,
 \]
$\abs{ \langle \bpsi_n^*,  \bphi_n\rangle}> 1/2$.
Consequently, for each $n\in\NN$ there is a simple function $\bphi_n^*$ with $\abs{\bphi_n^*} \le \abs{\bpsi_n^*}\chi_{A_n\setminus A_{n+1}}$ and  $\abs{\langle \bphi_n^*,  \bphi_n\rangle}\ge 1/2$.

By Lemma~\ref{lem:NewOperators}\ref{it:NewOperatorsB}, $\Psi$ and $\Phi$ are equivalent,  $\Phi$ is  well-complemented , and there is a sequence $(a_n)_{n=1}^\infty$ such that $(a_n \bphi_n^*)_{n=1}^\infty$ are good projecting functionals for $\Phi$.

In  the case  that $\rho^*$ is absolutely continuous, we prove \ref{it:Cotype:AbsolutelyB} by means of an argument that differs from the previous one from the beginning. Now we combine Lemma~\ref{lem:Embedding}\ref{it:EmbeddingB}, Lemma~\ref{lem:Embedding}\ref{it:EmbeddingC}  and Lemma~\ref{lem:Embedding}\ref{it:EmbeddingD} to claim, passing to a subsequence, that there are pairwise disjointly supported  simple functions  $(\bphi_n^*)_{n=1}^\infty$ escaping to infinity such that
\[
\sum_{n=1}^\infty \norm{\bpsi_n^*-\bphi_n^*}_{\rho^*} \norm{\bpsi_n}_\rho<1.
\]
By Lemma~\ref{lem:SPD}, we can suppose that $\Psi^*$ consists of  pairwise disjointly supported  simple functions escaping to infinity. As before, passing to a further subsequence, we choose a pairwise disjointly supported sequence $\Phi=(\bphi_n)_{n=1}^\infty$ consisting of simple functions escaping to infinity such that $\abs{\bphi_n} \le \abs{\bpsi_n}$ and $\abs{ \langle \bpsi_n^*,  \bphi_n\rangle}> 1/2$ for all $n\in\NN$. As above, we apply  Lemma~\ref{lem:NewOperators}\ref{it:NewOperatorsB} with the particularity that now $\Phi^*=\Psi^*$. Since a suitable dilation of $\Psi^*$ is a sequence of projecting functionals for $\Phi$, $\Phi$ and $\Psi$ are congruent by Corollary~\ref{prop:SameFunctionals}.
\end{proof}

\begin{corollary}\label{thm:Type}
Let $\rho$ be a function norm over a finite measure space $(\Omega,\Sigma,\mu)$. Suppose that $\LL_\rho \subseteq L_2(\mu)$, and that $\LL_\rho$ has Rademacher type $2$. Let $\Psi=(\bpsi_n)_{n=1}^\infty$ be a semi-normalized complemented unconditional basic sequence in $\LL_\rho$ with $\inf_n \norm{\bpsi_n}_2=0$. Then, $\Psi$ has a subsequence congruent to a  well-complemented  basic sequence  $\Phi=(\bphi_n)_{n=1}^\infty$.  Moreover, there is a sequence $\Phi^*$ of good projecting functionals for $\Phi$ such that both $\Phi$ and $\Phi^*$ consist of simple functions escaping to infinity.
\end{corollary}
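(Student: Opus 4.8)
The plan is to deduce this from Theorem~\ref{thm:Cotype:Absolutely}\ref{it:Cotype:AbsolutelyB} after translating the hypotheses. We are given that $\LL_\rho\subseteq L_2(\mu)$ and $\LL_\rho$ has Rademacher type $2$; by Theorem~\ref{thm:RadConvBis}, $\LL_\rho$ is then $2$-convex with nontrivial concavity, so in particular $\LL_\rho$ has nontrivial cotype and hence $\rho$ is absolutely continuous by Theorem~\ref{thm:CotypeAC}, which lets us identify $(\LL_\rho)^*$ with $\LL_{\rho^*}$. First I would verify that $\rho^*$ satisfies the hypotheses of Theorem~\ref{thm:Cotype:Absolutely} with respect to the conjugate exponent picture. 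Since $\LL_\rho$ has type $2$, Theorem~\ref{thm:DualityTypeCotype} (with $r=2$, $r'=2$) shows $\LL_{\rho^*}$ has cotype $2$, and in fact $\rho^*$ is absolutely continuous because $\LL_\rho$ is reflexive (type $2$ plus nontrivial cotype forces reflexivity; alternatively, nontrivial cotype of $\LL_{\rho^*}$ gives absolute continuity of $\rho^*$ directly via Theorem~\ref{thm:CotypeAC}). Moreover the inclusion $\LL_\rho\subseteq L_2(\mu)$ dualizes, on a finite measure space, to $L_2(\mu)\subseteq\LL_{\rho^*}$: indeed $L_2(\mu)^*=L_2(\mu)$ and the inclusion map $\LL_\rho\hookrightarrow L_2(\mu)$ is bounded (closed graph, or the explicit constant coming from finiteness of $\mu$ and axiom~\ref{FN:L1B}), so its adjoint gives a bounded inclusion $L_2(\mu)\hookrightarrow\LL_{\rho^*}$.

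Next I would transfer the basic sequence. Our $\Psi=(\bpsi_n)_{n=1}^\infty$ is a semi-normalized complemented unconditional basic sequence in $\LL_\rho$; pick projecting functionals $\Psi^*=(\bpsi_n^*)_{n=1}^\infty\subseteq\LL_{\rho^*}$. Then $\Psi^*$ is a semi-normalized complemented unconditional basic sequence in $\LL_{\rho^*}$ whose projecting functionals are (a bounded sequence equivalent to) $\Psi$ itself, viewed inside $(\LL_{\rho^*})^*=\LL_\rho$. The hypothesis $\inf_n\norm{\bpsi_n}_2=0$ is exactly the statement that the projecting functionals of the basic sequence $\Psi^*$ in $\LL_{\rho^*}$ have $\inf_n\norm{\cdot}_2=0$, which is the branch \ref{it:Cotype:AbsolutelyB} of Theorem~\ref{thm:Cotype:Absolutely} applied to $\LL_{\rho^*}$. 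Since $\rho^*$ is absolutely continuous and its dual $(\rho^*)^*=\rho$ is also absolutely continuous, we are in the favorable sub-case of \ref{it:Cotype:AbsolutelyB}: there is a subsequence of $\Psi^*$ congruent to a well-complemented basic sequence $\Xi^*=(\bphi_n^*)_{n=1}^\infty$ consisting of simple functions escaping to infinity, with good projecting functionals $\Xi^{**}=(\bphi_n)_{n=1}^\infty$ that also consist of simple functions escaping to infinity. Congruence of $\Psi^*$ (restricted to the subsequence) with $\Xi^*$ means there is an automorphism of $\LL_{\rho^*}$ carrying one to the other; taking adjoints gives an automorphism of $\LL_\rho$ carrying the corresponding subsequence of $\Psi$ to $\Phi:=(\bphi_n)_{n=1}^\infty$, with $\Phi^*:=(\bphi_n^*)_{n=1}^\infty$ as projecting functionals. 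By construction $\Phi$ is pairwise disjointly supported, complemented, consists of simple functions escaping to infinity, and $\Phi^*$ is pairwise disjointly supported and consists of simple functions; so $\Phi$ is well-complemented with good projecting functionals $\Phi^*$, and $\Phi^*$ escapes to infinity as well. Passing to the subsequence throughout and invoking Proposition~\ref{rmk:congruence} to arrange that the automorphism simultaneously maps functionals to functionals completes the argument.

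The main thing to be careful about — the only real obstacle — is the bookkeeping in the duality swap: making sure that "projecting functionals of $\Psi$ in $\LL_\rho$" literally become "the basic sequence $\Psi^*$ in $\LL_{\rho^*}$ whose own projecting functionals recover $\Psi$," so that the roles of $\bpsi_n$ and $\bpsi_n^*$ in Theorem~\ref{thm:Cotype:Absolutely} are interchanged cleanly, and that "simple function escaping to infinity" is a self-dual notion preserved under this correspondence (it is, since it only refers to the pointwise values on the support). A secondary point is confirming $\rho^*$ absolutely continuous so that Theorem~\ref{thm:Cotype:Absolutely} delivers congruence rather than mere equivalence — this is where Theorem~\ref{thm:DualityTypeCotype} and the reflexivity of $\LL_\rho$ do the work. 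Everything else is a direct quotation of the already-established machinery.
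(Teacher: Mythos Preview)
Your proposal is correct and follows essentially the same approach as the paper: dualize, apply Theorem~\ref{thm:Cotype:Absolutely}\ref{it:Cotype:AbsolutelyB} to $\Psi^*$ in $\LL_{\rho^*}$ (using that $(\rho^*)^*=\rho$ is absolutely continuous to land in the favorable sub-case), invoke Proposition~\ref{rmk:congruence} to align the functionals, and dualize back. The paper's proof is terser and leaves the verifications of the hypotheses for $\rho^*$ (cotype $2$, $L_2(\mu)\subseteq\LL_{\rho^*}$, absolute continuity) implicit, but the logical skeleton is identical to yours.
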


\begin{proof}
If we identify of $(\LL_\rho)^*$ with $\LL_{\rho^*}$, $\Psi^*$ is a complemented unconditional basic sequence of $\LL_{\rho^*}$, and $\Psi$ is a sequence of projecting functionals for  $\Psi^*$.   Applying Theorem~\ref{thm:Cotype:Absolutely} and Proposition~\ref{rmk:congruence} gives,  passing to a subsequence, an isomorphism $S\colon \LL_{\rho^*}\to  \LL_{\rho^*}$ such that, if $T\colon \LL_\rho\to\LL_\rho$ is its dual isomorphism, the sequences $\Phi^*:=S(\Psi^*)$ and $\Phi=T(\Psi)$ satisfy the desired conditions.
\end{proof}

\begin{remark}
In some important situations the assumption that $\LL_\rho \subseteq L_2(\mu)$ in Theorem~\ref{thm:Type:Absolutely}  and Corollary~\ref{thm:Type}, as well as the assumption that $L_2(\mu)\subseteq\LL_\rho$ in Theorem~\ref{thm:Cotype:Absolutely}, are superfluous. In fact, if a rearrangement invariant function space $\LL_\rho$ over $[0,1]$ is lattice $2$-convex, then $\LL_\rho \subseteq L_2$, while if $\LL_\rho$ is  lattice $2$-concave, then $L_2\subseteq\LL_\rho$ (see \cite{LinTza1979}*{Remark 2 following Proposition 2.b.3}).
\end{remark}

\section{Unconditional basic sequences in direct sums of Lebesgue spaces}\label{sec:DirectSums}\noindent
Let $J$ be a finite set and, for each $j\in J$, let  $\rho_j$  be a  function norm  over a $\sigma$-finite measure space $(\Omega_j,\Sigma_j,\mu_j)$. Let $\mu:=\sqcup_{j\in J}\mu_j$ denote the disjoint union of the measures $\mu_j$, $j\in J$. There is a natural identification of $L_0(\mu)$ with $\oplus_{j\in J} L_0(\mu_j)$. So, we can regard  $\rho:=(\rho_j)_{j\in J}$ as a function norm over $\mu$, and we can canonically identify $\LL_\rho$ with $\oplus_{j\in J} \LL_{\rho_j}$. Since each summand $\LL_{\rho_j}$ canonically embeds in $\LL_\rho$, we will use the convention that $\LL_{\rho_j}$ is a subspace of $\LL_\rho$. Note that two sequences in $\LL_{\rho_j}$ are congruent when regarded in $\LL_\rho$ if and only if they are when regarded in $\LL_{\rho_j}$.

We also identify  $(\LL_\rho)^*$  with $\oplus_{j\in J} (\LL_{\rho_j})^*$ and, in the case when $\rho_j$ is absolutely continuous for all $j\in J$, with $\oplus_{j\in J} \LL_{\rho_j^*}$.

We start with a lemma that illustrates the reduction entailed in dealing with  well-complemented  basic sequences.

\begin{lemma}\label{lemRedOneComponent}
Let $J$ be a finite set and, for each $j\in J$, let $\rho_j$ be  a  function norm  over a $\sigma$-finite measure space $(\Omega_j,\Sigma_j,\mu_j)$. 
Any  well-complemented  basic sequence of $\oplus_{j\in J} \LL_{\rho_j}$ has a subsequence congruent to a   well-complemented  basic sequence of $\LL_{\rho_{j}}$ for some $j\in J$.
\end{lemma}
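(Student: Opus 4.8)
The plan is to split each $\bpsi_n$ along the direct sum, locate by a pigeonhole argument a single coordinate $j\in J$ that carries a non-degenerate piece of the biorthogonality relations along a subsequence, and then let Lemma~\ref{lem:NewOperators} do the work. Write $\LL_\rho=\oplus_{j\in J}\LL_{\rho_j}$ and $\LL_{\rho^*}=\oplus_{j\in J}\LL_{\rho_j^*}$ (note that $\rho$, hence each $\rho_j$, is absolutely continuous, as is implicit in the very notion of a well-complemented basic sequence). Let $\Psi=(\bpsi_n)_{n=1}^\infty$ be the given well-complemented basic sequence, with a sequence $\Psi^*=(\bpsi_n^*)_{n=1}^\infty$ of good projecting functionals, and for $n\in\NN$, $j\in J$ put $\bpsi_{n,j}=P_{\Omega_j}(\bpsi_n)=\bpsi_n\chi_{\Omega_j}$ and $\bpsi_{n,j}^*=\bpsi_n^*\chi_{\Omega_j}$. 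Since the pairing decomposes over $J$, biorthogonality gives $\sum_{j\in J}\langle\bpsi_{n,j}^*,\bpsi_{n,j}\rangle=\langle\bpsi_n^*,\bpsi_n\rangle=1$ for every $n$, so there is $j(n)\in J$ with $\abs{\langle\bpsi_{n,j(n)}^*,\bpsi_{n,j(n)}\rangle}\ge 1/\abs{J}$. Because $J$ is finite, some $j\in J$ occurs as $j(n)$ for infinitely many $n$; passing to that subsequence (still well-complemented, with the corresponding subsequence of good projecting functionals) and relabelling, I may assume that $j$ is fixed and that $\abs{\langle\bpsi_{n,j}^*,\bpsi_{n,j}\rangle}\ge 1/\abs{J}$ for all $n$.

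Now put $\Phi=(\bpsi_{n,j})_{n=1}^\infty$ and $\Phi^*=(\bpsi_{n,j}^*)_{n=1}^\infty$. These are pairwise disjointly supported in $\LL_\rho$ and $\LL_{\rho^*}$ respectively, they satisfy $\abs{\bpsi_{n,j}}\le\abs{\bpsi_n}$ and $\abs{\bpsi_{n,j}^*}\le\abs{\bpsi_n^*}$, and $\inf_n\abs{\langle\bpsi_{n,j}^*,\bpsi_{n,j}\rangle}\ge 1/\abs{J}>0$; in particular every $\bpsi_{n,j}$ is nonzero. Since the good projecting functionals $\Psi^*$ are disjointly supported, Lemma~\ref{lem:NewOperators}\ref{it:NewOperatorsB} and Lemma~\ref{lem:NewOperators}\ref{it:NewOperatorsC} apply and yield that $\Phi$ is a well-complemented basic sequence of $\LL_\rho$ congruent to $\Psi$, and that $(\lambda_n\bpsi_{n,j}^*)_{n=1}^\infty$ is a sequence of good projecting functionals for $\Phi$ for suitable scalars $\lambda_n$.

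It remains to transfer this conclusion from $\LL_\rho$ down to the component $\LL_{\rho_j}$. The sequence $\Phi$ lies in $\LL_{\rho_j}=\LL_\rho[\Omega_j]=P_{\Omega_j}(\LL_\rho)$, so restricting a bounded projection of $\LL_\rho$ onto $[\Phi]$ to $\LL_{\rho_j}$ yields a bounded projection of $\LL_{\rho_j}$ onto $[\Phi]$; thus $[\Phi]$ is complemented in $\LL_{\rho_j}$. Each $\lambda_n\bpsi_{n,j}^*$ is supported in $\Omega_j$, hence defines an element of $\LL_{\rho_j^*}=(\LL_{\rho_j})^*$; regarded this way, these functionals remain pairwise disjointly supported and biorthogonal to $\Phi$, and the uniform bound on the finite partial-sum operators $f\mapsto\sum_{n\in A}\lambda_n\langle\bpsi_{n,j}^*,f\rangle\,\bpsi_{n,j}$ holding on $\LL_\rho$ restricts to the same bound on $\LL_{\rho_j}$, since $\norm{\cdot}_\rho$ and $\norm{\cdot}_{\rho_j}$ agree there. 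Hence $\Phi$ is a well-complemented basic sequence of $\LL_{\rho_j}$, and it is congruent in $\LL_\rho$ to the chosen subsequence of $\Psi$, which is exactly what the statement asks for. I expect the main obstacle to be precisely this final bookkeeping — ensuring that complementation and the disjointness of the good projecting functionals persist upon passing to the coordinate space; everything else is a pigeonhole argument together with a direct appeal to Lemma~\ref{lem:NewOperators}.
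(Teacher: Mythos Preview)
Your proof is correct and follows essentially the same approach as the paper: decompose the biorthogonality relation $\sum_{j\in J}\langle\bpsi_{n,j}^*,\bpsi_{n,j}\rangle=1$ over the coordinates, use pigeonhole to fix a coordinate $j$ along an infinite subsequence with $\abs{\langle\bpsi_{n,j}^*,\bpsi_{n,j}\rangle}\ge 1/\abs{J}$, and then invoke Lemma~\ref{lem:NewOperators}\ref{it:NewOperatorsB} and~\ref{it:NewOperatorsC} to obtain well-complementedness and congruence. The paper's proof is terser and leaves the final transfer from $\LL_\rho$ down to $\LL_{\rho_j}$ implicit, whereas you spell it out; this extra bookkeeping is harmless and does not change the argument.
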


\begin{proof}
Let $(\bpsi_n^*)_{n=1}^\infty$ be good projecting functionals for $(\bpsi_n)_{n=1}^\infty$. If we write
\[
\bpsi_n=(\bpsi_{j,n})_{j\in J}, \quad \bpsi_n=(\bpsi^*_{j,n})_{j\in J}, \quad n\in\NN,
\]
then $\sum_{j\in J}  \langle \bpsi^*_{j,n} ,\bpsi_{j,n} \rangle=1$ for all $n\in\NN$. Hence, there is $j\in\NN$ such that the set
\[
\Nt:=\left\{ n\in \NN \colon \langle \bpsi^*_{j,n} ,\bpsi_{j,n} \rangle \ge \frac{1}{\abs{J}} \right\}
\]
is infinite. Let $(n_k)_{k=1}^\infty$ be an increasing enumeration of $\Nt$.  An application of  Lemma~\ref{lem:NewOperators}\ref{it:NewOperatorsB} gives that  $(\bpsi_{j,n_k})_{n=1}^\infty$ is  well-complemented. Since 
\[
\langle \bpsi^*_{n} ,\bpsi_{j,n} \rangle= \langle \bpsi^*_{j,n} ,\bpsi_{j,n} \rangle,
\]
$(\bpsi_{n_k})_{n=1}^\infty$ and  $(\bpsi_{j,n_k})_{n=1}^\infty$ are congruent by Lemma~\ref{lem:NewOperators}\ref{it:NewOperatorsC}. 
\end{proof}

\begin{theorem}\label{thm:DirecSum}
Let $J$ be a finite set and, for each $j\in J$, $\rho_j$  a  function norm  over a $\sigma$-finite measure space $(\Omega_j,\Sigma_j,\mu_j)$. Suppose that for each $j\in J$ either $\LL_{\rho_j}\subseteq L_2(\mu)$ and  $\LL_{\rho_j}$ has Rademacher type $2$ or $L_2(\mu) \subseteq \LL_{\rho_j}$ and  $\LL_{\rho_j}$ has Rademacher cotype $2$. Let $\Psi$ be a semi-normalized complemented unconditional basic sequence in $\oplus_{j\in J} \LL_{\rho_j}$. Then, either $\Psi$ is equivalent to the unit vector system of $\ell_2$, or there is $j\in J$ such that $\Psi$ has a subsequence equivalent to a  well-complemented  basic sequence, say $\Phi$, of $\LL_{\rho_{j}}$ consisting of simple functions  escaping to infinity.  Moreover,  there is a sequence $\Phi^*$ of good projecting functionals for $\Phi$ which consists of simple functions. If  $\rho_{j}$ is absolutely continuous for all $j\in J$, we can make $\Phi$ be congruent to $\Psi$, and make $\Phi^*$  escape to infinity.
\end{theorem}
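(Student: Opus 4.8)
Write $\rho=(\rho_j)_{j\in J}$, so that $\LL:=\oplus_{j\in J}\LL_{\rho_j}=\LL_\rho$. Since Rademacher type $2$ and Rademacher cotype $2$ both force nontrivial cotype, each $\LL_{\rho_j}$, hence $\LL_\rho$, is absolutely continuous by Theorem~\ref{thm:CotypeAC}, and we identify $(\LL_\rho)^*$ with $\LL_{\rho^*}=\oplus_{j\in J}\LL_{\rho_j^*}$; fixing norm-bounded projecting functionals $\Psi^*=(\bpsi_n^*)_{n=1}^\infty$ for $\Psi$, the sequence $\Psi^*$ is a semi-normalized complemented unconditional basic sequence in $\LL_{\rho^*}$ with projecting functionals $\Psi$. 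Split $J=J_t\sqcup J_c$, where $J_t$ is the set of $j$ with $\LL_{\rho_j}\subseteq L_2(\mu_j)$ and $\LL_{\rho_j}$ of type $2$, and put $\Omega_t=\sqcup_{j\in J_t}\Omega_j$, $\Omega_c=\sqcup_{j\in J_c}\Omega_j$; dualizing, $\LL_{\rho_j^*}$ has cotype $2$ with $L_2(\mu_j)\subseteq\LL_{\rho_j^*}$ for $j\in J_t$, and $\LL_{\rho_j^*}\subseteq L_2(\mu_j)$ for $j\in J_c$. Over $\Omega:=\sqcup_{j\in J}\Omega_j$ define the function norm $\rho_b$ to equal $\norm{\cdot}_{L_2(\mu_j)}$ on $\Omega_j$ for $j\in J_t$ and $\rho_j$ on $\Omega_j$ for $j\in J_c$; then $\LL_\rho\subseteq\LL_{\rho_b}$ (continuously) and $\LL_{\rho_b}$, a finite lattice direct sum of cotype-$2$ lattices, has cotype $2$. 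Dually, let $\rho_b^\sharp$ equal $\norm{\cdot}_{L_2(\mu_j)}$ on $\Omega_j$ for $j\in J_c$ and $\rho_j^*$ on $\Omega_j$ for $j\in J_t$, so that $\LL_{\rho^*}\subseteq\LL_{\rho_b^\sharp}$ and $\LL_{\rho_b^\sharp}$ has cotype $2$. By a routine reduction we may also assume each $\mu_j$ is finite, so that Lemma~\ref{lem:Embedding}\ref{it:EmbeddingC} and Theorems~\ref{thm:Cotype:Absolutely} and~\ref{thm:Type} will apply.

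\textbf{The dichotomy.} After passing to a subsequence, each of the bounded sequences $\norm{\bpsi_n}_{\rho_b}$ and $\norm{\bpsi_n^*}_{\rho_b^\sharp}$ is either bounded away from $0$ or tends to $0$. If both are bounded away from $0$, then Lemma~\ref{lem:Embedding}\ref{it:EmbeddingB} applied to $\Psi$ in $\LL_\rho$ with the auxiliary norm $\rho_b$ shows that $\Psi$ dominates the unit vector system of $\ell_2$, while the same lemma applied to $\Psi^*$ in $\LL_{\rho^*}$ with $\rho_b^\sharp$ shows that $\Psi^*$ does; by the reflexivity principle for basic sequences (\cite{AlbiacKalton2016}*{Corollary 3.2.4}) the unit vector system of $\ell_2$ then dominates $\Psi$, so $\Psi$ is equivalent to that system. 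Otherwise $\norm{\bpsi_n}_{\rho_b}\to 0$ or $\norm{\bpsi_n^*}_{\rho_b^\sharp}\to 0$, and it remains to treat these two cases.

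\textbf{Reduction to a homogeneous summand.} Suppose first $\norm{\bpsi_n}_{\rho_b}\to 0$. By the definition of $\rho_b$ this forces $\norm{\bpsi_n\chi_{\Omega_c}}_\rho\to 0$ and $\norm{\bpsi_n\chi_{\Omega_t}}_{L_2(\Omega_t)}\to 0$; since $\Psi^*$ is norm-bounded, after a further subsequence $\sum_n\norm{\bpsi_n\chi_{\Omega_c}}_\rho\norm{\bpsi_n^*}_{\rho^*}<1$, and Lemma~\ref{lem:SP} lets us replace $\Psi$ by the congruent, still complemented unconditional, sequence $(\bpsi_n\chi_{\Omega_t})_{n=1}^\infty$. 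Thus $\Psi$ may be assumed supported on $\Omega_t$, i.e.\ to be a semi-normalized complemented unconditional basic sequence in $\oplus_{j\in J_t}\LL_{\rho_j}$ — a finite direct sum of type-$2$ lattices contained in $L_2$ — with $\inf_n\norm{\bpsi_n}_{2}=0$; Corollary~\ref{thm:Type} then produces a subsequence of $\Psi$ congruent to a well-complemented basic sequence $\Phi$ of $\oplus_{j\in J_t}\LL_{\rho_j}$ such that $\Phi$ and a sequence $\Phi^*$ of good projecting functionals for $\Phi$ consist of simple functions escaping to infinity, and Lemma~\ref{lemRedOneComponent} replaces $\Phi$ by a further subsequence congruent to a well-complemented basic sequence of a single $\LL_{\rho_j}$, $j\in J_t$, still consisting of simple functions escaping to infinity and with good projecting functionals that are simple and, being componentwise restrictions of $\Phi^*$, also escape to infinity. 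The case $\norm{\bpsi_n^*}_{\rho_b^\sharp}\to 0$ is symmetric: now $\norm{\bpsi_n^*\chi_{\Omega_t}}_{\rho^*}\to 0$, so by the dual perturbation Lemma~\ref{lem:SPD} we may assume $\supp(\bpsi_n^*)\subseteq\Omega_c$; then the copy $(\bpsi_n\chi_{\Omega_c})_{n=1}^\infty$ of $\Psi$, which is congruent to $\Psi$ by Lemma~\ref{lem:EquivCong} (the identity $P=P\circ P_{\Omega_c}$ forces the two projections to share a kernel), is a semi-normalized complemented unconditional basic sequence in $\oplus_{j\in J_c}\LL_{\rho_j}$ — a finite direct sum of cotype-$2$ lattices containing $L_2$ — with projecting functionals $\Psi^*$ satisfying $\inf_n\norm{\bpsi_n^*}_{2}=0$, and Theorem~\ref{thm:Cotype:Absolutely} followed by Lemma~\ref{lemRedOneComponent} finishes exactly as before; the refinement, under absolute continuity of the $\rho_j^*$, inherits from the corresponding refinement of Theorem~\ref{thm:Cotype:Absolutely}.

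\textbf{Main obstacle.} The delicate step is the reduction just described. In the genuinely mixed situation $\LL_\rho$ has, in general, neither nontrivial type nor nontrivial cotype, and the hypothesis class is not self-dual (cotype $2$ does not dualize back to type $2$), so neither $\LL_\rho$ nor $\LL_{\rho^*}$ is directly amenable to the single-space theorems; one must read off the $\ell_2$-content of $\Psi$ through $\rho_b$ and that of $\Psi^*$ through $\rho_b^\sharp$, and then arrange, by two perturbation steps that have to respect the direct-sum decomposition, that a subsequence actually lives up to congruence in a \emph{homogeneous} subsum $\oplus_{j\in J_t}\LL_{\rho_j}$ or $\oplus_{j\in J_c}\LL_{\rho_j}$. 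Only there do Corollary~\ref{thm:Type} and Theorem~\ref{thm:Cotype:Absolutely} apply, and Lemma~\ref{lemRedOneComponent} performs the final localization to one component; making this, together with the reduction to finite measures, go through cleanly is the bulk of the work.
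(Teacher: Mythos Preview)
Your proof is correct and follows essentially the same route as the paper's: the auxiliary cotype-$2$ norms you call $\rho_b$ and $\rho_b^\sharp$ are exactly the paper's $\rho_b$ and $\rho_d$, the three-way split (both infima positive versus one of them vanishing) is the same, the perturbation Lemmas~\ref{lem:SP} and~\ref{lem:SPD} are used in the same places to push a subsequence into the homogeneous subsum, and the final localization to a single $\LL_{\rho_j}$ is done via Lemma~\ref{lemRedOneComponent} in both arguments. The only structural difference is that the paper first treats the two-summand case $J=\{1,2\}$ and then reduces the general case to it, whereas you work directly with the partition $J=J_t\sqcup J_c$; your version is a mild streamlining but not a different idea. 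One small slip in your commentary: $\LL_\rho$ always has nontrivial cotype here (every summand does), so your remark in the ``Main obstacle'' paragraph that it has neither nontrivial type nor cotype should read that it need not have type $2$ nor cotype $2$.
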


\begin{proof}   
Set $\Psi=(\bpsi_n)_{n=1}^\infty$ and let $\Psi^*=(\bpsi_n^*)_{n=1}^\infty$ be coordinate functionals for $\Psi$. We first address a particular case.

\noindent\textbf{Case A}. Suppose that  $J=\{1,2\}$, $\LL_{\rho_2}\subseteq L_2(\mu)\subseteq\LL_{\rho_1}$, $\LL_{\rho_1}$ has Rademacher cotype $2$, and $\LL_{\rho_2}$  has Rademacher type $2$. Let $\rho_b$ and $\rho_d$  be the functions norms over $\mu_1\sqcup \mu_2$ given by $\rho_b=(\rho_1, \norm{\cdot}_{L_2(\mu_2)})$ and $\rho_d=(\norm{\cdot}_{L_2(\mu_1)}, \rho_2^*)$.  Note that $\LL_\rho\subseteq \LL_{\rho_b}\cap \LL_{\rho_d}$ and that  $\LL_{\rho_b}$ and $\LL_{\rho_d}$  have cotype $2$.  We consider three possible subcases.

\noindent\textbf{Case A.1}. Suppose  that $\inf_n \norm{\bpsi_n}_{\rho_b}>0$ and $\inf_n \norm{\bpsi_n^*}_{\rho_d}>0$. Then, by Lemma~\ref{lem:Embedding}\ref{it:EmbeddingB}, $\Psi$ and $\Psi^*$ dominate the unit vector system of $\ell_2$. Hence, $\Psi$ is equivalent to the unit vector system of $\ell_2$.

\noindent\textbf{Case A.2}. Suppose that $\inf_n \norm{\bpsi_n}_{\rho_b}=0$. We infer from Lemma~\ref{lem:SP} that a subsequence of $\Psi$ is congruent to a sequence $(\bpsi_{2,n})_{n=1}^\infty$  in $\LL_{\rho_2}$ that satisfies $\lim_n  \norm{\bpsi_{2,n}}_2=0$. Then, the result follows from Corollary~\ref{thm:Type}.

\noindent\textbf{Case A.3}. Suppose that $\inf_n \norm{\bpsi_n}_{\rho_d}=0$. By Lemma~\ref{lem:SPD}, a subsequence of $\Psi$ is congruent to a sequence $(\bpsi_{1,n},\bpsi_{2,n})_{n=1}^\infty$ with projecting functionals  $(\bpsi_{1,n}^*)_{n=1}^\infty$  belonging to  $\LL_{\rho_1^*}$ and satisfying $\lim_n  \norm{\bpsi_{1,n}^*}_2=0$. Since the mapping
\[
(f,g)\mapsto P(f,g):= \sum_{n=1}^\infty \bpsi_{1,n}^*(f) (\bpsi_{1,n},\bpsi_{2,n})
\]
is an endomorphism of  $\LL_{\rho_1}\oplus \LL_{\rho_2}$, also is the mapping
\[
(f,g)\mapsto Q(f,g):= \sum_{n=1}^\infty \bpsi_{1,n}^*(f) \bpsi_{1,n}.
\]
The mappings $P$ and $Q$ witness that  $(\bpsi_{1,n},\bpsi_{2,n})_{n=1}^\infty$ and   $(\bpsi_{1,n})_{n=1}^\infty$ are equivalent. In turn, the mapping $Q$
witnesses that  $(\bpsi_{1,n})_{n=1}^\infty$, regarded as a sequence in $\LL_{\rho_1}\oplus \LL_{\rho_2}$, is a complemented unconditional basic sequence with  projecting functionals  $(\bpsi_{1,n}^*)_{n=1}^\infty$. By Corollary~\ref{prop:SameFunctionals},  $(\bpsi_{1,n},\bpsi_{2,n})_{n=1}^\infty$ and   $(\bpsi_{1,n})_{n=1}^\infty$ are congruent. We conclude by applying Theorem~\ref{thm:Cotype:Absolutely}.

To address the proof in the general case we consider the partition $(J_1,J_2)$ of $J$ defined by $j\in J_1$ if $L_2(\mu) \subseteq \LL_{\rho_j}$ and  $\LL_{\rho_j}$ has Rademacher cotype $2$, and $j\in J_2$ if $\LL_{\rho_j}\subseteq L_2(\mu)$ and  $\LL_{\rho_j}$ has Rademacher type $2$. If $J_2=\emptyset$, we apply Theorem~\ref{thm:Cotype:Absolutely}. If $J_1=\emptyset$, we apply Corollary~\ref{thm:Type}. If $J_1\not=\emptyset$ and 
 $J_2\not=\emptyset$, we apply the already proved Case A. In any case, we conclude that, unless $\Psi$ is equivalent to the unit vector system of $\ell_2$, 
there is $i\in\{1,2\}$ such that a subsequence of $\Psi$ is equivalent to a  well-complemented  basic sequence $\Phi$ of $\oplus_{j\in J_i} \LL_{\rho_j}$  which escapes to infinity. Besides, both $\Phi$ and a sequence $\Phi^*$  of good projecting functionals for $\Phi$ consist of simple functions; and, if $i=2$ or $\rho_j^*$ is absolutely continuous for all $j\in J_1$, $\Phi^*$ escapes to infinity and we obtain congruence. Since, by Lemma~\ref{lemRedOneComponent}, $\Phi$ is congruent to a  well-complemented   basic sequence in  $\LL_{\rho_j}$ for some $j\in J_i$, we are done.
\end{proof}

Theorem~\ref{thm:DirecSum} has an immediate consequence on the study of subsymmetric basic sequences.

\begin{theorem}\label{thm:DirecSumSS}
Let $J$ be a finite set and, for each $j\in J$, $\rho_j$  be a  function norm  over a $\sigma$-finite measure space $(\Omega_j,\Sigma_j,\mu_j)$. Suppose that for each $j\in J$ either $\LL_{\rho_j}\subseteq L_2(\mu)$ and  $\LL_{\rho_j}$ has Rademacher type $2$ or $L_2(\mu) \subseteq \LL_{\rho_j}$ and  $\LL_{\rho_j}$ has Rademacher cotype $2$.  Let $\Psi$ be a complemented subsymmetric  basic sequence in $\oplus_{j\in J} \LL_{\rho_j}$. Then, either $\Psi$ is equivalent to the unit vector system of $\ell_2$, or there is $j\in J$ such that $\Psi$ is equivalent to a  well-complemented  basic sequence, say $\Phi$, of $\LL_{\rho_{j}}$ consisting of simple functions  escaping to infinity.  Moreover,  there is a sequence $\Phi^*$ of good projecting functionals for $\Phi$ which consists of simple functions. If  $\rho_{j}$ is absolutely continuous for all $j\in J$, we can make  $\Phi^*$  escape to infinity.
\end{theorem}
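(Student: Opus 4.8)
The plan is to read off Theorem~\ref{thm:DirecSumSS} directly from Theorem~\ref{thm:DirecSum}, the only extra ingredient being the defining feature of subsymmetric basic sequences: being spreading, $\Psi$ is equivalent to every one of its subsequences.

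First I would check that $\Psi$ fulfils the hypotheses of Theorem~\ref{thm:DirecSum}. By definition a subsymmetric basic sequence is unconditional, and it is complemented by assumption, so only semi-normalization needs a comment. Every spreading basic sequence is semi-normalized: if $\inf_n\norm{\bpsi_n}=0$ (the case $\sup_n\norm{\bpsi_n}=\infty$ being analogous), pick a very sparse subsequence whose norms decay faster than any geometric rate; since $\Psi$ is equivalent to that subsequence and also to the shift $(\bpsi_{n+1})_{n=1}^\infty$ — the latter with a \emph{fixed} equivalence constant, which forces a geometric lower bound on $\norm{\bpsi_n}$ — one gets a contradiction. Hence Theorem~\ref{thm:DirecSum} applies to $\Psi$.

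Then I would invoke Theorem~\ref{thm:DirecSum}. Either $\Psi$ is equivalent to the unit vector system of $\ell_2$, in which case there is nothing left to prove, or there exist $j\in J$ and an increasing map $\pi$ of $\NN$ such that the subsequence $\Psi':=(\bpsi_{\pi(n)})_{n=1}^\infty$ is equivalent to a well-complemented basic sequence $\Phi=(\bphi_n)_{n=1}^\infty$ of $\LL_{\rho_j}$ consisting of simple functions escaping to infinity, $\Phi$ admits a sequence $\Phi^*$ of good projecting functionals consisting of simple functions, and, when $\rho_j$ is absolutely continuous for every $j\in J$, $\Phi^*$ escapes to infinity. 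Since $\Psi$ is spreading, $\Psi$ is equivalent to $\Psi'$, hence to $\Phi$; as every remaining assertion concerns $\Phi$ and $\Phi^*$ alone, it carries over verbatim, and the proof is complete.

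I do not expect any genuine obstacle here; this is a formal consequence of the preceding theorem. The single step worth a line of justification is the upgrade from ``a subsequence of $\Psi$ is equivalent to $\Phi$'' to ``$\Psi$ is equivalent to $\Phi$'', which is precisely the spreading hypothesis. I would also point out explicitly — in contrast with Theorem~\ref{thm:DirecSum} — that no congruence between $\Psi$ and $\Phi$ is claimed, since a subbasis of a subsymmetric basic sequence need not be congruent to the whole sequence (cf. the discussion preceding Corollary~\ref{cor:Type:Absolutely}), so the congruence available for $\Psi'$ does not propagate to $\Psi$.
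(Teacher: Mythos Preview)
Your proposal is correct and matches the paper's approach: the paper presents Theorem~\ref{thm:DirecSumSS} as an immediate consequence of Theorem~\ref{thm:DirecSum} without a written proof, and your argument---apply Theorem~\ref{thm:DirecSum} and then use spreadability to pass from a subsequence to the whole sequence---is exactly the intended deduction. Your remarks on semi-normalization and on why congruence is dropped are accurate elaborations of points the paper leaves implicit (the latter echoing the discussion before Corollary~\ref{cor:Type:Absolutely}).
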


In some situations, we can classify the subsymmetric basic sequences of a direct sum of function spaces without assuming complementability.

\begin{theorem}\label{thm:DirecSumType}
Let $N\in \NN$  and for each $j\in\{1,\dots, N\}$, $\rho_j$   be a function norm  over a $\sigma$-finite measure space $(\Omega_j,\Sigma_j,\mu_j)$. Suppose that there is an increasing $N-1$-tuple $(p_j)_{j=1}^{N-1}$ in $(2,\infty)$ such that 
\begin{itemize}[leftmargin=*]
\item $\LL_{\rho_j}\subseteq L_2(\mu)$ and  $\LL_{\rho_j}$ has Rademacher type $2$  for all $j=1$, \dots, $N$,
\item $\LL_{\rho_1}$ is $2$-convex,
\item $\LL_{\rho_N}$ has nontrival concavity, and 
\item $\LL_{\rho_j}$ satisfies a lower $p_j$-estimate and $\LL_{\rho_{j+1}}$ satisfies an upper $p_j$-estimate for all $j=1$, \dots $N-1$.
\end{itemize}
Let $\Psi$ be a semi-normalized unconditional basic sequence in $\oplus_{j=1}^N \LL_{\rho_j}$. Then, either $\Psi$ is equivalent to the unit vector system of $\ell_2$, or there is $j=1$, \dots, $N$ such that $\Psi$ has a subsequence equivalent to a disjointly supported basic sequence  of $\LL_{\rho_{j}}$ consisting of simple functions  escaping to infinity.
\end{theorem}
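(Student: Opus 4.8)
View $\oplus_{j=1}^{N}\LL_{\rho_j}$ as the function space $\LL_\rho$ associated with $\rho=(\rho_j)_{j=1}^{N}$ over the disjoint union $\mu=\sqcup_{j=1}^{N}\mu_j$, and write $\bpsi_n=(\bpsi_{1,n},\dots,\bpsi_{N,n})$. Since $\LL_\rho\subseteq L_2(\mu)$ and $\LL_\rho$, being a finite direct sum of spaces of Rademacher type $2$, has Rademacher type $2$, it has nontrivial concavity by Theorem~\ref{thm:RadConvBis}, hence nontrivial cotype, and so $\rho$ is absolutely continuous by Theorem~\ref{thm:CotypeAC}. First I would run the dichotomy of Theorem~\ref{thm:Type:Absolutely} for $\LL_\rho$ (reducing to the case of a finite measure in the usual way): if $\inf_n\norm{\bpsi_n}_2>0$ then $\Psi$ is equivalent to the unit vector system of $\ell_2$ and we are done; this half needs only parts~\ref{it:EmbeddingA} and~\ref{it:EmbeddingB} of Lemma~\ref{lem:Embedding} and no finiteness of $\mu$. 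Otherwise, after passing to a subsequence and perturbing exactly as in the proof of Theorem~\ref{thm:Type:Absolutely}, we may assume $\Psi$ is itself a pairwise disjointly supported sequence $\Phi=(\bphi_k)_{k=1}^{\infty}$ of simple functions escaping to infinity in $\LL_\rho$; then each $(\bphi_{m,k})_{k=1}^{\infty}$ is a disjointly supported sequence of simple functions in $\LL_{\rho_m}$ and $\max_{1\le m\le N}\norm{\bphi_{m,k}}_{\rho_m}\approx 1$.

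The heart of the argument is to collapse $\Phi$ onto a single summand. Passing to a further subsequence, assume $\lim_k\norm{\bphi_{m,k}}_{\rho_m}=\gamma_m$ exists for every $m$, and put $m_0=\min\{m:\gamma_m>0\}$. Extend the coordinate functionals of the basic sequence $\Phi$ to a norm-bounded biorthogonal system $(\bphi_k^*)_{k=1}^{\infty}$ in $(\LL_\rho)^*$ by Hahn--Banach. Since $\norm{\bphi_k-\tilde\bphi_k}_\rho\approx\max_{m<m_0}\norm{\bphi_{m,k}}_{\rho_m}\to 0$, where $\tilde\bphi_k$ is obtained from $\bphi_k$ by deleting the components of index $<m_0$, a further subsequence satisfies the hypothesis of Lemma~\ref{lem:SP}, so $\Phi$ is congruent to $\tilde\Phi$. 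Hence we may assume $\bphi_{m,k}=0$ for $m<m_0$, and, after one more subsequence, $\norm{\bphi_{m_0,k}}_{\rho_{m_0}}\ge\gamma_{m_0}/2$ for all $k$. If $m_0=N$, then $\Phi=((0,\dots,0,\bphi_{N,k}))_k$ is congruent to the disjointly supported sequence $(\bphi_{N,k})_k$ of simple functions escaping to infinity in $\LL_{\rho_N}$, and we are done.

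Finally, suppose $m_0\le N-1$. For $(a_k)\in c_{00}$, applying the lower $p_{m_0}$-estimate of $\LL_{\rho_{m_0}}$ to the disjointly supported family $(a_k\bphi_{m_0,k})_k$ gives $\norm{\sum_k a_k\bphi_{m_0,k}}_{\rho_{m_0}}\gtrsim\big(\sum_k\abs{a_k}^{p_{m_0}}\big)^{1/p_{m_0}}$, while for $m_0<m\le N$ (so $m\ge 2$) the upper $p_{m-1}$-estimate of $\LL_{\rho_m}$ and the boundedness of $\norm{\bphi_{m,k}}_{\rho_m}$ give $\norm{\sum_k a_k\bphi_{m,k}}_{\rho_m}\lesssim\big(\sum_k\abs{a_k}^{p_{m-1}}\big)^{1/p_{m-1}}\le\big(\sum_k\abs{a_k}^{p_{m_0}}\big)^{1/p_{m_0}}$, the last step because $(p_j)$ is increasing and $m-1\ge m_0$. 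Since $\norm{\sum_k a_k\bphi_k}_\rho\approx\max_{m_0\le m\le N}\norm{\sum_k a_k\bphi_{m,k}}_{\rho_m}$, we obtain $\norm{\sum_k a_k\bphi_k}_\rho\approx\norm{\sum_k a_k\bphi_{m_0,k}}_{\rho_{m_0}}$, i.e.\ $\Phi$ is equivalent to $(\bphi_{m_0,k})_k$, a disjointly supported sequence of simple functions escaping to infinity in $\LL_{\rho_{m_0}}$; tracing back the congruences, a subsequence of the original $\Psi$ is equivalent to it. I expect the main obstacle to be the bookkeeping of the middle step — checking that deleting the sub-$m_0$ components preserves equivalence and that the perturbations keep $\Phi$ disjoint, simple and escaping to infinity — together with the (standard but slightly delicate) reduction to a finite measure space needed to invoke the disjointification of Theorem~\ref{thm:Type:Absolutely}; the conceptual point is simply that the increasing tuple $(p_j)$ forces the lowest surviving summand to dominate all higher ones.
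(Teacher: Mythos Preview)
Your proposal is correct and follows essentially the same approach as the paper: apply Theorem~\ref{thm:Type:Absolutely} to obtain the dichotomy, then in the disjoint case use the small perturbation principle (Lemma~\ref{lem:SP}) to delete the asymptotically null components and use the ladder of lower/upper $p_j$-estimates to show the minimal surviving component dominates all higher ones. The only cosmetic difference is that the paper deletes \emph{every} component $j$ with $\inf_n\norm{\bphi_{j,n}}_{\rho_j}=0$ (obtaining a set $A$ of survivors and taking $k=\min A$), whereas you only delete the components below $m_0$ and let the upper-estimate argument absorb any higher null components; both work, and your observation that the $\sigma$-finite-to-finite reduction is needed for Theorem~\ref{thm:Type:Absolutely} is a point the paper glosses over.
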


\begin{proof}
The function norm $\rho=(\rho_j)_{j=1}^N$ is $2$ convex and has nontrivial concavity. So, $\LL_\rho$ has Rademacher type $2$. By Theorem~\ref{thm:Type:Absolutely}, unless $\Psi$ is equivalent to the unit vector system of $\ell_2$, it has a subsequence congruent to a disjointly supported sequence, say $\Phi=(\bphi_n)_{n=1}^\infty$, consisting of simple functions escaping to infinity. Write
\[
\bphi_n=(\bphi_{j,n})_{j=1}^N, \, n\in \NN, \quad \mbox{ and }\quad    \Phi_{(j)}:=(\bphi_{j,n})_{n=1}^\infty, \, j=1, \dots, N.
\]
If $j\in\{1,\dots, N\}$ is such that $\inf_n \norm{\bphi_{j,n}}_{\rho_j}=0$, then, by Lemma~\ref{lem:SP}, passing to a further subsequence we can assume that 
$\norm{\bphi_{j,n}}_{\rho_j}=0$ for all $n\in\NN$. Consequently, we can assume that there is $A\subseteq\{1,\dots, N\}$ such that 
\[
\inf_{j\in A} \inf_{n\in\NN} \norm{\bphi_{j,n}}_{\rho_j}>0.
\]
Of course, $A$ is nonempty. If $A$ is a singleton, we are done. Otherwise, set $k=\min A$ and $p=p_{k+1}$. The lower estimate for $\LL_{\rho_k}$ gives that  $\Phi_{(k)}$ dominates the unit vector system of $\ell_{p}$. In turn,  the upper estimate for $\LL_{\rho_j}$, $j\in A\setminus\{k\}$, gives that  the unit vector system of $\ell_p$  dominates  $\Phi_{(j)}$  for all  $j\in A\setminus\{k\}$. Consequently, $\Phi$ is equivalent to  $\Phi_{(k)}$.
\end{proof}

We next apply our results to Lebesgue spaces. If $\mu$ is the Lebesgue measure over $[0,1]$, we set $L_p=L_p(\mu)$. 

\begin{theorem}\label{thm:DSLp}
Let $P\subseteq[1,\infty)$ be finite. Let $\Psi$ be a semi-normalized complemented subsymmetric basic sequence in $\oplus_{p\in P} L_p$. Then, $\Psi$ is equivalent to the unit vector system of $\ell_p$ for some $p\in P\cup\{2\}$.
\end{theorem}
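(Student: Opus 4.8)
The plan is to deduce the statement from Theorem~\ref{thm:DirecSumSS}, whose hypotheses are met by the family $(\norm{\cdot}_p)_{p\in P}$. Indeed, since the Lebesgue measure on $[0,1]$ is finite, for $p\in P$ with $p\ge 2$ we have $L_p\subseteq L_2$ and $L_p$ has Rademacher type $2$, while for $p\in P$ with $1\le p\le 2$ we have $L_2\subseteq L_p$ and $L_p$ has Rademacher cotype $2$ (when $p=2$ either alternative serves); these are classical facts about $L_p$-spaces over a probability space. Each $\norm{\cdot}_p$, $p\in P$, is a function norm, so $\oplus_{p\in P}L_p=\LL_\rho$ with $\rho=(\norm{\cdot}_p)_{p\in P}$, and Theorem~\ref{thm:DirecSumSS} applies.

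Applying that theorem to the complemented subsymmetric basic sequence $\Psi$ yields two alternatives. If $\Psi$ is equivalent to the unit vector system of $\ell_2$, we are done, because $2\in P\cup\{2\}$. Otherwise there is $p\in P$ such that $\Psi$ is equivalent to a well-complemented basic sequence $\Phi=(\bphi_n)_{n=1}^\infty$ of $L_p$. In particular $\Phi$ is pairwise disjointly supported; and, being the image of the semi-normalized sequence $\Psi$ under an isomorphism onto its closed span, $\Phi$ is semi-normalized in $\oplus_{p\in P}L_p$, hence in the summand $L_p$.

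To finish I would invoke the elementary identity that for a pairwise disjointly supported sequence in $L_p$,
\[
\norm{\sum_{n=1}^\infty a_n\,\bphi_n}_p=\left(\sum_{n=1}^\infty \abs{a_n}^p\,\norm{\bphi_n}_p^p\right)^{1/p},\qquad (a_n)_{n=1}^\infty\in c_{00},
\]
and, since $\Phi$ is semi-normalized, the right-hand side is comparable to $\left(\sum_n\abs{a_n}^p\right)^{1/p}$. Hence $\Phi$, and therefore $\Psi$, is equivalent to the unit vector system of $\ell_p$, which gives the result. I do not anticipate a genuine difficulty: the whole weight of the argument rests on Theorem~\ref{thm:DirecSumSS}, and the only extra ingredient is the standard fact that a semi-normalized disjointly supported sequence in $L_p$ spans an isomorphic copy of $\ell_p$. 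The one point worth keeping in mind is that being \emph{well-complemented} already forces disjoint support, so the additional information in the conclusion of Theorem~\ref{thm:DirecSumSS} (simple functions escaping to infinity, the behaviour of the good projecting functionals), which is essential in the Orlicz setting, plays no role here.
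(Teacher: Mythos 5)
Your proposal is correct and follows essentially the same route as the paper: apply Theorem~\ref{thm:DirecSumSS} (using type $2$ of $L_p$ for $p\ge 2$ and cotype $2$ for $p\le 2$, with the inclusions into/from $L_2$ coming from the finiteness of the measure), and then use the standard fact that a semi-normalized disjointly supported sequence in $L_p$ is equivalent to the unit vector system of $\ell_p$. The paper's own proof is exactly this two-line argument, so there is nothing to add.
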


\begin{proof}
Since $L_p$ has type $2$ if $2\le p<\infty$ and cotype $2$ if $1\le p \le 2$, we can apply Theorem~\ref{thm:DirecSumSS}. Since any  semi-normalized disjointly supported sequence of $L_p$ is equivalent to the unit vector system of $\ell_p$, we are done.
\end{proof}

Notice that Theorem~\ref{thm:DSLp} gives that, given $P$, $Q\subseteq[1,\infty)$ finite,  $\oplus_{p\in P} L_p$ and  $\oplus_{p\in Q} L_p$ are not isomorphic unless $P\cup\{2\}= Q\cup\{2\}$. Since $L_2$ is a complemented subspace of $L_p$ if and only if $1<p<\infty$, the converse holds with the following exception: $L_1$ and $L_1\oplus L_2$ are not isomorphic.

\begin{theorem}
Let $P\subseteq[2,\infty)$ be finite. Let $\Psi$ be a semi-normalized  subsymmetric basic sequence in $\oplus_{p\in P} L_p$. Then, $\Psi$ is equivalent to the unit vector system of $\ell_p$ for some $p\in P\cup\{2\}$.
\end{theorem}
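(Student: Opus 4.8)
The plan is to reduce the statement to Theorem~\ref{thm:DirecSumType} applied to the Lebesgue spaces $L_q$, $q\in P$, over $[0,1]$. Write $P=\{q_1<q_2<\cdots<q_N\}$, where $N=\abs{P}$, and let $\rho_j$ be the function norm for which $\LL_{\rho_j}=L_{q_j}$. First I would record the structural properties of $L_q$ for $q\in[2,\infty)$ that are needed: since $[0,1]$ has finite measure and $q\ge 2$, one has $L_q\subseteq L_2$; the space $L_q$ has Rademacher type $2$; it is lattice $2$-convex (being lattice $r$-convex for every $1\le r\le q$); and it is lattice $q$-concave, so it has nontrivial concavity. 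Finally, evaluating the norm of a sum of disjointly supported functions in $L_q$ and using the inclusion $\ell_s\hookrightarrow\ell_r$ for $s\le r$, one sees that $L_q$ satisfies a lower $r$-estimate whenever $r\ge q$ and an upper $r$-estimate whenever $r\le q$.

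Next I would exhibit the interlacing exponents required by Theorem~\ref{thm:DirecSumType}. When $N\ge 2$, put $p_j:=q_{j+1}$ for $j=1,\dots,N-1$. Since $q_1\ge 2$ and the $q_j$ are strictly increasing, $q_{j+1}\ge q_2>q_1\ge 2$, so $(p_j)_{j=1}^{N-1}$ is a strictly increasing tuple in $(2,\infty)$. Moreover $L_{q_j}$ satisfies a lower $p_j$-estimate because $p_j=q_{j+1}>q_j$, and $L_{q_{j+1}}$ satisfies an upper $p_j$-estimate because $p_j=q_{j+1}$; also $\LL_{\rho_1}=L_{q_1}$ is $2$-convex, $\LL_{\rho_N}=L_{q_N}$ has nontrivial concavity, and every $L_{q_j}$ has type $2$ and sits inside $L_2$. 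Thus all hypotheses of Theorem~\ref{thm:DirecSumType} hold; when $N=1$ the tuple is empty and the remaining hypotheses hold trivially for $L_{q_1}$. Applying Theorem~\ref{thm:DirecSumType} to $\Psi$ (a subsymmetric basic sequence is in particular a semi-normalized unconditional basic sequence) gives that either $\Psi$ is equivalent to the unit vector system of $\ell_2$, or there exist $j\in\{1,\dots,N\}$ and a subsequence of $\Psi$ equivalent to a semi-normalized disjointly supported basic sequence of $L_{q_j}$ consisting of simple functions escaping to infinity.

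To finish, I would invoke subsymmetry: a subsymmetric basic sequence is equivalent to each of its subsequences, so in the second alternative $\Psi$ itself is equivalent to a semi-normalized disjointly supported sequence in $L_{q_j}$; such a sequence is equivalent to the unit vector system of $\ell_{q_j}$, exactly as in the proof of Theorem~\ref{thm:DSLp}. Hence $\Psi$ is equivalent to the unit vector system of $\ell_p$ for some $p\in P\cup\{2\}$, as claimed. The only delicate point is the bookkeeping in the middle paragraph: one must produce the increasing tuple $(p_j)$ inside $(2,\infty)$ and match it with the correct upper and lower estimates of consecutive spaces $L_{q_j}$, the only edge case being $q_1=2$, which is harmless since then automatically $q_2>2$. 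Everything else is a direct appeal to the machinery already developed.
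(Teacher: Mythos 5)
Your proposal is correct and follows the same route as the paper: the paper's proof is exactly an appeal to Theorem~\ref{thm:DirecSumType}, using that $L_p$ is lattice $p$-convex and lattice $p$-concave (hence the required inclusions in $L_2$, type $2$, $2$-convexity, nontrivial concavity, and the interlacing upper/lower estimates), and then the standard facts that disjointly supported semi-normalized sequences in $L_p$ are equivalent to the unit vector system of $\ell_p$ and that a subsymmetric sequence is equivalent to its subsequences. You have merely spelled out the bookkeeping (the choice $p_j=q_{j+1}$ and the edge cases) that the paper leaves implicit.
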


\begin{proof}
Taking into account that $L_p$ is lattice $p$-convex and lattice $p$-concave, the result is a ready consequence of Theorem~\ref{thm:DirecSumType}.
\end{proof}

\section{Orlicz function spaces}\label{sec:Orlicz}\noindent
A (convex) \emph{Orlicz function}  is a convex non-decreasing function 
\[
F\colon[0,\infty)\to[0,\infty]
\] with $F(0)=0$ and $F(c)<\infty$ for some $c>0$.  If T takes the infinity value, we assume that it does so in an open interval.
If $F(1)=1$, we say that $F$ is normalized.

 Let $(\Omega,\Sigma,\mu)$ be  a $\sigma$-finite measure space. The \emph{Orlicz space}  over $(\Omega,\Sigma,\mu)$ associated with $F$ is the linear space $L_F(\mu)$ built from the modular
\[
m_F\colon L_0^+(\mu) \to[0,\infty], \quad f\mapsto \int_\Omega F(f)\, d\mu.
\]
This means that $L_F(\mu)=\LL_{\rho_F}$, where $\rho_F$ is the Luxemburg functional constructed from $m_F$. Namely,
\[
\rho_F\colon L_0^+(\mu) \to[0,\infty] ,\quad f\mapsto \inf\{ t>0 \colon m_F(f/t)\le 1\}.
\]
By Lebesgue's dominated convergence theorem if $t:=\rho_F(f)\in(0,\infty)$, then $m_F(f/t)=1$.

It is known \cite{BennettSharpley1988}*{Theorem 8.9} that $\rho_F$ is a rearrangement invariant function norm, that is, $L_F(\mu)$ is a rearrangement invariant function space. Its associated function norm is given by  $(\rho_F)^*=\rho_{F^*}$, where $F^*$ is the complementary Orlicz function of $F$ defined by
\[
F^*(u)=\sup\{ tu-F(t) \colon 0<t<\infty\}.
\]
Notice that $F^*(u)<\infty$ for all $u\in[0,\infty)$ if and only if $\lim_{t\to \infty} F(t)/t=\infty$, in which case the suppremum that defines $F^*(u)$ is attained for every $u$. The other way around, $F$ takes the infinity value if and only if $\lim_{t\to \infty} F^*(t)/t<\infty$. If both $F$ and $F^*$ are finite, then $(F^*)'$ it is the right-inverse of $F'$.

In the case when $\mu$ is the Lebesgue measure on a set $\Omega$, we set $L_F(\mu)=L_F(\Omega)$. In turn, if $\mu$ is the counting measure on a countable set $\Nt$, we set $L_F(\mu)=\ell_F(\Nt)$. If $\Nt=\NN$, we put $\ell_F(\Nt)=\ell_F$. In the discrete case, we will also consider Musielak's generalization of Orlicz spaces. Given a family  $\MF=(F_n)_{n\in\Nt}$ of normalized Orlicz functions, the \emph{Musielak-Orlicz sequence space} $\ell_\MF$ is the function space built from the modular
\[
f=(a_n)_{n=1}^\infty \mapsto m_\MF(f):=\sum_{n\in\Nt} F_n(a_n).
\]

To relate Musielak-Orlicz sequence spaces with basic sequences in Orlicz spaces we define, given an Orlicz function $F$, a  $\sigma$-finite measure space $(\Omega,\Sigma,\mu)$ and  $f\in L_0(\mu)$,
\[
F_f\colon[0,\infty) \to [0,\infty], \quad t\mapsto m_F(t\abs{f})= \int_\Omega F(t \abs{f})\, d\mu.
\]

Let $H_F(\mu)$  consist of all functions $f\in L_0(\mu)$ such that $F_f(t)<\infty$ for all $t\in[0,\infty)$. It is known that $H_F(\mu)$ is the closed linear span in  $L_F(\mu)$ of the integrable simple functions.

The \emph{flows} of a finite Orlicz function $F$ are defined for each $s\in(0,\infty)$ as
\[
F_s(t)=\frac{F(st)}{F(s)},\quad  t\ge  0.
\]
Notice that $F_s$ is a normalized Orlicz function.
\begin{lemma}\label{lem:MSMO}
Let $(\Omega,\Sigma,\mu)$ be  a $\sigma$-finite measure space and $F$ be a finite Orlicz function.
\begin{enumerate}[label=(\roman*),leftmargin=*,widest=iii]
\item\label{it:MSMOa} Let $f$ be a norm-one function in $H_F$. Then, $F_f$ is a normalized finite Orlicz function.
\item\label{it:MSMOb}  Let $f$ be an integrable simple funtion with $\norm{f}_F=1$. Then, $F_f$ belongs to the convex hull of $\{ F_s \colon s\in f(\Omega)\setminus\{0\}\}$.
\item\label{it:MSMOc} Let $\Phi=(\bphi_n)_{n=1}^\infty$ be a normalized disjointly supported sequence in $H_F$. Then, $\Phi$ is isometrically equivalent to the unit vector system of Musielak-Orlicz sequence space $\ell_\MF$, where $\MF=(F_{\bphi_n})_{n=1}^\infty$.
\end{enumerate}
\end{lemma}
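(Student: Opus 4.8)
The plan is to handle the three items in order, with \ref{it:MSMOa} providing the normalized Orlicz functions that make \ref{it:MSMOc} meaningful; at bottom each item is a modular computation together with the fact recorded right after the definition of $\rho_F$ that $g\in H_F$ with $\norm{g}_F=1$ forces $m_F(\abs{g})=1$. For \ref{it:MSMOa} I would verify the defining properties of a normalized finite Orlicz function for $t\mapsto F_f(t)=\int_\Omega F(t\abs{f})\,d\mu$. For each fixed $\omega$ the map $t\mapsto F(t\abs{f(\omega)})$ is non-decreasing, convex (a convex non-decreasing function precomposed with a linear one), and vanishes at $0$; integrating over $\omega$ preserves all three, so $F_f$ is a non-decreasing convex function with $F_f(0)=0$. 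Finiteness of $F_f(t)$ for every $t\ge 0$ is exactly the condition that $f\in H_F$, and $F_f(1)=m_F(\abs{f})=1$ since $\norm{f}_F=1$ and $f\in H_F$. Hence $F_f$ is a normalized finite Orlicz function.

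For \ref{it:MSMOb}, let $s_1,\dots,s_k$ be the distinct nonzero values of $\abs{f}$ and set $E_i=\{\abs{f}=s_i\}$; since $f$ is integrable and simple, $\mu(E_i)<\infty$, hence $f\in H_F$ and $m_F(\abs{f})=1$. Then, for every $t\ge 0$,
\[
F_f(t)=\sum_{i=1}^{k}\mu(E_i)\,F(ts_i)=\sum_{i=1}^{k}\bigl(\mu(E_i)F(s_i)\bigr)F_{s_i}(t),
\]
where we use that $F(s_i)>0$ (which the very definition of the flow $F_{s_i}$ presupposes). The coefficients $\lambda_i:=\mu(E_i)F(s_i)$ are non-negative and $\sum_{i=1}^{k}\lambda_i=\int_\Omega F(\abs{f})\,d\mu=m_F(\abs{f})=1$, so $F_f=\sum_{i=1}^{k}\lambda_i F_{s_i}$ displays $F_f$ as a convex combination of the flows $F_s$, $s\in \abs{f}(\Omega)\setminus\{0\}$.

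For \ref{it:MSMOc}, each $F_{\bphi_n}$ is a normalized finite Orlicz function by \ref{it:MSMOa}, so $\ell_{\MF}$ with $\MF=(F_{\bphi_n})_{n=1}^\infty$ is a bona fide Musielak--Orlicz sequence space. Fix a finitely supported scalar sequence $(a_n)_{n=1}^\infty$. Because the $\bphi_n$ are pairwise disjointly supported, at $\mu$-a.e.\ point at most one summand of $\sum_n a_n\bphi_n$ is nonzero, whence $\bigl\lvert\sum_n a_n\bphi_n\bigr\rvert=\sum_n\abs{a_n}\abs{\bphi_n}$ and $F\bigl(\sum_n\abs{a_n}\abs{\bphi_n}\bigr)=\sum_n F(\abs{a_n}\abs{\bphi_n})$, both as pointwise identities (the second uses $F(0)=0$). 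Integrating and applying Tonelli's theorem,
\[
m_F\Bigl(\sum_n a_n\bphi_n\Bigr)=\sum_n\int_\Omega F(\abs{a_n}\abs{\bphi_n})\,d\mu=\sum_n m_F(\abs{a_n}\bphi_n)=\sum_n F_{\bphi_n}(\abs{a_n})=m_{\MF}\bigl((a_n)_{n=1}^\infty\bigr).
\]
Since the Luxemburg norm is completely determined by the modular, $\norm{\sum_n a_n\bphi_n}_F=\norm{(a_n)_{n=1}^\infty}_{\ell_{\MF}}$ for every finitely supported sequence; passing to closed linear spans, $\bphi_n\mapsto e_n$ extends to an isometric isomorphism of $[\bphi_n\colon n\in\NN]$ onto the closed span of the unit vector system of $\ell_{\MF}$, i.e.\ $\Phi$ is isometrically equivalent to that system.

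None of the steps is deep. The points that need the most care are the well-definedness caveat in \ref{it:MSMOb} (without $F(s)>0$ for the relevant $s$, a value $s$ with $F(s)=0$ contributes a term to $F_f$ that is not a positive multiple of any flow) and insisting that the disjoint-support manipulations in \ref{it:MSMOc} be carried out as pointwise, not merely norm, identities so that the modular computation goes through verbatim; everything else is a direct check against the definitions of $\rho_F$, $m_F$ and the flows.
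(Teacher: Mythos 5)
Your proof is correct and proceeds exactly as the paper does: direct modular computations, the expansion of a simple function into flows with coefficients $\mu(E_i)F(s_i)$ summing to $1$ for \ref{it:MSMOb}, and the disjoint-support modular identity $m_F\bigl(\sum_n a_n\bphi_n\bigr)=m_{\MF}\bigl((a_n)_n\bigr)$ giving equality of Luxemburg norms for \ref{it:MSMOc} (the paper declares \ref{it:MSMOa} and \ref{it:MSMOb} routine and records the same expansion; you merely write out the details, including the $m_F(\abs{f})=1$ normalization for functions in $H_F$). Your caveat about needing $F(s)>0$ for the flows $F_s$ to be defined concerns the statement itself rather than the proof, and the paper glosses over it in the same way.
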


\begin{proof}
Proving \ref{it:MSMOa} and \ref{it:MSMOb} is routine checking. To prove \ref{it:MSMOc}, we expand $f$ as $\sum_{j\in J} s_j \chi_{A_j}$ with $(A_j)_{j\in J}$ pairwise disjoint and $s_j\not=0$ for all $j\in J$. We have
\[
F_f(t)= \sum_{j\in J} \abs{A_j} F(a_j) F_{\abs{s_j}}(t), \quad t\ge 0.
\]
In particular, $1=F_f(1)=  \sum_{j\in J} \abs{A_j} F(a_j)$.
\end{proof}

Let $(\Omega,\Sigma,\mu)$ be  a $\sigma$-finite measure space and $F$ be a finite Orlicz function. Suppose that $(\Omega,\Sigma,\mu)$ is purely atomic. If $(A_n)_{n\in\Nt}$ is a family of representatives of its atoms we put  
\[
\MF=(F_{s_n})_{n\in\Nt}, \quad s_n=F^{-1}(\mu(A_n)^{-1}).
\]
By Lemma~\ref{lem:MSMO}, the mapping 
\[
(a_n)_{n\in\Nt}\mapsto \sum_{n\in\Nt} a_n s_n\chi_{A_n}
\]
defines a lattice isometry from the Musielak-Orlicz sequence space $\ell_\MF$ onto the Orlicz space $L_F(\mu)$. Oppositely, if $\mu$ is nonatomic and separable, then, by Proposition~\ref{prop:RI}, $L_F(\mu)$ is lattice isomorphic to $L_F(I)$, where $I=[0,1)$ if $\mu$ is finite and   $I=[0,\infty)$ is $\mu$ is infinite. In  this paper, we will study the Orlicz function space $L_F([0,1))$, which we will simply call $L_F$. Notice that an Orlicz function $F$ takes the infinity value if and only if $L_F=L_\infty$. 

Given a finite Orlicz function $F$, the  function norm associated with the Orlicz function space $L_F$ is absolutely continuous if and only if $F$ is finite and doubling near infinity, i.e.,
\[
\sup_{t\ge 1} \frac{F(2t)}{F(t)}<\infty
\]
(see \cite{LinTza1977}*{\S4}).
In turn, the function norm associated with Orlicz sequence space $\ell_F$ is absolutely continuous if and only if $F$ is doubling near zero, i.e.,
\[
\sup_{t\le 1} \frac{F(2t)}{F(t)}<\infty
\]
(see \cite{LinTza1979}*{\S2a}). These results are consistent with the facts that, given Orlicz functions $F$ and $G$, $L_F=L_G$ (up to an equivalent norm) if and only if $F$ and $G$ are  equivalent near infinity, while $\ell_F=\ell_G$ if and only if $F$ and $G$ are  equivalent near zero. Musielak \cite{Musielak1983} extended this characterization of Orlicz functions that define the same Orlicz spaces. As sequence spaces are concerned, he proved the following.

\begin{theorem}[\cite{Musielak1983}*{Theorem 8.11}]\label{thm:MOEmbedding}
Let $\MF=(F_n)_{n=1}^\infty$ and $\MG=(G_n)_{n=1}^\infty$ be sequences of normalized Orlicz functions. Then $\ell_\MF \subseteq  \ell_\MG$ if and only if there exist a positive sequence $(a_n)_{n=1}^\infty$ in $\ell_1$, some $\delta>0$, and positive constants $b$ and $C$ such that
\begin{equation*}
F_n(t)<\delta \quad \Longrightarrow \quad G_n(t) \le C F_n(bt) +a_n.
\end{equation*}
\end{theorem}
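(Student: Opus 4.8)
The plan is to prove the two implications separately; the sufficiency direction will be a routine tail estimate, and essentially all the work is in the converse.

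For \emph{sufficiency}, I would take $f=(c_n)_{n=1}^\infty\in\ell_\MF$, fix $\lambda_0>0$ with $\sum_nF_n(\lambda_0\abs{c_n})<\infty$, and first observe that one may assume $b\ge 1$: replacing $b$ by $\max\{b,1\}$ only enlarges $F_n(bt)$, since the $F_n$ are non-decreasing, so the hypothesis is preserved. Since $F_n(\lambda_0\abs{c_n})\to 0$, there is $N$ with $F_n(\lambda_0\abs{c_n}/b)\le F_n(\lambda_0\abs{c_n})<\delta$ for all $n\ge N$, and applying the hypothesis with $t=\lambda_0\abs{c_n}/b$ gives $\sum_{n\ge N}G_n(\lambda_0\abs{c_n}/b)\le C\sum_nF_n(\lambda_0\abs{c_n})+\sum_na_n<\infty$. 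The finitely many remaining coordinates are harmless because each normalized $G_n$ is finite on $[0,1]$, so shrinking the dilation constant further yields $m_\MG(\lambda f)<\infty$ for some $\lambda>0$; hence $f\in\ell_\MG$.

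For \emph{necessity}, the first step is to quantify the inclusion: since $\ell_\MF$ and $\ell_\MG$ are Banach spaces and each coordinate functional is continuous on each of them, the graph of the inclusion is closed, so by the closed graph theorem there is $K\ge 1$ with $\norm{g}_\MG\le K\norm{g}_\MF$ for all $g\in\ell_\MF$. Using the standard equivalence ``$\norm{g}\le 1$ iff the corresponding modular is $\le 1$'', I would rephrase this as the \emph{modular domination} $m_\MF(g)\le 1\Rightarrow m_\MG(g/K)\le 1$. The second step is to choose the parameters and \emph{define} the error sequence: set $b=K$, $C=2$, $\delta=1$, and put $a_n=\sup\{\max\{G_n(t)-CF_n(bt),0\}\colon F_n(t)<1\}$. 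Then the implication $F_n(t)<1\Rightarrow G_n(t)\le CF_n(bt)+a_n$ holds by construction, and it only remains to check that $(a_n)\in\ell_1$. Here normalization is crucial: if $F_n(t)<1$ then $t<1$, whence $G_n(t)\le G_n(1)=1$; in particular $0\le a_n\le 1$ and the value $\infty$ never intervenes in the relevant range.

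The decisive third step is a packing argument by contradiction. If $\sum_na_n=\infty$, choose for each $n$ a near-optimal $t_n$ with $F_n(t_n)<1$, $G_n(t_n)>a_n/2$, and $F_n(bt_n)<G_n(t_n)/C$ (note that $F_n(bt_n)<\infty$ automatically, since otherwise that $t$ contributes $0$ to the supremum). As each $G_n(t_n)\le 1$ and $\sum_nG_n(t_n)\ge\tfrac12\sum_na_n=\infty$, one can greedily select a \emph{finite} set $S$ with $1<\sum_{n\in S}G_n(t_n)\le 2$. Letting $g$ be the sequence with $g_n=bt_n$ for $n\in S$ and $g_n=0$ otherwise, we get $m_\MF(g)=\sum_{n\in S}F_n(bt_n)<\tfrac1C\sum_{n\in S}G_n(t_n)\le 2/C=1$, while $m_\MG(g/K)=\sum_{n\in S}G_n(bt_n/K)=\sum_{n\in S}G_n(t_n)>1$, contradicting the modular domination. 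I expect this last step to be the main obstacle: one must hit on the right values of $b$, $C$, $\delta$ and, above all, use the closed graph bound in its \emph{modular} rather than norm form, since it is precisely the ability to load many coordinates into a single vector that converts divergence of $\sum a_n$ into a genuine failure of $\ell_\MF\subseteq\ell_\MG$.
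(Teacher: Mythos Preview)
The paper does not give its own proof of this statement: it is quoted verbatim as \cite{Musielak1983}*{Theorem 8.11} and used as a black box in the proof of Theorems~\ref{thm:BSOA}--\ref{thm:BSOC}. So there is nothing in the paper to compare your argument against.

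That said, your proposal is a correct and self-contained proof. The sufficiency direction is exactly the expected tail estimate. In the necessity direction, the three ingredients you isolate---passing from the closed-graph bound $\norm{g}_\MG\le K\norm{g}_\MF$ to the modular implication $m_\MF(g)\le 1\Rightarrow m_\MG(g/K)\le 1$, defining $a_n$ as the supremum of the defect $G_n(t)-2F_n(Kt)$ over the range $F_n(t)<1$, and then packing finitely many near-optimal $t_n$'s into a single vector that violates the modular bound---are precisely the ideas in Musielak's original proof. Two small remarks: first, when $a_n=0$ there is no near-optimal $t_n$ to pick, so you should restrict the packing step to those $n$ with $a_n>0$ (this is harmless since $\sum_n a_n=\infty$ is preserved); second, the statement asks for a \emph{positive} sequence, while your $a_n$ may vanish, but replacing $a_n$ by $a_n+2^{-n}$ fixes this without affecting the implication.
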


The convexity-type and concavity-type of Orlicz spaces is known. 
\begin{theorem}[see \cite{JMST1979}*{Section 7}]\label{thm:OrliczConvexConcave}
Given a finite Orlicz function $F$ and $1< p<\infty$  the following are equivalent. 
\begin{enumerate}[label=(\roman*),leftmargin=*,widest=iii]
\item $L_F$ is lattice $p$-convex (resp., concave).
\item The function $t\mapsto F(t) t^{-p}$ is essentially increasing (resp., decreasing) on $[1,\infty)$.
\item $F$ is equivalent near infinity to an Orlicz function $G$ such that the function  $t\mapsto G(t^{1/p})$ is convex (resp., concave) on $(0,\infty)$.
\end{enumerate}
Moreover, $L_F$ has some nontrivial concavity if and only if $F$ is doubling near infinity.
\end{theorem}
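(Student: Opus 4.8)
The plan is to prove the chain of implications (iii)~$\Rightarrow$~(i)~$\Rightarrow$~(ii)~$\Rightarrow$~(iii) in the convex case, to deduce the concave case from the convex one through the duality $F\rightsquigarrow F^{*}$, $p\rightsquigarrow p'$, and finally to read off the last assertion from the equivalence (i)~$\Leftrightarrow$~(ii). Since $L_F=L_F([0,1))$ depends, up to an equivalent lattice norm, only on the germ of $F$ at infinity, and since lattice $p$-convexity and $p$-concavity pass to lattice-isomorphic spaces (at the cost of the constant), throughout I may freely replace $F$ by any Orlicz function equivalent to it near infinity; conditions (ii) and (iii) are plainly invariant under such a replacement.

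\emph{The two ``constructive'' implications, convex case.} For (iii)~$\Rightarrow$~(i), replace $F$ by the function $G$ of (iii) and put $H(u):=G(u^{1/p})$, which is a convex Orlicz function. A change of variables $s=r^{p}$ in the Luxemburg functional shows that $\norm{f}_{(L_H)^{(p)}}=\norm{\,\abs{f}^{p}\,}_{L_H}^{1/p}=\norm{f}_{L_G}$, where $(L_H)^{(p)}$ denotes the $p$-convexification of $L_H$; since the $p$-convexification of any Banach lattice is lattice $p$-convex \cite{LinTza1979}, so is $L_F$. For (ii)~$\Rightarrow$~(iii), one first checks, using convexity of $F$ and the elementary bounds $F(t)/t\le F'(t)\le F(2t)/t$ (with $F'$ the right derivative), that (ii) is equivalent to the assertion that $t\mapsto F'(t)\,t^{-(p-1)}$ is essentially increasing on $[1,\infty)$. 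Choosing a non-decreasing $\widetilde{h}$ equivalent near infinity to $u\mapsto\tfrac1p u^{1/p-1}F'(u^{1/p})$, setting $K(u)=\int_{0}^{u}\widetilde{h}(s)\,ds$ and $G(t)=K(t^{p})$, one finds that $G$ is a convex Orlicz function, that $t\mapsto G(t^{1/p})=K(t)$ is convex, and, after the substitution $v=w^{p}$, that $G(t)=\int_{0}^{t}\widetilde{h}(w^{p})\,pw^{p-1}\,dw\approx\int_{0}^{t}F'(w)\,dw\approx F(t)$ for large $t$; hence $F\sim G$ near infinity and (iii) holds.

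\emph{The crux: (i)~$\Rightarrow$~(ii), convex case.} Assume $L_F$ is lattice $p$-convex with constant $D$, and write $q(r)=F(r)r^{-p}$. Fix a large $r$; for an integer $N\le F(r)$ choose pairwise disjoint $B_1,\dots,B_N\subseteq[0,1)$ with $\abs{B_j}=1/F(r)$, so that $\norm{r\chi_{B_j}}_F=1$ for each $j$. Applying lattice $p$-convexity to the functions $r\chi_{B_1},\dots,r\chi_{B_N}$, whose $p$-th power sum is $r^{p}\sum_j\chi_{B_j}$, gives $r\,/\,F^{-1}(F(r)/N)\le DN^{1/p}$, that is $F\!\left(r/(DN^{1/p})\right)\le F(r)/N$, which rearranges to $q\!\left(r/(DN^{1/p})\right)\le D^{p}q(r)$. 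Taking $N=\floor{F(r)}$ forces $r/(DN^{1/p})\le F^{-1}(2)$, whence $F(r)\ge c\,r^{p}$ for a positive constant $c$ and all large $r$. Feeding this back, as $N$ ranges over $\{1,\dots,\floor{F(r)}\}$ the points $r/(DN^{1/p})$ sweep, with bounded multiplicative gaps, the interval $\bigl[r/(DF(r)^{1/p}),\,r/D\bigr]$, which contains $[c',r/D]$ for a fixed constant $c'$; hence $q(u)\le C\,q(r)$ whenever $c'\le u\le r/D$. For $r/D\le u\le r$ the bound $q(u)\le D^{p}q(r)$ is immediate from monotonicity of $F$, and for $1\le u\le c'$ it follows from $q$ being bounded on $[1,c']$ together with the lower bound $q(r)\ge c$. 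Altogether $q$ is essentially increasing on $[1,\infty)$ (the comparison for bounded arguments being trivial by compactness), i.e.\ (ii) holds.

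\emph{The concave case, the last assertion, and the obstacle.} If $L_F$ is lattice $p$-concave it has Rademacher cotype $p$ (Theorem~\ref{thm:RadConv}), hence nontrivial cotype, hence $\rho_F$ is absolutely continuous (Theorem~\ref{thm:CotypeAC}); likewise, if $t\mapsto F(t)t^{-p}$ is essentially decreasing then $F(2t)\le C2^{p}F(t)$, so $F$ is doubling near infinity and $\rho_F$ is again absolutely continuous. In either case $(L_F)^{*}=L_{F^{*}}$ isometrically, and for an order-continuous Banach lattice $X$ one has that $X$ is lattice $p$-concave if and only if $X^{*}$ is lattice $p'$-convex \cite{LinTza1979}. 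Combining this with the convex case already established and with the classical facts that complementary functions respect equivalence near infinity, that $t\mapsto F(t)t^{-p}$ is essentially decreasing on $[1,\infty)$ precisely when $t\mapsto F^{*}(t)t^{-p'}$ is essentially increasing there, and that $t\mapsto F(t^{1/p})$ is concave precisely when $t\mapsto F^{*}(t^{1/p'})$ is convex, one obtains (i)~$\Leftrightarrow$~(ii)~$\Leftrightarrow$~(iii) for concavity; alternatively, (ii)~$\Leftrightarrow$~(iii) can also be proved directly by the construction used in the convex case. Finally, $L_F$ has nontrivial concavity if and only if it is lattice $q$-concave for some finite $q$, which by (i)~$\Leftrightarrow$~(ii) happens if and only if $t\mapsto F(t)t^{-q}$ is essentially decreasing on $[1,\infty)$ for some $q$; and this is equivalent to $F$ being doubling near infinity---necessity being the implication $F(2t)\le C2^{q}F(t)$, and sufficiency following by iterating $F(2t)\le KF(t)$ and taking any $q>\log_{2}K$. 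The step I expect to be the main obstacle is (i)~$\Rightarrow$~(ii): the point is to convert the purely isometric/lattice hypothesis on $L_F$ into a pointwise growth estimate on $F$, and the mechanism that makes this work is to test $p$-convexity on suitable families of disjointly supported slabs $r\chi_{B_j}$ and to extract first the lower bound $F(r)\gtrsim r^{p}$, which removes the only genuine obstruction---fitting the sets $B_j$ inside $[0,1)$---to the desired comparison.
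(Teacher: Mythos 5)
The paper itself gives no proof of this theorem: it is quoted from \cite{JMST1979}*{Section 7}, so your argument is not a variant of an internal proof but an independent, self-contained derivation, and as such it is essentially correct. The three key moves all work: the identity $\norm{f}_{L_G}=\norm{\,\abs{f}^p\,}_{L_H}^{1/p}$ for $H(u)=G(u^{1/p})$ does exhibit $L_G$ as a $p$-convexification, giving (iii)$\Rightarrow$(i); testing $p$-convexity on $r\chi_{B_1},\dots,r\chi_{B_N}$ with $\abs{B_j}=1/F(r)$ correctly yields $F\bigl(r/(DN^{1/p})\bigr)\le F(r)/N$, and your two-stage use of it (first $N=\lfloor F(r)\rfloor$ to force $F(r)\ge c\,r^p$, then the multiplicative net $r/(DN^{1/p})$) does give essential monotonicity of $F(t)t^{-p}$, which is the genuinely nontrivial implication (i)$\Rightarrow$(ii); and the smoothing $G(t)=\int_0^t\widetilde h(w^p)\,p\,w^{p-1}\,dw$ is the standard route from (ii) to (iii). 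Compared with the citation, what your approach buys is an elementary quantitative argument with explicit constants depending only on the convexity constant $D$, carried out entirely inside the Orlicz setting rather than the general rearrangement-invariant framework of \cite{JMST1979}.

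Two steps are asserted more briskly than they can bear as written. First, the equivalence of (ii) with essential monotonicity of $F'(t)t^{-(p-1)}$ does not follow from the bounds $F(t)/t\le F'(t)\le F(2t)/t$ alone: if $C$ is the essential-increase constant of $q(t)=F(t)t^{-p}$, the one-step chord estimate only gives $F'(s)s^{1-p}\ge 2\bigl(q(s)-2^{-p}q(s/2)\bigr)\ge 2q(s)\bigl(1-2^{-p}C\bigr)$, which is vacuous when $C\ge 2^p$. You must take the chord over $[s/2^k,s]$ with $2^{kp}\ge 2C$ to get $F'(s)s^{1-p}\ge q(s)/2$ (or be content with the dilation-tolerant bound $F'(t)t^{1-p}\lesssim F'(2s)(2s)^{1-p}$ for $t\le s$, which still suffices for your construction of $G$ up to equivalence near infinity); this is a short fix, but it is a real missing step. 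Second, the duality reduction of the concave case tacitly assumes that $F^*$ is a finite Orlicz function, i.e.\ $\lim_{t\to\infty}F(t)/t=\infty$; otherwise $F$ is equivalent to $t$ near infinity, so $L_F=L_1$ and $L_{F^*}=L_\infty$, and ``the convex case already established'' does not apply to $F^*$ as stated. That exceptional case is harmless --- (i), (ii) and (iii) in the concave version all hold trivially for every $p$ --- but it must be split off explicitly. With these two repairs the proof is complete.
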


Lindberg \cite{Lindberg1973} and Lindenstrauss and Tzafriri \cites{LinTza1971,LinTza1972, LinTza1973} studied the basic sequence structure of a given Orlicz sequence space $\ell_F$ in terms of subsets of $\Cont([0,1/2])$ constructed from the flows $F_s$ of the Orlicz function $F$ for $s$ near zero. To study Orlicz function spaces we must consider flows $F_s$ for $s$ near infinity. Suppose that $F$ is normalized. Given $b\in[0,\infty)$, let $E_{F,b}$ be the topological closure of $\{ F_s  \colon s> b\}$ in $(\Cont([0,1/2]), \norm{\cdot}_\infty)$, and $C_{F,b}$ be the topological closure of the convex hull of $\{ F_s  \colon s> b\}$. We define
\[
E_F =\bigcap_{b\ge 0} E_{F,b}, \quad C_F =\bigcap_{b\ge 0} C_{F,b}.
\]
By definition, 
\[
E_F\subseteq C_F\subseteq C_{F,0}.
\]

Each function $G\in C_{F,0}$ inherits from the flows of $F$ the following properties:
\begin{itemize}
\item $G(0)=0$.
\item $G$ is non-decreasing.
\item $G$ is convex.
\item For all $0\le t<u\le 1/2$,
\[
 \frac{G(u)-G(t)}{u-t} \le 2(G(1)-G(1/2)).
 \]
In particular, $G(1/2) \le 1/2F(1)$ and $G$ is $2F(1)$-Lipschitz.

\end{itemize}
Consequently, the extension of $G$ to $[0,\infty)$ that is linear on $[1/2,\infty)$ and satisfies $G(1)=F(1)$ is an Orlicz function. So, we can define $G^*$ and $\ell_G$ for $G\in C_{F,0}$. Notice that, a priori, $G$ could be null near zero, in which case $\ell_G=\ell_\infty$ and $G^*(t) \approx t$ for $t$ near zero.

The set $C_{F,0}$ is equicontinuous and uniformly bounded, hence compact. Therefore, $E_F$ and $C_F$ are  compact subsets of $\Cont([0,1/2])$, and $E_F$ is nonempty  (c.f.\ \cite{LinTza1977}*{Lemma 4.a.6}).

Once written the necessary background on Orlicz spaces down, we state the main results of this section. Since the foundations they rely on are closely related, we carry out a unified proof.

\begin{theorem}\label{thm:BSOA}
Let $F$ be a finite Orlicz function. Suppose that $F$ is doubling near infinity and that the mapping $t\mapsto F(t) t^{-2}$ is essentially increasing on $[1,\infty)$. Then, any subsymmetric basic sequence $\Psi$ in $L_F$ is equivalent to the unit vector system of $\ell_2$ or $\ell_G$ for some $G\in C_F$.
\end{theorem}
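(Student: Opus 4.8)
The strategy is to reduce this to the direct-sum machinery already developed, most directly to Theorem~\ref{thm:Type:Absolutely} and Corollary~\ref{cor:Type:Absolutely}. First I would note that the hypotheses on $F$ are exactly those guaranteeing, via Theorem~\ref{thm:OrliczConvexConcave}, that $L_F$ is lattice $2$-convex and has nontrivial concavity (the doubling condition near infinity); by Theorem~\ref{thm:RadConvBis} this means $L_F$ has Rademacher type $2$. Lattice $2$-convexity also gives $L_F\subseteq L_2$ (the remark following Corollary~\ref{thm:Type}, citing \cite{LinTza1979}). So $L_F$ satisfies the hypotheses of Corollary~\ref{cor:Type:Absolutely}: any semi-normalized subsymmetric basic sequence $\Psi$ in $L_F$ is either equivalent to the unit vector system of $\ell_2$, or equivalent to a disjointly supported sequence $\Phi=(\bphi_n)_{n=1}^\infty$ of simple functions in $L_F$ escaping to infinity.

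\textbf{Identifying the disjoint case.} It remains to show that in the second case $\Psi$ is equivalent to the unit vector system of $\ell_G$ for some $G\in C_F$. Since $F$ is doubling near infinity, $\rho_F$ is absolutely continuous, so $L_F=H_F$ and the simple functions $\bphi_n$ lie in $H_F$. By Lemma~\ref{lem:MSMO}\ref{it:MSMOc}, $\Phi$ is isometrically equivalent to the unit vector system of the Musielak--Orlicz space $\ell_\MF$ with $\MF=(F_{\bphi_n})_{n=1}^\infty$, and by Lemma~\ref{lem:MSMO}\ref{it:MSMOb} each $F_{\bphi_n}$ lies in the convex hull of $\{F_s\colon s\in\bphi_n(\Omega)\setminus\{0\}\}$. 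Because $\Phi$ escapes to infinity, the essential infimum of $|\bphi_n|$ over its support tends to $\infty$; hence for every $b\ge 0$ we have $\bphi_n(\Omega)\setminus\{0\}\subseteq (b,\infty)$ for all large $n$, so $F_{\bphi_n}\in C_{F,b}$ for all large $n$, and therefore every cluster point of $(F_{\bphi_n})$ in $\Cont([0,1/2])$ lies in $C_F$. Using the compactness of $C_{F,0}$ and subsymmetry of $\Psi$ (which lets us pass to a further subsequence without changing the equivalence class), I would extract a subsequence along which $F_{\bphi_n}\to G$ uniformly on $[0,1/2]$ for some $G\in C_F$, and then invoke the stability of Musielak--Orlicz structure under small perturbations — concretely Theorem~\ref{thm:MOEmbedding} applied in both directions — to conclude that the unit vector system of $\ell_\MF$ (restricted to that subsequence, hence $\Psi$ itself by subsymmetry) is equivalent to that of $\ell_G$.

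\textbf{Main obstacle.} The delicate point is the last step: deducing from $F_{\bphi_n}\to G$ uniformly on $[0,1/2]$ that $\ell_{(F_{\bphi_n})_n}$ coincides (up to equivalent norm) with $\ell_G$. Uniform convergence on the fixed interval $[0,1/2]$, together with the uniform Lipschitz and convexity bounds satisfied by all members of $C_{F,0}$, should be enough to verify the two-sided Musielak condition of Theorem~\ref{thm:MOEmbedding}: for a suitable $\delta>0$ the inequality $F_{\bphi_n}(t)<\delta$ forces $t$ to lie in a range where $|F_{\bphi_n}(t)-G(t)|$ can be absorbed into a $C F_{\bphi_n}(bt)+a_n$ estimate with $(a_n)\in\ell_1$ (taking $a_n=\|F_{\bphi_n}-G\|_\infty$ along a fast-decaying subsequence), and symmetrically with the roles reversed. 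Care is needed because $G$ may be degenerate near zero (where $\ell_G=\ell_\infty$), but in that case one argues that the $F_{\bphi_n}$ are likewise asymptotically flat near zero and the comparison still goes through; the uniform bounds on $C_{F,0}$ recalled just before the theorem statement are precisely what make this robust. Assembling these estimates, together with the reductions above, completes the proof.
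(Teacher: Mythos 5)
Your proposal is correct and follows essentially the same route as the paper: Theorem~\ref{thm:OrliczConvexConcave} plus Theorem~\ref{thm:RadConvBis} and the $2$-convexity remark give type $2$ and $L_F\subseteq L_2$, Corollary~\ref{cor:Type:Absolutely} yields the dichotomy, and in the disjoint case Lemma~\ref{lem:MSMO}, escape to infinity (so $F_{\bphi_n}\in C_{F,b_n}$ with $b_n\to\infty$), compactness of $C_{F,0}$, Theorem~\ref{thm:MOEmbedding}, and subsymmetry finish the argument. The step you flag as the main obstacle is handled in the paper exactly as you suggest: fix a summable sequence $(a_n)$ in advance, pass to a subsequence with $\norm{F_{\bphi_n}-G}_\infty\le a_n$, and then Musielak's criterion applies in both directions with $C=b=1$, no extra care about degeneracy near zero being needed for the stated conclusion.
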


\begin{theorem}\label{thm:BSOB}
Let $F$ be a finite Orlicz function. Suppose that  the mapping  $t\mapsto F(t) t^{-2}$ is essentially decreasing on $[1,\infty)$. Then, any complemented subsymmetric basic sequence $\Psi$ in $L_F$ is equivalent to the unit vector system of $\ell_2$ or $\ell_G$ for some $G\in C_F$.
\end{theorem}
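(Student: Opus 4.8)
The plan is to reduce Theorem~\ref{thm:BSOB} to the abstract dichotomy of Theorem~\ref{thm:Cotype:Absolutely} and then recognize the disjointly supported block sequence it produces as the unit vector system of an Orlicz sequence space $\ell_G$, using Lemma~\ref{lem:MSMO} and Musielak's embedding Theorem~\ref{thm:MOEmbedding}. Throughout I may assume $F$ is normalized, since passing from $F$ to $F/F(1)$ alters neither $L_F=L_F([0,1))$ up to an equivalent norm nor the flows $F_s$. If $t\mapsto F(t)t^{-2}$ is essentially decreasing on $[1,\infty)$, then taking $v=2u$ in the defining inequality gives $F(2u)\lesssim F(u)$ for $u\ge 1$, so $F$ is doubling near infinity; hence $\rho_F$ is absolutely continuous and, by Theorem~\ref{thm:OrliczConvexConcave} with $p=2$, $L_F$ is lattice $2$-concave. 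Consequently $L_F$ has Rademacher cotype $2$ by Theorem~\ref{thm:RadConvBis}, and, being rearrangement invariant over $[0,1)$ and lattice $2$-concave, it contains $L_2([0,1))$ (the remark following Corollary~\ref{thm:Type}). Thus $\rho_F$ over the Lebesgue measure on $[0,1)$ satisfies all the hypotheses of Theorem~\ref{thm:Cotype:Absolutely}.

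Now let $\Psi=(\bpsi_n)_{n=1}^\infty$ be a complemented subsymmetric basic sequence; it is, in particular, a semi-normalized complemented unconditional basic sequence, so fix projecting functionals $\Psi^*=(\bpsi_n^*)_{n=1}^\infty$ and apply Theorem~\ref{thm:Cotype:Absolutely}. If $\inf_n\norm{\bpsi_n^*}_2>0$, then case~\ref{it:Cotype:AbsolutelyA} already gives that $\Psi$ is equivalent to the unit vector system of $\ell_2$. Otherwise case~\ref{it:Cotype:AbsolutelyB} produces a subsequence of $\Psi$ equivalent to a disjointly supported sequence of integrable simple functions escaping to infinity; since $\Psi$, being subsymmetric, is equivalent to each of its subsequences, after dividing by the norms I obtain a normalized disjointly supported sequence $\Phi=(\bphi_n)_{n=1}^\infty$ of simple functions in $H_F$, escaping to infinity, equivalent to $\Psi$ (hence itself subsymmetric). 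Only equivalence, not congruence, is needed here, so no assumption on $F^*$ comes into play.

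By Lemma~\ref{lem:MSMO}\ref{it:MSMOc} (and its proof), $\Phi$ is isometrically equivalent to the unit vector system of $\ell_\MF$, where $\MF=(F_{\bphi_n})_{n=1}^\infty$ and each $F_{\bphi_n}$ is a convex combination of flows $F_s$ with $s\ge\essinf\{\abs{\bphi_n(\omega)}\colon\omega\in\supp(\bphi_n)\}$. Since $\Phi$ escapes to infinity, for every $b\ge 0$ one has $F_{\bphi_n}\in C_{F,b}$ for all large $n$. Because $C_{F,0}$ is a compact subset of $\Cont([0,1/2])$ and $\Phi$ is subsymmetric --- so I may keep passing to subsequences --- I may assume that $F_{\bphi_n}\to G$ uniformly on $[0,1/2]$ and that $\sum_n\norm{F_{\bphi_n}-G}_\infty<\infty$; since each $C_{F,b}$ is closed, $G\in\bigcap_{b\ge 0}C_{F,b}=C_F$. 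The doubling of $F$ near infinity gives $F_{\bphi_n}(1/2)\gtrsim 1$ for large $n$ (indeed $F_{\bphi_n}(1/2)=\int F(\frac12\abs{\bphi_n})\gtrsim\int F(\abs{\bphi_n})=1$), so $G$ does not vanish near zero and therefore extends to a genuine Orlicz function with $\ell_G\ne\ell_\infty$. Finally, keeping the names after passing to the subsequence, $\Phi$ is isometrically equivalent to the unit vector system of $\ell_\MF$, and an application of Theorem~\ref{thm:MOEmbedding} in both directions --- with $b=C=1$, with the $\ell_1$-sequence $(\norm{F_{\bphi_n}-G}_\infty)_n$, and with $\delta$ chosen small enough that $F_{\bphi_n}(t)<\delta$ forces $t\in[0,1/2)$ --- shows that the unit vector systems of $\ell_\MF$ and of $\ell_G$ are equivalent. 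Chaining the equivalences, $\Psi$ is equivalent to the unit vector system of $\ell_G$ for some $G\in C_F$.

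The real content is concentrated in the third paragraph: the decisive point is that \emph{escaping to infinity} is precisely the hypothesis forcing the relevant flows $F_s$ of $F$ to have $s$ near infinity, so that the normalized modulars $F_{\bphi_n}$ --- convex combinations of such flows --- cluster at $C_F$; the rest is packaging, using compactness of $C_{F,0}$ and the freedom, granted by subsymmetry, to thin out $\Phi$. The one delicate issue --- whether uniform convergence on the compact window $[0,1/2]$ suffices to run Musielak's embedding criterion --- is resolved by the non-degeneracy of $G$ near zero, itself a consequence of the $2$-concavity (equivalently, doubling) hypothesis; once that is in hand the two-sided Musielak estimate collapses to a trivial $\ell_1$-perturbation, and the convexity/concavity bookkeeping of the first paragraph is routine.
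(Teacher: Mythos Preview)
Your proof is correct and follows essentially the same route as the paper: verify that $L_F$ has cotype $2$ and contains $L_2$, invoke the dichotomy (you use Theorem~\ref{thm:Cotype:Absolutely} directly where the paper cites its subsymmetric wrapper Theorem~\ref{thm:DirecSumSS}), then identify the disjointly supported block sequence via Lemma~\ref{lem:MSMO} and Theorem~\ref{thm:MOEmbedding} after extracting a uniformly convergent subsequence of the $F_{\bphi_n}$ in the compact set $C_{F,0}$. You actually supply more detail than the paper in two places---the explicit doubling-based argument that $G$ does not vanish near zero, and the verification that Musielak's criterion applies on the window $[0,1/2]$---both of which the paper leaves implicit.
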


\begin{theorem}\label{thm:BSOC}
Let $F$ be a finite Orlicz function. Suppose that either 
\begin{itemize}[leftmargin=*]
\item $F$ is doubling near infinity and  the mapping $t\mapsto F(t) t^{-2}$ is essentially increasing on $[1,\infty)$, or
\item $F^*$ is doubling near infinity and  the mapping  $t\mapsto F(t) t^{-2}$ is essentially decreasing on $[1,\infty)$.
\end{itemize}
Then, any complemented subsymmetric basic sequence $\Psi$ in $L_F$ is equivalent to the unit vector system of $\ell_2$ or $\ell_G$ for some $G\in C_F$ such that $G^*$ is equivalent to a function in $C_{F^*}$.
\end{theorem}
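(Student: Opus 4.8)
The plan is to reduce both alternatives in the hypothesis to a common normal form and then translate it into the dictionary of Orlicz spaces. We may assume $F$ is normalized. In the first case, Theorem~\ref{thm:OrliczConvexConcave} gives that $L_F$ is lattice $2$-convex, hence $L_F\subseteq L_2$, and, being doubling near infinity, it has nontrivial concavity, hence Rademacher type $2$; moreover $\rho_F$ is absolutely continuous because $F$ is doubling near infinity. In the second case, Theorem~\ref{thm:OrliczConvexConcave} gives that $L_F$ is lattice $2$-concave, hence it has Rademacher cotype $2$ and $L_2\subseteq L_F$; then $\rho_F$ is absolutely continuous by Theorem~\ref{thm:CotypeAC} (so $F$ is doubling near infinity), and $\rho_{F^*}=(\rho_F)^*$ is absolutely continuous since $F^*$ is assumed doubling near infinity. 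In either case $\rho_F$ is absolutely continuous, so $(L_F)^*=L_{F^*}$ acting by the pairing $\langle g,f\rangle=\int gf$.

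Let $\Psi=(\bpsi_n)_{n=1}^\infty$ be the given complemented subsymmetric basic sequence, which is semi-normalized, with projecting functionals $\Psi^*=(\bpsi_n^*)_{n=1}^\infty$. In the first case I would apply Theorem~\ref{thm:Type:Absolutely}\ref{it:Type:AbsolutelyA} when $\inf_n\norm{\bpsi_n}_2>0$ and Corollary~\ref{thm:Type} when $\inf_n\norm{\bpsi_n}_2=0$; in the second case Theorem~\ref{thm:Cotype:Absolutely} according to whether $\inf_n\norm{\bpsi_n^*}_2$ is positive or zero, using the absolute continuity of $\rho_{F^*}$ to obtain the stronger conclusion in part~\ref{it:Cotype:AbsolutelyB}. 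If the relevant infimum is positive, $\Psi$ is equivalent to the unit vector system of $\ell_2$ and we are done. Otherwise $\Psi$ has a subsequence congruent to a well-complemented basic sequence $\Phi=(\bphi_n)_{n=1}^\infty$ admitting good projecting functionals $\Phi^*=(\bphi_n^*)_{n=1}^\infty$ such that both $\Phi$ and $\Phi^*$ consist of simple functions escaping to infinity. Since $\Psi$ is subsymmetric it is equivalent to each of its subsequences, hence to $\Phi$; and, since $\Phi$ and $\Phi^*$ consist of simple functions supported on sets of finite measure, after normalizing we may view $\Phi$ as a normalized disjointly supported sequence in $H_F$ and $\Phi^*$ as one in $H_{F^*}$ (here $\Phi^*$ is semi-normalized: $1=\langle\bphi_n^*,\bphi_n\rangle$ bounds $\norm{\bphi_n^*}_{F^*}$ below, and the boundedness of $f\mapsto\sum_n\langle\bphi_n^*,f\rangle\bphi_n$ bounds it above). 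Passing to a further subsequence disturbs none of this, as $\Psi$ is subsymmetric and the well-complemented structure restricts.

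Now I would identify both sequences with Orlicz bases. By Lemma~\ref{lem:MSMO}\ref{it:MSMOc}, $\Phi$ is isometrically equivalent to the unit vector system of the Musielak--Orlicz space $\ell_\MF$ with $\MF=(F_{\bphi_n})_{n=1}^\infty$; by Lemma~\ref{lem:MSMO}\ref{it:MSMOb}, each $F_{\bphi_n}$ is a convex combination of flows $F_s$, and, since $\Phi$ escapes to infinity, the parameters $s$ exceed any fixed bound for all large $n$, so $F_{\bphi_n}\in C_{F,b}$ for every $b\ge 0$ and all large $n$. By compactness of $C_{F,0}$, I pass to a subsequence along which $(F_{\bphi_n})$ converges uniformly on $[0,1/2]$ to some $G\in\bigcap_{b\ge 0}C_{F,b}=C_F$, and, shrinking it further, with $\sum_n\norm{F_{\bphi_n}-G}_\infty<\infty$; Theorem~\ref{thm:MOEmbedding} then gives $\ell_\MF=\ell_G$ with equivalent norms, so $\Phi$ is equivalent to the unit vector system of $\ell_G$. (Since $F$ is doubling near infinity, $G$ is doubling near zero, so $\ell_G=h_G$.) Running the same computation on $\Phi^*$ inside $H_{F^*}$, along the same subsequence, produces $H\in C_{F^*}$ with $\Phi^*$ equivalent to the unit vector system of $h_H$.

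Finally I match the two representations of $\Phi^*$. Being a sequence of projecting functionals for $\Phi$, $\Phi^*$ is equivalent to the coordinate functionals of $\Phi$; and, since $\Phi$ is equivalent to the unit vector system of $h_G$, those coordinate functionals, read through the Köthe duality $(h_G)^*=\ell_{G^*}$, are exactly the unit vectors of $\ell_{G^*}$, hence equivalent to the unit vector system of $h_{G^*}$. Comparing with the previous paragraph, the unit vector systems of $h_{G^*}$ and $h_H$ are equivalent, and therefore $G^*$ is equivalent near zero to $H\in C_{F^*}$. Together with the fact that $\Psi$ is equivalent to the unit vector system of $\ell_G$ with $G\in C_F$, this proves the theorem. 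The step I expect to be the main obstacle is precisely this last coupling: it relies on extracting the good projecting functionals $\Phi^*$ in the sharp form ``simple functions escaping to infinity'' (so that Lemma~\ref{lem:MSMO} applies to them and their flows land in $C_{F^*}$), and on the bookkeeping that keeps $\Phi$ and $\Phi^*$ inside their equivalence classes while passing to a common subsequence; once that is arranged, the reductions and the two invocations of Theorem~\ref{thm:MOEmbedding} are routine.
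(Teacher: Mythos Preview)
Your proposal is correct and follows essentially the same route as the paper. The only cosmetic difference is that the paper invokes Theorem~\ref{thm:DirecSumSS} (with $|J|=1$) to obtain the well-complemented sequence $\Phi$ together with good projecting functionals $\Phi^*$ consisting of simple functions escaping to infinity, whereas you unpack this by calling Corollary~\ref{thm:Type} and Theorem~\ref{thm:Cotype:Absolutely} directly; the subsequent passage to $G\in C_F$ and $H\in C_{F^*}$ via compactness and Theorem~\ref{thm:MOEmbedding}, and the identification $\ell_H=\ell_{G^*}$ through duality, are identical to the paper's argument.
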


\begin{proof}[Proof of Theorems~\ref{thm:BSOA}, \ref{thm:BSOB} and \ref{thm:BSOC}]
Suppose that $\Psi$ is not equivalent to the unit vector system of $\ell_2$. Then, combining Theorem~\ref{thm:OrliczConvexConcave}, Theorem~\ref{thm:RadConvBis} and Corollary~\ref{cor:Type:Absolutely} or Theorem~\ref{thm:DirecSumSS}, gives that $\Psi$ is equivalent to a normalized disjointly supported  sequence $\Phi=(\bpsi_n)_{n=1}^\infty$ in $L_F$ consisting of simple functions escaping to $\infty$. Moreover, under the assumptions of Theorem~\ref{thm:BSOC}, the dual basis $\Psi^*$ or $\Psi$ is equivalent to a normalized disjointly supported sequence $\Phi^*=(\bphi_n^*)_{n=1}^\infty$ in $L_{F^*}$ consisting of simple functions escaping to $\infty$. By Lemma~\ref{lem:MSMO}\ref{it:MSMOc}, there is a sequence $(b_n)_{n=1}^\infty$ in $(0,\infty)$ with $\lim_n b_n=\infty$ such that  $F_{\bphi_n}\in C_{F,b_n}$ and $F^*_{\bphi_n^*}\in C_{F^*,b_n}$ for all $n\in\NN$. 

Pick an arbitrary sequence $(a_n)_{n=1}^\infty$ in $(0,\infty)$ with $\sum_{n=1}^\infty a_n<\infty$. Since $C_{F,0}$ is compact we can assume, passing to a subsequence, that there is $G\in C_{F,0}$ such that 
\[
\norm{F_{\bphi_n}-G}_\infty \le a_n, \quad n\in\NN.
\]
In the case we are addressing proving Theorem~\ref{thm:BSOC} we can also assume, passing to a further subsequence, that  there is $H\in C_{F^*,0}$ such that 
\[
\norm{F^*_{\bphi^*_n}-H}_\infty \le a_n, \quad n\in\NN.
\]
Combining Lemma~\ref{lem:MSMO}\ref{it:MSMOb} with Theorem~\ref{thm:MOEmbedding} gives that $\Phi$ is equivalent to the unit vector system of $\ell_G$, and $\Phi^*$ is equivalent to the unit vector system of $\ell_H$. Since $\Phi^*$ is also equivalent to the unit vector system of  $\ell_{G^*}$, $\ell_H=\ell_{G^*}$ up to an equivalent norm. Therefore, $H$ is equivalent to $G^*$. Since $G\in C_F$ and $H\in C_{F^*}$, we are done.
\end{proof}

Concerning the accuracy of Theorems~\ref{thm:BSOA}, \ref{thm:BSOB} and \ref{thm:BSOC}, we point out that the Rademacher functions, regarded as a sequence in a rearrangement invariant function space $\LL_\rho$, are equivalent to the unit vector of $\ell_2$ if and only $L_{M_2}\subseteq \LL_\rho$, where, for each $a>0$, $M_a$ is the normalized Orlicz function given by
\[
M_a(t)=\frac{e^{t^a}-1}{e-1}, \quad t\ge 0.
\]
Moreover, the Rademacher functions are complemented in $\LL_\rho$ if and only if  $L_{M_2}\subseteq \LL_\rho \subseteq L_{M_2^*}$ (see \cites{RodinSemyonov1975,RodinSemyonov1979}). We also point out that the assumption of complementability in Theorems~ \ref{thm:BSOB} and \ref{thm:BSOC} is necessary. In fact, if $1< s < 2$ and
\[
\int_0^1 F(x^{1/s})\, dx<\infty,
\]
then $L_F$ has a basic sequence equivalent to the unit vector system of $\ell_s$ (see \cite{LinTza1979}*{\S 8}). As for sequences other than the canonical basis of the  Hilbert space, we next prove that if $E_F=C_F$, then these theorems are sharp.

\begin{lemma}\label{lem:averaging}
Let $(A_n)_{n=1}^\infty$ be a pairwise disjoint sequence consisting of Borel subsets of  $I$, where $I$ is either $[0,1]$ or $[0,\infty)$. Let $F$ be a normalized finite Orlicz function. Set
\[
s_n=F^{-1} \left( \frac{1}{\abs{A_n}} \right), \quad n\in\NN.
\]
Then, $\Phi=(s_n \chi_{A_n})_{n=1}^\infty$ is a  well-complemented  basic sequence with  good projecting functionals  $(\chi_{A_n} F(s_n)/s_n)_{n=1}^\infty$. Moreover, $\Phi$ is isometrically equivalent to the unit vector system of $\ell_{\MF}$, where $\MF=(F_{s_n})_{n=1}^\infty$.
\end{lemma}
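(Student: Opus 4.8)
The plan is to reduce everything to two facts already available: the boundedness of rearrangement-invariant averaging projections (Theorem~\ref{thm:Averaging}), and the identification of normalized disjointly supported sequences in $H_F$ with unit vector systems of Musielak--Orlicz spaces (Lemma~\ref{lem:MSMO}\ref{it:MSMOc}). Throughout I take for granted the implicit non-degeneracy hypothesis $0<\abs{A_n}<\infty$; then, since a non-constant finite Orlicz function is continuous and unbounded, each $s_n$ is well defined in $(0,\infty)$ and $F(s_n)=\abs{A_n}^{-1}$.

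First I would record the elementary computations. Since $m_F(s_n\chi_{A_n})=F(s_n)\abs{A_n}=1$, each $\bphi_n:=s_n\chi_{A_n}$ has $\rho_F$-norm one; and $m_F(t\,\bphi_n)=F(ts_n)\abs{A_n}<\infty$ for every $t\ge 0$ because $F$ is finite, so $\bphi_n\in H_F$. With $\bphi_n^*:=\chi_{A_n}F(s_n)/s_n$, which is a bounded simple function supported on a set of finite measure and hence lies in $L_{F^*}\subseteq (L_F)^*$, one checks at once that $\langle\bphi_m^*,\bphi_n\rangle=\delta_{mn}F(s_n)\abs{A_n}=\delta_{mn}$, so $(\bphi_n,\bphi_n^*)_{n=1}^\infty$ is a biorthogonal system. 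Both $\Phi=(\bphi_n)_{n=1}^\infty$ and $(\bphi_n^*)_{n=1}^\infty$ are pairwise disjointly supported; in particular $\Phi$ is a $1$-unconditional basic sequence and $[\Phi]=[\chi_{A_n}\colon n\in\NN]$.

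Next I would identify the associated projection. For $f\in L_F$,
\[
\sum_{n=1}^\infty\langle\bphi_n^*,f\rangle\,\bphi_n
=\sum_{n=1}^\infty F(s_n)\,\chi_{A_n}\int_{A_n}f\,dx
=\sum_{n=1}^\infty\frac{\chi_{A_n}}{\abs{A_n}}\int_{A_n}f\,dx,
\]
which is exactly the operator of Theorem~\ref{thm:Averaging}. Since $\rho_F$ is rearrangement invariant and $I$ carries Lebesgue measure, that theorem yields that this is a bounded linear projection onto $[\chi_{A_n}\colon n\in\NN]=[\Phi]$; restricting the sum to a finite index set $A$ only composes it with the norm-one band projection $f\mapsto f\chi_{\bigcup_{n\in A}A_n}$, so $(\bphi_n^*)_{n=1}^\infty$ are projecting functionals for $\Phi$ in the sense used in the paper. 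As they are pairwise disjointly supported, $\Phi$ is a well-complemented basic sequence with good projecting functionals $(\chi_{A_n}F(s_n)/s_n)_{n=1}^\infty$.

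Finally, for the \emph{moreover} part I would apply Lemma~\ref{lem:MSMO}\ref{it:MSMOc} to the normalized disjointly supported sequence $\Phi\subseteq H_F$, obtaining that $\Phi$ is isometrically equivalent to the unit vector system of $\ell_\MG$ with $\MG=(F_{\bphi_n})_{n=1}^\infty$. It then remains only to note that $F_{\bphi_n}(t)=m_F(ts_n\chi_{A_n})=F(ts_n)\abs{A_n}=F(ts_n)/F(s_n)=F_{s_n}(t)$ for all $t\ge 0$, so $\MG=\MF$. I do not expect any genuine obstacle here; the single point that needs attention is recognizing that the candidate projection is precisely the rearrangement-invariant averaging projection (which is where the r.i.\ property of $\rho_F$ enters), together with the implicit assumption that each $A_n$ has positive finite measure.
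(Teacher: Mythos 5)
Your proposal is correct and takes essentially the same route as the paper: check $m_F(s_n\chi_{A_n})=1$ so that $\Phi$ is normalized, recognize the biorthogonal projection as the averaging projection of Theorem~\ref{thm:Averaging}, and conclude the isometric identification via Lemma~\ref{lem:MSMO}\ref{it:MSMOc}. The only cosmetic difference is that you verify $F_{s_n\chi_{A_n}}=F_{s_n}$ by direct computation, where the paper cites Lemma~\ref{lem:MSMO}\ref{it:MSMOb}; your extra details (biorthogonality, the finite-subset projections, the implicit assumption $0<\abs{A_n}<\infty$) are all accurate.
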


\begin{proof}
Since $m_F(s_n \chi_{A_n})=1$ for all $n\in\NN$, $\Phi$ is normalized. Then, by Lemma~\ref{lem:MSMO}\ref{it:MSMOb}, $F_{s_n \chi_{A_n}}=F_{s_n}$ for all $n\in\NN$. We close the proof by combining Theorem~\ref{thm:Averaging} with Lemma~\ref{lem:MSMO}\ref{it:MSMOc}.
\end{proof}

\begin{theorem}
Let $F$ be a finite Orlicz function with $\lim_{t\to \infty} F(t)/t=\infty$. For any $G\in E_F$ there is a  well-complemented  basic sequence $\Psi$ in $L_F$  that is equivalent to the unit vector system of $\ell_G$. Moreover, there are good projecting functionals $\Phi^*$ for $\Phi$ such that  both $\Phi$ and $\Phi^*$ consist of simple functions escaping to infinity.
\end{theorem}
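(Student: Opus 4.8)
The plan is to produce $\Phi$ via the averaging construction of Lemma~\ref{lem:averaging}, applied to a carefully chosen sequence of dilation parameters coming from flows of $F$ that converge to $G$. Since neither $E_F$ nor the isomorphic type of $L_F$ is affected if we rescale the argument and the values of $F$ by positive constants, we may assume $F$ is normalized. Because $G\in E_F=\bigcap_{b\ge 0}E_{F,b}$, for every $b\ge 0$ and every $\varepsilon>0$ there is $s>b$ with $\norm{F_s-G}_\infty<\varepsilon$ (the sup-norm being taken on $[0,1/2]$). Using also that $F(s)\to\infty$ and $F(s)/s\to\infty$ as $s\to\infty$ (the latter being the standing hypothesis, and the monotonicity of $s\mapsto F(s)/s$ following from the convexity of $F$), I would recursively pick an increasing sequence $(s_n)_{n=1}^\infty$ with $\lim_n s_n=\infty$ such that, for every $n$,
\[
\norm{F_{s_n}-G}_\infty\le 2^{-n},\qquad F(s_n)\ge 2^n,\qquad \frac{F(s_n)}{s_n}\ge n .
\]

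Next I would choose pairwise disjoint Borel sets $(A_n)_{n=1}^\infty$ in $[0,1)$ with $\abs{A_n}=1/F(s_n)$, which is possible since $\sum_n 1/F(s_n)\le\sum_n 2^{-n}=1$. Then $s_n=F^{-1}(1/\abs{A_n})$, so Lemma~\ref{lem:averaging} applies and shows that $\Phi=(s_n\chi_{A_n})_{n=1}^\infty$ is a well-complemented basic sequence in $L_F$ with good projecting functionals $\Phi^*=(\chi_{A_n}F(s_n)/s_n)_{n=1}^\infty$, and that $\Phi$ is isometrically equivalent to the unit vector system of the Musielak--Orlicz space $\ell_\MF$, where $\MF=(F_{s_n})_{n=1}^\infty$. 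Both $\Phi$ and $\Phi^*$ consist of simple functions, and they escape to infinity because $\essinf\{\abs{s_n\chi_{A_n}(\omega)}\colon\omega\in\supp(s_n\chi_{A_n})\}=s_n\to\infty$, and the corresponding infimum for $\Phi^*$ equals $F(s_n)/s_n\to\infty$ by the standing hypothesis.

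It then remains to identify $\ell_\MF$ with $\ell_G$. From the summable estimate $\norm{F_{s_n}-G}_\infty\le 2^{-n}$, Musielak's Theorem~\ref{thm:MOEmbedding}, applied in both directions exactly as in the proof of Theorems~\ref{thm:BSOA}, \ref{thm:BSOB} and \ref{thm:BSOC}, yields $\ell_\MF=\ell_G$ with equivalent norms, hence the unit vector systems of these two spaces are equivalent. Setting $\Psi:=\Phi$, we conclude that $\Psi$ is a well-complemented basic sequence in $L_F$ equivalent to the unit vector system of $\ell_G$, with the stated good projecting functionals $\Phi^*$.

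The step that needs the most care — and essentially the only genuine obstacle — is this Musielak comparison, because a priori $G$ may vanish on $[0,1/2]$ (so that $\ell_G=\ell_\infty$); in that case, however, $\norm{F_{s_n}}_{\infty,[0,1/2]}=\norm{F_{s_n}-G}_{\infty,[0,1/2]}\le 2^{-n}$, and both inclusions in Theorem~\ref{thm:MOEmbedding} become available with $C=b=2$ and the summable sequence $(2^{-n})_n$, using that $F_{s_n}(2t)\ge 1$ for $t\ge 1/2$. Everything else is routine bookkeeping, and no new idea beyond Lemma~\ref{lem:averaging} and the flow machinery of Section~\ref{sec:Orlicz} is required.
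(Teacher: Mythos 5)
Your proof is correct and follows essentially the same route as the paper: choose $s_n\to\infty$ with $\norm{F_{s_n}-G}_\infty$ summable and $\sum_n 1/F(s_n)\le 1$, take disjoint sets $A_n$ with $\abs{A_n}=1/F(s_n)$, and combine Lemma~\ref{lem:averaging} with Theorem~\ref{thm:MOEmbedding}. Your added details (normalization of $F$, the role of $\lim_{t\to\infty}F(t)/t=\infty$ in making $\Phi^*$ escape to infinity, and the degenerate case $G\equiv 0$ on $[0,1/2]$) are welcome refinements of the paper's terser argument.
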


\begin{proof}
Pick an arbitrary  positive sequence $(a_n)_{n=1}^\infty\in\ell_1$. Let $(s_n)_{n=1}^\infty$ in $(0,\infty)$ be such that $\sum_{n=1}^\infty 1/F(s_n)\le 1$ and 
\[
\norm{G-F_{s_n}}_\infty\le a_n, \quad n\in\NN.
\]
Let $(A_n)_{n=1}^\infty$ be a pairwise disjoint sequence consisting Borel subsets of $[0,1]$ with $\abs{A_n}=1/F(s_n)$ for all $n\in\NN$. Combining 
Lemma~\ref{lem:averaging} and Theorem~\ref{thm:MOEmbedding} gives the desired result.
\end{proof}

We close our study of the accuracy of  Theorems~\ref{thm:BSOA}, \ref{thm:BSOB} and \ref{thm:BSOC} with a criterion to ensure that $E_F=C_F$.

\begin{proposition}
Let $F$ be an Orlicz function. Suppose that for all $t\ge 0$ there exists
\[
G(t):=\lim_{s\to \infty} F_s(t)=\lim_{s\to \infty} \frac{F(st)}{F(s)}<\infty.
\]
Then, $G$ is an Orlicz function, and $E_F=C_F=\{G\}$.
\end{proposition}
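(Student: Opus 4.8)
The plan is to prove that the flows $F_s$ converge, in the norm of $\Cont([0,1/2])$, to $G$ as $s\to\infty$; once this is available, the identity $E_F=C_F=\{G\}$ falls out of the definitions. I would begin with the (routine) verification that $G$ is an Orlicz function: every flow $F_s$ is convex, non-decreasing, and satisfies $F_s(0)=0$ and $F_s(1)=1$, so the pointwise limit $G$ inherits these properties, and since $G$ is finite everywhere by hypothesis it is a normalized finite Orlicz function. From here on we regard all the $F_s$ and $G$ as elements of $\Cont([0,1/2])$.

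The heart of the argument is to upgrade the pointwise convergence $F_s(t)\to G(t)$, $t\in[0,1/2]$, to uniform convergence, i.e.\ $\norm{F_s-G}_\infty\to 0$ as $s\to\infty$. I would use that $C_{F,0}$ is compact in $\Cont([0,1/2])$ (as noted above), so that $\{F_s\colon s>0\}$ is relatively compact. If uniform convergence failed, there would be $\varepsilon_0>0$ and a sequence $s_n\to\infty$ with $\norm{F_{s_n}-G}_\infty\ge\varepsilon_0$ for all $n$; passing to a subsequence, $(F_{s_n})_n$ would converge uniformly to some $H\in\Cont([0,1/2])$, and then $H$ would coincide with the pointwise limit $G$ (uniform convergence forces pointwise convergence, and the pointwise limit is unique), contradicting $\norm{F_{s_n}-G}_\infty\ge\varepsilon_0$. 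This equicontinuity-to-uniform-convergence step is where essentially all the content lies; the rest is bookkeeping.

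Finally I would deduce the two inclusions. Since $\norm{F_s-G}_\infty\to 0$, for every $b\ge 0$ the function $G$ is a limit of elements of $\{F_s\colon s>b\}$, hence $G\in E_{F,b}$; thus $G\in\bigcap_{b\ge 0}E_{F,b}=E_F\subseteq C_F$. Conversely, given $\varepsilon>0$, choose $b\ge 0$ with $\norm{F_s-G}_\infty\le\varepsilon$ for all $s>b$. The closed ball $\overline{B}(G,\varepsilon)$ of radius $\varepsilon$ about $G$ in $\Cont([0,1/2])$ is convex and contains $\{F_s\colon s>b\}$, hence it contains the closed convex hull $C_{F,b}$ of that set, so $C_F\subseteq C_{F,b}\subseteq\overline{B}(G,\varepsilon)$. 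Letting $\varepsilon\to 0$ gives $C_F\subseteq\{G\}$, and combining with $G\in E_F$ we conclude $E_F=C_F=\{G\}$.
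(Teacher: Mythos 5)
Your proof is correct, but it proceeds by a different intermediate step than the paper. You first upgrade the pointwise convergence $F_s(t)\to G(t)$ to uniform convergence $\norm{F_s-G}_\infty\to 0$ on $[0,1/2]$, via compactness of $C_{F,0}$ and a subsequence argument, and then both conclusions fall out: $G\in E_{F,b}$ for every $b$, and $C_{F,b}$ is trapped in the closed ball $\overline{B}(G,\varepsilon)$ because that ball is closed and convex and contains the generators $\{F_s\colon s>b\}$. The paper never proves uniform convergence: it fixes an arbitrary $H\in C_F$ and a single point $t$, chooses $b$ so that $\abs{F_s(t)-G(t)}<\epsilon/2$ for $s>b$ (pointwise convergence only), approximates $H$ uniformly within $\epsilon/2$ by a convex combination of flows with parameters above $b$, and concludes $\abs{H(t)-G(t)}<\epsilon$; it then closes the argument by invoking the previously recorded fact that $E_F\neq\emptyset$, so that $C_F\subseteq\{G\}$ together with $\emptyset\neq E_F\subseteq C_F$ gives $E_F=C_F=\{G\}$. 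The two arguments rest on the same underlying compactness of $C_{F,0}$ (you use it for the subsequence extraction, the paper uses it through the nonemptiness of $E_F$), and your ball-containment step is the same convexity estimate as the paper's convex-combination bound, just packaged geometrically. What your route buys is a slightly stronger statement (uniform convergence of the flows) and a self-contained proof that $G$ itself lies in $E_F$, rather than obtaining this indirectly; what the paper's route buys is brevity, needing only pointwise convergence at one point at a time.
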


\begin{proof}
Since $E_F$  is nonempty, it suffices to prove that if $H\in C_F$, then $G(t)=H(t)$ for all $t\in[0,1/2]$. Fix $t\in[0,1/2]$ and $\epsilon>0$. There is $b\in(0,\infty)$ such that
\[
\abs{F_s(t)-G(t)}<\frac{\epsilon}{2}, \quad s>b.
\]
Now, since $H\in C_{F,b}$ there are finite families $(\lambda_j)_{j\in J}$ in $[0,\infty)$ and $(s_j)_{j\in J}$ in $(b,\infty)$ such that
\[
\sum_{j\in J} \lambda_j=1 \quad \mbox{ and } \quad \sup_{0\le s \le 1/2} \abs{ H(s)- \sum_{j\in J} \lambda_j F_{s_j}(s)}<\frac{\epsilon}{2}.
\]
Consequently,  $\abs{H(t)-G(t)}<\epsilon$.
\end{proof}

\begin{example}\label{ex:Fpa}
Let $1\le p<\infty$ and $a\in\RR\setminus\{0\}$. If $p=1$, we assume that $a>0$. It is easily checked that there are constants $c=c(p,a)$ and $m=m(a,p)$ such that the function
\[
F_{p,a}(t)=\begin{cases}  t^p (\log t)^a & \mbox{ if } t\ge c, \\   m t & \mbox{ if } 0<t<c,   \end{cases}
\]
is an Orlicz function. Notice that
\[
D_{p,a}:=F'_{p,a}\approx t^{p-1} (\log t)^a
\]
near infinity. If $p>1$ and we set
\[
q=p'=\frac{p}{p-1}, \quad b =-a(p-1),
\]
then $D_{p,a}( D_{q,b}(t)) \approx 1$ near infinity. Consequently, $L_{F_{p,a}}$ and $L_{F_{q,b}}$ are dual spaces. As the case $p=1$ is concerned, since
$(D_{1,a} (M_{1/a}'(t))\approx 1$ near infinity, the dual space of $L_{F_{1,a}}=L \log^a L$ is $L_{M_{1/a}}$.

If $a>0$, then the function $t\mapsto t^{-p}  F_{p,a}(t)$ is essentially increasing near infinity, while the function $t\mapsto t^{-q}  F_{p,a}(t)$ is essentially decreasing near infinity for all $q>p$. Consequently,  $L_{F_{p,a}}$ is lattice $p$-convex and lattice $q$-concave for all $p<q$.  In turn, if $a<0$,  then the function $t\mapsto t^{-p}  F_{p,a}(t)$ is essentially decreasing near infinity, and the function $t\mapsto t^{-q}  F_{p,a}(t)$ is essentially increasing near infinity for all $1\le q<p$. Therefore, $L_{F_{p,a}}$ is lattice $p$-concave and lattice $q$-convex for all $1\le q<p$. Since
\[
\lim_{s \to \infty} \frac{F_{p,a}(st)}{F_{p,a}(s)}=t^p, \quad t\ge 0,
\]
then $E_F=C_F=\{t\mapsto t^p\}$. With the information about the basic sequence structure of  $L_{F_{p,a}}$ we obtain is the following.
\begin{itemize}[leftmargin=*]
\item If $p>2$,  then $L_{F_{p,a}}$ has, up to equivalence, two different subsymmetric basic sequences. Namely,  the unit vector systems of $\ell_2$ and $\ell_p$.

\item If $p=2$ and $a>0$, then $L_{F_{p,a}}$ has, up to equivalence, a unique subsymmetric basic sequence. Namely,  the unit vector system of $\ell_2$.

\item If $p=2$ and $a<0$, then $L_{F_{p,a}}$ has, up to equivalence, a unique complemented subsymmetric basic sequence. Namely,  the unit vector system of $\ell_2$. We do not know whether  $L_{F_{p,a}}$ has subsymmetric basic sequences other than  the unit vector system of $\ell_2$.

\item If $1<p<2$ or $p=1$ and $a\ge 1/2$,  then $L_{F_{p,a}}$ has, up to equivalence, two different complemented subsymmetric basic sequences. Namely,  the unit vector systems of $\ell_2$ and $\ell_p$. Moreover, $\ell_s$, $p<s<2$, is isomorphic to a  (non-complemented) subspace of  $L_{F_{p,a}}$. If $p=1$ and $0<a<1/2$ these results about the subsymmetric basic sequence structure of $L_{F_{p,a}}$ still hold, with the exception that  it seems to be unknown whether $\ell_2$ is isomorphic to a complemented subspace $L_{F_{1,a}}$, $0<a<1/2$.
\end{itemize}

\end{example}

We conclude with an application of Example~\ref{ex:Fpa} to the isomorphic theory of Banach spaces. It is known that, given $1\le p<\infty$, all separable $L_p$-spaces over non purely atomic measure spaces are isomorphic (see, e.g., \cite{AlbiacKalton2016}*{Chapter 6}). As Orlicz spaces are concerned, the situation is quite different.

\begin{theorem}
Let  $1\le p<\infty$ and $a\in\RR^*$. Suppose that if $p=1$ then $a>0$. Then, the Banach spaces $L_{F_{p,a}}$ and $L_{F_{p,a}}([0,\infty))$ are not isomorphic.
\end{theorem}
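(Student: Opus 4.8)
The plan is to distinguish the two spaces through the structure of their (complemented) subsymmetric basic sequences, taking Example~\ref{ex:Fpa} as the description of that structure for $L_{F_{p,a}}=L_{F_{p,a}}([0,1))$, and producing an ``extra'' subsymmetric basic sequence on the infinite-measure side. Concretely, the first step is to exhibit inside $L_{F_{p,a}}([0,\infty))$ a well-complemented — hence subsymmetric — basic sequence isometrically equivalent to the unit vector system of $\ell_1$. Since $[0,\infty)$ has infinite measure I may choose pairwise disjoint Borel sets $A_n\subseteq[0,\infty)$ with $\abs{A_n}\to\infty$, and set $s_n=F_{p,a}^{-1}(1/\abs{A_n})$, so that $s_n\to0$. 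Because $F_{p,a}$ is linear on a neighbourhood of the origin, once $s_n$ is small the flow $t\mapsto F_{p,a}(s_nt)/F_{p,a}(s_n)$ is just the identity on $[0,1]$; hence by Lemma~\ref{lem:averaging} the normalized disjointly supported sequence $(s_n\chi_{A_n})_n$ is a well-complemented basic sequence isometrically equivalent to the unit vector system of $\ell_{\MF}$ with $\MF=(F_{p,a})_{s_n})_n$, and since the terms of $\MF$ are eventually the identity this means $\ell_{\MF}=\ell_1$. So $\ell_1$ occurs as a complemented subsymmetric basic sequence of $L_{F_{p,a}}([0,\infty))$.

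The second step disposes of the case $p>1$. From the convexity/concavity computations carried out in Example~\ref{ex:Fpa} together with Theorem~\ref{thm:OrliczConvexConcave}, the space $L_{F_{p,a}}$ is lattice $q$-convex for some $q\in(1,\infty)$ — one may take $q\in(1,p)$ when $p<2$ and $q=2$ when $p\ge2$ — so it has nontrivial convexity; and since $F_{p,a}$ is doubling near infinity it has nontrivial concavity (last assertion of Theorem~\ref{thm:OrliczConvexConcave}). By the Remark following Theorem~\ref{thm:RadConv}, $L_{F_{p,a}}$ then has nontrivial Rademacher type, and a Banach space with nontrivial type contains no isomorphic copy of $\ell_1$ (the type inequality applied to uniformly isomorphic $\ell_1^n$'s is violated). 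As $L_{F_{p,a}}([0,\infty))$ does contain a copy of $\ell_1$, the two spaces cannot be isomorphic.

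The remaining case $p=1$ (with $a>0$) is the one I expect to be the real obstacle. Here $L_{F_{1,a}}=L\log^a L$ is only trivially convex ($1$-convex and no better, since $t\mapsto F_{1,a}(t)t^{-q}$ is essentially \emph{decreasing} on $[1,\infty)$ for every $q>1$), it does contain $\ell_1$ as a complemented subsymmetric basic sequence, and in fact it shares with $L_{F_{1,a}}([0,\infty))$ the same list of subsymmetric basic sequences (namely $\ell_1$ and $\ell_2$, using that $E_{F_{1,a}}=C_{F_{1,a}}=\{\mathrm{id}\}$ and that every $G$ in the closed convex hull of the flows still has $G'(0^+)>0$); so neither the type argument nor a count of subsymmetric sequences separates the spaces. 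To handle $p=1$ one must instead find an isomorphic invariant sensitive to the finite versus infinite nature of the underlying measure. The route I would pursue is to produce inside $L_{F_{1,a}}([0,\infty))$ a complemented subspace coming from the decomposition of $[0,\infty)$ into intervals of unbounded length — where the small-value régime of $F_{1,a}$ forces the blocks to be coupled like an $\ell_1$-sum — and then to rule out such a complemented subspace in $L_{F_{1,a}}([0,1))$ by a rigidity/primarity property of $L\log^a L$ on a finite interval. Pinning down the correct complemented subspace and establishing that rigidity is, I believe, the technical heart of the $p=1$ case.
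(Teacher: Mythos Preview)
For $p>1$ your argument is correct and is essentially the paper's. The paper applies Lemma~\ref{lem:averaging} with $A_n=[n-1,n)$ (so $s_n=1$) to obtain the unit vector system of $\ell_{F_{p,a}}$ as a well-complemented basic sequence of $L_{F_{p,a}}([0,\infty))$; since $F_{p,a}$ is linear near $0$, that sequence space is $\ell_1$. You use $|A_n|\to\infty$ instead, arriving at the same $\ell_1$, and you exclude $\ell_1$ from $L_{F_{p,a}}$ via nontrivial type where the paper simply quotes the list of complemented subsymmetric basic sequences from Example~\ref{ex:Fpa}. Either route works.

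You are right that this strategy collapses when $p=1$: by Example~\ref{ex:Fpa} the unit vector system of $\ell_1$ \emph{is} already a complemented subsymmetric basic sequence of $L_{F_{1,a}}$, so exhibiting a complemented $\ell_1$ on the infinite-measure side distinguishes nothing. Your proposed workaround for $p=1$ is only a sketch, so your proof is genuinely incomplete there. It should be said, though, that the paper's own proof consists of a single sentence which does nothing beyond producing the complemented $\ell_F$ in $L_F([0,\infty))$ and invoking Example~\ref{ex:Fpa}; since $\ell_{F_{1,a}}=\ell_1$, the paper offers no additional argument for $p=1$ and thus appears to share exactly the gap you diagnosed. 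In other words, you have not missed an idea that the paper supplies---the $p=1$ case is not actually handled by the paper's proof either.
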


\begin{proof}
In light of Example~\ref{ex:Fpa}, it suffices to show that $\ell_F$ is a complemented subspace of  $L_F([0,\infty))$ for every normalized Orlicz function $F$. To that end, we apply Lemma~\ref{lem:averaging}  with $A_n=[n-1,n)$ for all $n\in\NN$.
\end{proof}

\bibliography{BiblioBS}
\bibliographystyle{plain}
\end{document}